\newtheorem{theorem}{Theorem}[subsection]
\newtheorem{definition}[theorem]{Definition}
\newtheorem{lemma}[theorem]{Lemma}
\newtheorem{proposition}[theorem]{Proposition}
\theoremstyle{remark}
\newtheorem{remark}[theorem]{Remark}
\theoremstyle{definition}
\newtheorem{example}[theorem]{Example}
\numberwithin{equation}{subsection}
\title{Genus zero Gromov-Witten axioms via Kuranishi atlases}
\author{Robert Castellano}
\date{}
\begin{document}
\maketitle
\abstract{A Kuranishi atlas is a structure used to build a virtual fundamental class on moduli spaces of $J$-holomorphic curves. They were introduced by McDuff and Wehrheim to resolve some of the challenges in this field. This paper completes the construction of genus zero Gromov-Witten invariants using Kuranishi atlases and proves the Gromov-Witten axioms of Kontsevich and Manin. To do so, we introduce the notion of a transverse subatlas, a useful tool for working with Kuranishi atlases.}
\tableofcontents

\section{Introduction}
The theory of Kuranishi atlases was developed in \cite{mwtop, mwfund, mwiso} by McDuff and Wehrheim. In these papers, they develop the abstract framework for the theory of Kuranishi atlases and construct a virtual fundamental class for a space $X$ admitting a \textit{smooth} Kuranishi atlas (see Theorem \ref{theoremb}). The prototype of the space $X$ for which this theory was developed is a compactified moduli space of $J$-holomorphic curves. The technique used is that of finite dimensional reduction and is based on the work by Fukaya-Ono \cite{fo} and Fukaya-Oh-Ohta-Ono \cite{fooo}.  In \cite{notes} McDuff describes the construction of a Kuranishi atlas for the Gromov-Witten moduli space of closed genus zero curves. However, using the ``weak" gluing theorem of \cite{JHOL}, one does not obtain a smooth Kuranishi atlas, but rather a \textit{stratified smooth} Kuranishi atlas. In \cite{notes} McDuff describes how to adapt the theory of Kuranishi atlases to stratified smooth atlases and construct a virtual fundamental class in the case the space has virtual dimension zero. In this paper we extend these results to arbitrary virtual dimension and prove the axioms for Gromov-Witten invariants formulated by Kontsevich-Manin \cite{axioms}.

\subsection{Statement of main results}
In \cite{gluingme}, the author proves a stronger ``$C^1$ gluing theorem" and uses this to show that the genus zero Gromov-Witten moduli space admits a stronger structure than a stratified smooth structure called a \textit{$C^1$ stratified smooth} ($C^1$ SS) structure. This structure will be defined in Section \ref{c1ss}.

\begin{theorem}\label{admitthm}\cite[Theorem 1.1.1]{gluingme}
Let $(M^{2n},\omega,J)$ be a $2n$-dimensional symplectic manifold with tame almost complex structure $J$. Let $\overline{\mathcal{M}}_{0,k}(A,J)$ be the compact space of nodal $J$-holomorphic genus zero stable maps in class $A$ with $k$ marked points modulo reparametrization. Let $d=2n+2c_1(A)+2k-6$. Then $X:=\overline{\mathcal{M}}_{0,k}(A,J)$ has an oriented, $d$-dimensional, weak $C^1$ SS Kuranishi atlas $\mathcal{K}$.
\end{theorem}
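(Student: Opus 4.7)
The plan is to construct the Kuranishi atlas $\mathcal{K}$ explicitly by first building local Kuranishi charts at every point $[\mathbf{f}] \in \overline{\mathcal{M}}_{0,k}(A,J)$ and then patching these into a global atlas via the McDuff--Wehrheim sum construction.

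For a smooth (non-nodal) stable map $[f]$, I would first stabilize the domain by adding slicing marked points cutting a codimension-two hypersurface transverse to the image of $f$, which rigidifies the reparametrization action. Then I would choose a finite-dimensional obstruction space $E \subset \Omega^{0,1}(\Sigma, f^*TM)$ that surjects onto the cokernel of the linearization $D_f$ of the Cauchy--Riemann operator and consists of sections supported away from marked points and nodes. The local chart is $(U, E, s)$, where $U$ parametrizes nearby pairs (domain, map) whose $\bar\partial_J$-defect lies in $E$; the implicit function theorem makes $U$ into a smooth finite-dimensional manifold and $s$ into a smooth section. At a nodal configuration $[\mathbf{f}]$, the construction is analogous on each irreducible component, but one also has to include gluing parameters $\mathbf{a} = (a_\nu)$ indexed by the nodes. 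The essential analytic input is the $C^1$ gluing theorem announced in \cite{gluingme}, which shows that the pre-gluing/Newton iteration produces a map that is $C^1$ in $\mathbf{a}$ (not merely continuous, as in the classical weak gluing theorem); this is precisely what promotes the chart from stratified smooth to $C^1$ SS.

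Once each $[\mathbf{f}]$ has a basic chart, I would pick a finite set $\{K_i\}_{i\in I_0}$ of basic charts whose footprints cover the compact space $X$. For each nonempty subset $I \subset I_0$ with $\bigcap_i F_i \ne \emptyset$, a sum chart is defined by $E_I := \bigoplus_{i \in I} E_i$ and $U_I$ consisting of maps whose $\bar\partial_J$-defect lies in $E_I$, together with an $E$-direction and a domain-inclusion coordinate change $U_J \hookrightarrow U_I$ whenever $I \subset J$. I would verify the axioms of a weak Kuranishi atlas: footprints, the weak cocycle condition (equality up to restriction), tameness at the topological level, and the additivity of dimensions $\dim U_I - \dim E_I = d$. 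Orientation of $\mathcal{K}$ follows from determinant-line orientations of the real Cauchy--Riemann operator combined with complex orientations of the $E_i$.

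The principal obstacle is the analytic content of Step 2: the $C^1$ regularity of the gluing map in the gluing parameters is substantially more delicate than classical gluing, requiring refined weighted Sobolev estimates near the node and careful control of the Newton iteration's derivative with respect to $\mathbf{a}$. This is imported wholesale from \cite{gluingme}; everything else---construction of basic charts, sum charts, coordinate changes, and verification of the weak atlas axioms---follows the established McDuff--Wehrheim template adapted to the $C^1$ SS category introduced in Section \ref{c1ss}.
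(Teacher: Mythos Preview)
Your outline matches the construction sketched in Section~\ref{gwcharts} of the paper (which in turn defers the full proof to \cite{gluingme}): slicing manifold to stabilize the domain, finite-dimensional obstruction space surjecting onto the cokernel, the $C^1$ gluing theorem as the analytic core, and the McDuff--Wehrheim sum-chart machinery. Two points need correction, however. First, you omit the isotropy group $\Gamma$ entirely; a Kuranishi chart in the sense of Definition~\ref{chart} carries a finite group action, and for genus zero stable maps this can be nontrivial (multiple covers). The paper handles this by taking $E = \prod_{\gamma\in\Gamma} E_0$ with $\Gamma$ acting by permutation, and the domain $U$ must be made $\Gamma$-invariant before cutting down by the slicing condition; without this the footprint map cannot descend to a homeomorphism from $s^{-1}(0)/\Gamma$. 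Second, your coordinate change direction is reversed: for $I\subset J$ one has $E_I\hookrightarrow E_J$, so maps with defect in $E_I$ sit inside those with defect in $E_J$, giving (roughly) an inclusion of a subset of $U_I$ into $U_J$, not $U_J\hookrightarrow U_I$; more precisely the coordinate change $\widehat\Phi_{IJ}:\mathbf{K}_I\to\mathbf{K}_J$ involves a submanifold $\widetilde U_{IJ}\subset U_J$ together with a group covering $\rho_{IJ}:\widetilde U_{IJ}\to U_{IJ}\subset U_I$.
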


Moreover, the charts $U$ of $\mathcal{K}$ (described in Section \ref{gwcharts}) lie over Deligne-Mumford space $\overline{\mathcal{M}}_{0,k}$ in the sense that there is a forgetful map
\begin{equation}\label{forgetful}
\pi_0:U\rightarrow \overline{\mathcal{M}}_{0,k}.
\end{equation}
It is shown in \cite{gluingme} that $\overline{\mathcal{M}}_{0,k}$ admits a $C^1$ SS structure compatible with the atlas $\mathcal{K}$ and its usual topology. The main result is the following.

\begin{proposition}\cite[Proposition 1.1.2]{gluingme}\label{dm1}
Genus zero Deligne-Mumford space $\overline{\mathcal{M}}_{0,k}$ admits the structure of a $C^1$ SS manifold, denoted $\overline{\mathcal{M}}_{0,k}^{new}$, that is compatible with the $C^1$ SS atlas of Theorem \ref{admitthm} in the following sense. Let $\mathcal{K}$ denote the Kuranishi atlas on the Gromov-Witten moduli space $\overline{\mathcal{M}}_{0,k}(A,J)$ from Theorem \ref{admitthm} and $U$ a domain of $\mathcal{K}$. Then the forgetful map
\[\pi_0:U\rightarrow \overline{\mathcal{M}}_{0,k}^{new}\]
is $C^1$ SS.
\end{proposition}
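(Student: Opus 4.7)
The plan is to build the $C^1$ SS structure on $\overline{\mathcal{M}}_{0,k}$ out of the ``domain parts'' of the Kuranishi charts furnished by Theorem~\ref{admitthm}, and then verify that, with respect to this structure, the forgetful map $\pi_0$ in (\ref{forgetful}) becomes $C^1$ SS essentially tautologically.

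First I would recall the explicit description (to be given in Section~\ref{gwcharts}) of a chart $U$ covering a stable map $[(\Sigma_0,u_0)]$. Such a $U$ is parameterized, up to a finite isotropy action, by four kinds of data: gluing parameters $\mathbf{a}=(a_1,\dots,a_m)$ (one small complex number for each node of $\Sigma_0$), complex-structure moduli for each component of the normalization, positions of the marked points, and ``map-direction'' parameters perturbing $u_0$. The map $\pi_0$ of (\ref{forgetful}) records only the first three: it remembers the stabilized glued domain and forgets the map. Dually, one can pick a local slice $s$ transverse to the map-direction fibers; the image of $s$ is in natural bijection with a neighborhood of $[\Sigma_0]$ in $\overline{\mathcal{M}}_{0,k}$.

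Using these slices I would define charts on $\overline{\mathcal{M}}_{0,k}^{new}$ of the form (polydisk in gluing parameters)$\times$(smooth chart on the moduli of the normalization with marked points), with the stratification by which $a_i$ vanish. On each stratum such a chart is smooth in the classical sense, so the only thing to check is that two such charts, built from two different ambient Kuranishi charts, are $C^1$ SS compatible on their overlap. This reduces to two sub-claims: (i) that the change-of-gluing-parameter map is $C^1$ across the strata where nodes smooth, and (ii) that the standard reparametrization maps between moduli of the normalization are smooth. Claim (ii) is classical, and claim (i) is exactly where the strengthened $C^1$ gluing theorem of \cite{gluingme}, which already underlies Theorem~\ref{admitthm}, is used: the $C^1$ control on the glued family of complex structures (not only on the glued maps) translates directly into $C^1$ regularity of the transition between two gluing profiles. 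Compatibility with the usual (Gromov) topology on $\overline{\mathcal{M}}_{0,k}$ follows because the gluing construction produces curves that converge to $[\Sigma_0]$ in the Gromov topology as $\mathbf{a}\to 0$.

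Once the atlas on $\overline{\mathcal{M}}_{0,k}^{new}$ is in hand, the compatibility statement is immediate: in the local coordinates on $U$ described above, $\pi_0$ is the coordinate projection onto the (domain-parameter)$\times$(gluing-parameter) factor, which is smooth on each stratum and $C^1$ across strata by construction. The main obstacle, and the entire reason the statement is nontrivial, is sub-claim (i) above: verifying that the $C^1$ gluing theorem, which is usually formulated as regularity of a family of \emph{maps}, implies the corresponding $C^1$ regularity of the family of \emph{domains} under change of trivialization. I would expect this to require a careful inspection of how the pregluing of complex structures enters the construction, disentangling it from the pregluing of the maps $u_0$.
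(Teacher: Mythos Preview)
The paper does not actually prove this proposition. It is stated as a citation, \cite[Proposition~1.1.2]{gluingme}, and the surrounding text (and later Remark~\ref{maps}) simply invokes the result: ``It is shown in \cite{gluingme} that $\overline{\mathcal{M}}_{0,k}$ admits a $C^1$ SS structure\ldots''. So there is no in-paper argument to compare your sketch against.

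As a standalone sketch, your outline is plausible but has a structural wrinkle worth flagging. You propose to define the $C^1$ SS charts on $\overline{\mathcal{M}}_{0,k}^{new}$ by taking slices inside Kuranishi charts for $\overline{\mathcal{M}}_{0,k}(A,J)$. This makes the $C^1$ SS structure on Deligne--Mumford space a priori dependent on the ambient symplectic manifold $M$, the class $A$, and the particular atlas $\mathcal{K}$---which is awkward, since one wants a single structure $\overline{\mathcal{M}}_{0,k}^{new}$ compatible with \emph{every} such atlas. The cleaner route (and presumably the one taken in \cite{gluingme}) is to put the $C^1$ SS structure on $\overline{\mathcal{M}}_{0,k}$ intrinsically, using only gluing parameters and cross ratios on the universal curve, and then check separately that the forgetful map from any Kuranishi domain is $C^1$ SS. Your sub-claim (i) is indeed the crux either way, and your identification of the $C^1$ gluing theorem as the input is correct; but you should decouple the construction of $\overline{\mathcal{M}}_{0,k}^{new}$ from any fixed $\mathcal{K}$ before proving compatibility.
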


The first result of this paper is that a $C^1$ SS structure is sufficient to obtain a virtual fundamental class for any virtual dimension. This generalizes the same result for \emph{smooth} Kuranishi atlases, Theorem \ref{theoremb} below.

\begin{theorem}\label{c1thm}
Let $\mathcal{K}$ be a oriented, weak, d dimensional $C^1$ SS Kuranishi atlas on a compact metrizable space $X$. Then $\mathcal{K}$ determines a cobordism class of oriented, compact weighted branched $C^1$ manifolds, and an element $[X]_\mathcal{K}^{vir}\in \check{H}_d(X;\mathbb{Q})$. Both depend only on the oriented cobordism class of $\mathcal{K}$. Here $\check{H}_{*}$ denotes $\check{C}$ech homology.
\end{theorem}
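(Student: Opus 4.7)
The strategy is to parallel the construction of the virtual fundamental class in the smooth case (Theorem~\ref{theoremb} of McDuff--Wehrheim) with modifications to accommodate the $C^1$ SS structure. First, starting from the weak atlas $\mathcal{K}$, I would pass to a tame shrinking and then to a reduction $\mathcal{V}$. This step is purely topological and metric---one only shrinks chart domains---so the construction carries over verbatim and preserves the $C^1$ SS structure of the charts and coordinate changes.

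The heart of the proof is the construction of a multivalued perturbation $\nu$ of the section $s$ of $\mathcal{K}$ whose graph is transverse to the zero section and which is itself $C^1$ SS. In the smooth setting McDuff--Wehrheim draw admissible perturbations from a carefully chosen finite-dimensional space and apply a Sard--Smale-type argument. Here I would stratify each chart by depth (the number of nodes in the underlying curve), which makes each stratum a smooth manifold, and construct $\nu$ inductively on strata: on the top (smooth) stratum, apply the standard transversality argument directly; when extending to a lower stratum, perturb within the $C^1$ SS class by a term supported in a neighborhood of that stratum so as to achieve transversality there without destroying the transversality already arranged on open strata. The $C^1$ compatibility across strata then ensures that $s+\nu$ is globally $C^1$ SS and that its zero set is transverse on every stratum.

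Once a transverse $C^1$ SS perturbation is in hand, the zero set $(s+\nu)^{-1}(0)$ inherits from the $C^1$ SS structure the structure of a compact oriented weighted branched $C^1$ manifold of dimension $d$, giving the cobordism class of oriented weighted branched $C^1$ manifolds in the conclusion. To produce the class $[X]^{vir}_\mathcal{K}\in \check{H}_d(X;\mathbb{Q})$, I would push the fundamental class of this branched manifold forward via the footprint projection to $X$, and use the continuity of \v{C}ech homology on compact metrizable spaces together with an inverse-limit argument over nested open neighborhoods of $X$ in the realization $|\mathcal{K}|$ to extract a well-defined class independent of the perturbation. Cobordism invariance of both the weighted branched class and the \v{C}ech class follows by running the same construction on an oriented cobordism atlas of the form $\mathcal{K}\times[0,1]$, which yields a weighted branched $C^1$ cobordism between the perturbed zero sets at the two ends.

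The main technical obstacle is the inductive construction of a $C^1$ SS perturbation that is simultaneously transverse on every stratum while preserving $C^1$ compatibility across strata: small adjustments used to achieve transversality on one stratum can a priori destroy transversality on another, and genericity arguments in the $C^1$ SS category are more delicate than their smooth counterparts. I expect the notion of \emph{transverse subatlas} announced in the abstract to be the central device for handling this, reducing the full transversality problem to one on a smaller, more tractable subatlas on which the stratum-by-stratum induction can be carried out cleanly and then transferred back to $\mathcal{K}$.
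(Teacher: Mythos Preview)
Your overall outline is right---taming and reduction are purely topological and carry over unchanged, and the only genuine issue is the perturbation section---and this matches the paper exactly. But your approach to the perturbation and your guess about transverse subatlases diverge from what the paper actually does.

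The paper does \emph{not} build $\nu$ by induction on strata. Instead it runs the McDuff--Wehrheim inductive construction over charts (inducting on $|I|$) exactly as in the smooth case, and observes that only two steps in that construction use smoothness. The first, extending $\mu_J^j$ off a submanifold chart of $N_{JL}^k$, is handled by choosing a local projection that is $C^1$, strata-preserving, and smooth on each stratum, then extending constant along its fibres. The second, the final small perturbation $\nu_\pitchfork$ achieving transversality, is handled by a $C^1$ SS version of the Transversality Extension Theorem: one applies Sard's theorem separately to the restriction of the projection $F^{-1}(Z)\to S$ to each stratum (where these restrictions are smooth), and since there are only finitely many strata the intersection of the full-measure sets of regular values is still full-measure. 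A single generic parameter then gives SS transversality---transversality on every stratum at once---so no stratum-by-stratum induction or extension is needed. This sidesteps entirely the worry you flag about perturbations on one stratum spoiling transversality on another.

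Your expectation about transverse subatlases is simply wrong for this theorem. Transverse subatlases play no role in the proof of Theorem~\ref{c1thm}; they are introduced later (Section~\ref{tsubatlases}) to compare virtual classes of constrained moduli spaces with those of unconstrained ones, i.e.\ to prove results like Theorem~\ref{homologyagree}. The device that makes Theorem~\ref{c1thm} work is the $C^1$ SS Transversality Theorem (Lemma~\ref{c1tran}).
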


Combining Theorems \ref{admitthm} and \ref{c1thm} we see that the Gromov-Witten moduli space admits a virtual fundamental cycle (for any virtual dimension). We can use this to define genus zero Gromov-Witten invariants with homological constraints. Let $X:=\overline{\mathcal{M}}_{0,k}(A,J)$ be the compact space of nodal $J$-holomorphic genus zero stable maps in class $A\in H_2(M)$ with $k$ marked points modulo reparametrization. Let $c_1,\ldots,c_k\in H_{*}(M)$ and $\beta\in H_{*}(\overline{\mathcal{M}}_{0,k})$. Let $Z_{c_1\times\cdots\times c_k}, Z_{\beta}$ be cycle representatives of $c_1\times\cdots\times c_k, \beta$. The space $X$ carries an evaluation map
\[ev_k:X\rightarrow M^k\]
and a forgetful map
\[\pi_0:X\rightarrow\overline{\mathcal{M}}_{0,k}.\]
The Gromov-Witten invariant $GW_{A,k}^M(c_1,\ldots, c_k;\beta)$ is meant to count the number of curves in $X$ with domain in $Z_\beta$ and with marked points lying on $Z_{c_1\times\cdots\times c_k}$. We can define this precisely as follows. Theorems \ref{admitthm} and \ref{c1thm} allows us to define the virtual fundamental class $[X]_\mathcal{K}^{vir}\in \check{H}_{*}(X;\mathbb{Q})$. We can then define the Gromov-Witten invariant
\begin{equation}\label{gwdef}
GW_{A,k}^M(c_1,\ldots, c_k) = (ev_k\times\pi_0)_{*}([X]_\mathcal{K}^{vir})\cdot (c_1\times\cdots\times c_k\times\beta) \in \mathbb{Q}
\end{equation}
using the intersection product in $M^k\times\overline{\mathcal{M}}_{0,k}$.

In \cite{axioms} Kontsevich and Manin listed axioms\footnote{These ``axioms" are not expected to characterize Gromov-Witten invariants uniquely.} for Gromov-Witten invariants. These axioms are meant to be the main properties of Gromov-Witten invariants; for instance, they are sufficient to prove Kontsevich's celebrated formula for the Gromov-Witten invariants of $\mathbb{C}P^2$. They considered invariants of all genera, but we will only consider genus zero invariants.\footnote{Among the reasons we restrict to the genus zero case is that genus zero Deligne-Mumford space is a smooth manifold with coordinates given by cross ratios. One expects the results of this paper and \cite{gluingme} to hold in the higher genus case, but further work is required. These issues are discussed more in \cite{gluingme}.} The second main result of this paper is that Gromov-Witten invariants, as defined using Kuranishi atlases, satisfy these axioms.

\begin{theorem}\label{axiomthm}
Let $M^{2n}$ be a closed symplectic manifold and $A\in H_{2}(M),$ $c_i\in H_{*}(M),$  $\beta\in H_{*}(\overline{\mathcal{M}}_{0,k})$. Let $GW_{A,k}^M(c_1,\ldots,c_k;\beta)$ be the Gromov-Witten invariant, defined by (\ref{gwdef}) using Kuranishi atlases. Then these invariants satisfy the Gromov-Witten axioms of Kontsevich and Manin \cite{axioms} listed below.
\begin{enumerate}
\item[\textbf{(Effective)}] If $\omega(A) < 0$, then $GW^M_{A,k}=0$.
\item[\textbf{(Symmetry)}] For each permutation $\sigma\in S_k$,
\[GW^{M}_{A,k}(c_{\sigma(1)},\ldots,c_{\sigma(k)};\sigma_{*}\beta) = \varepsilon(\sigma;c)GW^M_{A,k}(c_1,\ldots,c_k;\beta)\]
where
\[\varepsilon(\sigma;c):=(-1)^{\#\{i<j ~|~ \sigma(i)>\sigma(j), \deg(c_i)\deg(c_j)\in2\mathbb{Z}+1\}}\]
denotes the sign of the induced permutation on the classes of odd degree and $\sigma_*\beta$ denotes the pushforward of a homology class $\beta\in H_*(\overline{\mathcal{M}}_{0,k})$ under the diffeomorphism of $\overline{\mathcal{M}}_{0,k}$ defined as follows:
Identify $\overline{\mathcal{M}}_{0,k}$ with its image in $(S^2)^N$ using the extended cross ratio function $\mathbf{z}\mapsto w_{ijkl}(\mathbf{z})$, and denote its elements by tuples $\{w_{ikjl}\}$ (c.f. \cite[Appendix D]{JHOL}). Then $\sigma$ defines a diffeomorphism via
\[\{w_{i_0i_1i_2i_3}\}\mapsto \{w_{\sigma(i_0)\sigma(i_1)\sigma(i_2)\sigma(i_3)}\}.\]
\item[\textbf{(Grading)}] If $GW^M_{A,k}(c_1,\ldots,c_k;\beta)\neq0$, then
\[\sum_{i=1}^{k}(2n-\deg(c_i)) - \deg(\beta) = 2n + 2c_1(A).\]
\item[\textbf{(Homology)}] For every $A\in H_2(M;\mathbb{Z})$ and every integer $k\geq 3$ there exists a homology class
\[\sigma_{A,k}\in H_{2n+2c_1(A)+2k-6}(M^k\times\overline{\mathcal{M}}_{0,k})\]
such that
\[GW^M_{A,k}(c_1,\ldots,c_k;\beta) = (c_1\times\ldots\times c_k\times\beta)\cdot\sigma_{A,k}.\]
\item[\textbf{(Fundamental class)}] Let
\[\pi_{0,k}:\overline{\mathcal{M}}_{0,k}\rightarrow\overline{\mathcal{M}}_{0,k-1}\]
denote the map that forgets the last marked point. Then
\[GW^M_{A,k}(c_1,\ldots,c_{k-1},[M];\beta) = GW^M_{A,k-1}(c_1,\ldots,c_{k-1};(\pi_{0,k})_*\beta).\]
In particular, this invariant vanishes if $\beta=[\overline{\mathcal{M}}_{0,k}]$.
\item[\textbf{(Divisor)}] If $(A,k)\neq(0,3)$ and $\deg(c_k)=2n-2$, then
\[GW^M_{A,k}\Big(c_1,\ldots,c_k;PD\big(\pi_{0,k}^*PD(\beta)\big)\Big) = (c_k\cdot A)~GW^M_{A,k-1}(c_1,\ldots,c_{k-1};\beta).\]
\item[\textbf{(Zero)}] If $A=0$, then $GW^M_{0,k}(c_1,\ldots,c_k;\beta) = 0$ whenever $\deg(\beta)>0$, and
\[GW_{0,k}^M(c_1,\ldots,c_k;[\textnormal{pt}]) = c_1\cap\cdots\cap c_k\]
where [pt] denotes the homology class of a point.
\item[\textbf{(Splitting)}]Fix a basis $e_0,\ldots,e_N$ of the homology $H_*(M)$, let
\[g_{\nu\mu} := e_\nu\cdot e_\mu,\]
and denote by $g^{\nu\mu}$ the inverse matrix. Fix a partition of the index set $\{1,\ldots,k\} = S_0\sqcup S_1$ such that $k_i:=|S_i|\geq 2$ for $i=0,1$ and denote by
\[\phi_{S}:\overline{\mathcal{M}}_{0,k_0+1}\times\overline{\mathcal{M}}_{0,k_1+1}\rightarrow\overline{\mathcal{M}}_{0,k}\]
the canonical map which identifies the last marked point of a stable curve in $\overline{\mathcal{M}}_{0,k_0+1}$ with the first marked point of a stable curve in $\overline{\mathcal{M}}_{0,k_0+1}$. The remaining indices have the unique ordering such that the relative order is preserved, the first $k_0$ points in $\overline{\mathcal{M}}_{0,k_0+1}$ are mapped to the points indexed by $S_0$, and the last $k_1$ points in $\overline{\mathcal{M}}_{0,k_1+1}$ are mapped to the points indexed by $S_1$. Fix two homology classes $\beta_0\in H_*(\overline{\mathcal{M}}_{0,k_0+1})$ and $\beta_1\in H_*(\overline{\mathcal{M}}_{0,k_1+1})$. Then
\begin{align*}
&GW^M_{A,k}(c_1,\ldots,c_k;{\phi_S}_{*}(\beta_0\otimes\beta_1))\\
= &\varepsilon(S,c)\sum_{A_0+A_1=A}\sum_{\nu,\mu}GW^M_{A_0,k_0+1}(\{c_i\}_{i\in S_0},e_\nu;\beta_0)g^{\nu\mu}GW^M_{A_1,k_1+1}(e_\mu,\{c_j\}_{j\in S_1};\beta_1),
\end{align*}
where
\[\varepsilon(S,c):=(-1)^{\#\{j<i|i\in S_0,j\in S_1,\deg(a_i)\deg(a_j)\in 2\mathbb{Z}+1\}}.\]
\end{enumerate}
\end{theorem}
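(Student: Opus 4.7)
The plan is to verify each of the eight axioms using either the direct definition (\ref{gwdef}) or an auxiliary construction of a Kuranishi atlas together with the cobordism invariance of the virtual fundamental class guaranteed by Theorem \ref{c1thm}. The transverse subatlas technology from the abstract will be the key tool for cutting down and comparing atlases when the constraint classes are represented by cycles transverse to the various evaluation and forgetful maps.

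The first four axioms are nearly formal. \textbf{Effective} follows from the energy identity $E(u)=\omega(A)$, so $\omega(A)<0$ forces $\overline{\mathcal{M}}_{0,k}(A,J)=\varnothing$ and the atlas is empty. \textbf{Symmetry} is the statement that relabeling the $k$ marked points induces an isomorphism of Kuranishi atlases on $\overline{\mathcal{M}}_{0,k}(A,J)$ intertwining $ev_k\times\pi_0$ with the analogous map twisted by $\sigma$; the sign $\varepsilon(\sigma;c)$ is the usual Koszul sign from permuting the constraint cycles. \textbf{Grading} is pure dimension counting, since $[X]^{vir}_{\mathcal{K}}\in\check{H}_d$ with $d=2n+2c_1(A)+2k-6$ and the intersection in (\ref{gwdef}) is nonzero only when the cycle dimensions match $d$. \textbf{Homology} follows by setting $\sigma_{A,k}:=(ev_k\times\pi_0)_{*}[X]^{vir}_{\mathcal{K}}$, which is well defined by Theorem \ref{c1thm}.

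The \textbf{Zero} axiom requires identifying $\overline{\mathcal{M}}_{0,k}(0,J)$ with $M\times\overline{\mathcal{M}}_{0,k}$ via constant maps; I will construct a smooth (in fact manifold) Kuranishi atlas on this product whose VFC is the ordinary fundamental class, then invoke cobordism uniqueness to replace the atlas of Theorem \ref{admitthm} by this one, reducing the invariant to the classical intersection $c_1\cap\cdots\cap c_k$ when $\beta=[\text{pt}]$ and to zero otherwise by dimension. The \textbf{Fundamental class} axiom is handled by building compatible atlases on $\overline{\mathcal{M}}_{0,k}(A,J)$ and $\overline{\mathcal{M}}_{0,k-1}(A,J)$ so that the forgetful map $\pi_{0,k}$ lifts to a $C^1$ SS chart-level submersion with fiber of virtual dimension 2; inserting $[M]$ as the $k$-th constraint then amounts to integrating over this fiber, which produces the $(k-1)$-pointed invariant with constraint $(\pi_{0,k})_{*}\beta$. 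The \textbf{Divisor} axiom then follows by Poincaré duality on Deligne-Mumford space applied to the fiber of $\pi_{0,k}$, combined with the geometric fact that the evaluation at the forgotten marked point intersects a codimension-$2$ cycle Poincaré dual to $PD(c_k)$ in $(c_k\cdot A)$ points along a generic $J$-curve in class $A$.

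The hard part will be \textbf{Splitting}. The approach is to observe that $\phi_{S*}(\beta_0\otimes\beta_1)$ is supported on the boundary stratum of $\overline{\mathcal{M}}_{0,k}$ corresponding to nodal curves of type $(S_0,S_1)$, whose preimage in $\overline{\mathcal{M}}_{0,k}(A,J)$ under $\pi_0$ is the fibered union
\[Y=\bigsqcup_{A_0+A_1=A}\overline{\mathcal{M}}_{0,k_0+1}(A_0,J)\,{}_{ev_{k_0+1}}\!\times_{M,ev_1}\overline{\mathcal{M}}_{0,k_1+1}(A_1,J).\]
I will construct a Kuranishi atlas on a neighborhood of this stratum in $\overline{\mathcal{M}}_{0,k}(A,J)$ whose charts near a nodal configuration are products of charts on the two factor moduli spaces times a gluing parameter; the $C^1$ gluing theorem of \cite{gluingme} is exactly what permits such product charts in the $C^1$ SS category. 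Using a transverse subatlas of this stratum atlas and decomposing the diagonal class of $M$ as $\Delta_{M}=\sum_{\nu,\mu}g^{\nu\mu}e_\nu\otimes e_\mu$ to account for the fiber product over $M$, the intersection in (\ref{gwdef}) pulled back to $Y$ factors as a sum of products of intersections on the factor moduli spaces, which are precisely the right-hand-side invariants. The sign $\varepsilon(S,c)$ appears from reordering the constraints on the two factors. The main technical work is justifying the product structure of the boundary atlas and showing that it is cobordant (as $C^1$ SS atlases) to the one supplied by Theorem \ref{admitthm}; this is where the transverse subatlas formalism and the chart descriptions over $\overline{\mathcal{M}}_{0,k}^{new}$ provided by Proposition \ref{dm1} play the central role.
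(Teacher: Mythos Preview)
Your treatment of \textbf{Effective}, \textbf{Symmetry}, \textbf{Grading}, \textbf{Homology}, and \textbf{Zero} matches the paper's, and these really are formal.

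For the remaining three axioms your outline diverges from the paper in a way that hides the main technical device. The paper does not argue with fibrewise integration or submersions at the level of virtual cycles. Instead it first reduces, via Lemma~\ref{generate}, to the explicit generators $\beta_{k,I}$ of $H_*(\overline{\mathcal{M}}_{0,k})$ and then uses the transverse subatlas machinery (packaged as Theorem~\ref{inttheorem}) to replace each side of the identity by the virtual class of a \emph{zero}-dimensional constrained moduli space. At that point the comparison is a bijection of finite perturbed zero sets, not an integration argument. A second ingredient you omit is Proposition~\ref{codim2}: for a $0$-dimensional $C^1$ SS atlas one can choose the perturbation so that the perturbed zero set avoids all lower strata. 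This is what makes the forgetful map $\pi_{0,k}$ a genuine bijection (Fundamental class, Case~1) or makes the fibre over each zero an honest smooth sphere representing $A$ (Divisor). Your phrase ``integrating over the fibre'' does not have an obvious meaning for perturbed sections on a Kuranishi atlas without this reduction, so as written that step is a gap.

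For \textbf{Splitting} the paper takes a rather different route from the product-chart construction you propose. It uses the \emph{nodal} cycle representatives $Z_{k,I}$ of Lemma~\ref{zki} together with the decomposition of Lemma~\ref{759}, accepts that the resulting constrained domains are only stratified pseudomanifolds (Remark~\ref{nonsmooth}), and then defers the remaining combinatorics to \cite{JHOL}. Your plan to build product Kuranishi charts near the nodal stratum and prove they are $C^1$~SS cobordant to the atlas of Theorem~\ref{admitthm} is plausible in spirit but is a substantial project in its own right (compatibility of obstruction bundles, gluing, and the transverse-subatlas comparison would all have to be carried out), and the paper deliberately avoids it.
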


\begin{remark}
The question of how to construct a virtual fundamental class for moduli spaces of $J$-holomorphic curves has a long history. One can refer to \cite[Remark 1.1.3]{gluingme} and the references therein for a discussion of how this project relates to other methods. We mention that Fukaya-Ono \cite{fo} prove the Gromov-Witten axioms, except the \textit{(Homology)} axiom. Li-Tian \cite{litianalg} prove the Gromov-Witten axioms in the case that $M$ is a projective variety.\hfill$\Diamond$
\end{remark}

Section \ref{gwatlases} outlines the construction of the virtual fundamental cycle for Gromov-Witten moduli spaces. Section \ref{atlases} contains the necessary background on Kuranishi atlases and the construction of the virtual fundamental cycle associated to a Kuranishi atlas. One of the steps in this construction is building a \textit{perturbation section}; this discussion is somewhat more involved as is deferred until Section \ref{sections}. In Section \ref{originalsection} we describe the original construction of McDuff-Wehrheim in some detail and then in Sections \ref{sectionsc1}, \ref{tsections}, \ref{complementsections} show how to adapt the original construction to different circumstances that are required in this paper. Constructing these sections is the most technically challenging component of the proofs of the main theorems of this paper and requires understanding the original construction. Section \ref{c1ss} gives a review of stratified spaces, discusses their role with respect to Kuranishi atlases, and proves Theorem \ref{c1thm} modulo results from Section \ref{sectionsc1}. Section \ref{gwcharts} describes the construction of the Kuranishi atlases on $X=\overline{\mathcal{M}}_{0,k}(A,J)$ in Theorem \ref{admitthm}. Section \ref{constraints} completes the construction of Gromov-Witten invariants by considering homological constraints and proves basic results about them: Section \ref{homconstruction} discusses constraints on the image of curves and Section \ref{domainconstruction} discusses constraints on the domains. Section \ref{tsubatlases} contains the details regarding these constrained invariants and introduces the key notion of a \textit{transverse subatlas}; this section contains the structural ideas and geometric arguments with some more technical discussions deferred to Section \ref{tsections}. Theorem \ref{axiomthm} is proved in Section \ref{gwaxioms}; this section contains the geometric arguments and uses the results of Sections \ref{constraints} and \ref{sections}.\\

\noindent\textbf{Acknowledgements}: The author is very thankful to his advisor Dusa McDuff for many useful discussions on Kuranishi atlases as well as for suggesting this project.

\section{Gromov-Witten virtual fundamental cycle}\label{gwatlases}
This section provides background on Kuranishi atlases, stratified smooth spaces, and applies these theories to Gromov-Witten moduli spaces.

\subsection{Kuranishi atlases}\label{atlases}
First, we briefly recall the basic definitions of Kuranishi charts and the main theorems.

\begin{definition}\label{chart}
Let $F\subset X$ be a nonempty open subset. A \textbf{Kuranishi chart} for $X$ with footprint $F$ is a tuple $\mathbf{K} = (U,E,\Gamma,s,\psi)$ consisting of
\begin{itemize}
\item The \textbf{domain} $U$, which is a finite dimensional manifold.
\item The \textbf{obstruction space} $E$, which is a finite dimensional vector space.
\item The \textbf{isotropy group} $\Gamma$, which is a finite group acting smoothly on $U$ and linearly on $E$.
\item The \textbf{section} $s:U\rightarrow E$ which is a smooth $\Gamma$-equivariant map.
\item The \textbf{footprint map} $\psi:s^{-1}(0)\rightarrow X$ which is a continuous map that induces a homeomorphism
\[\underline{\psi}:\faktor{s^{-1}(0)}{\Gamma}\rightarrow F\]
where $F\subset X$ is an open set called the \textbf{footprint}.
\end{itemize}
The \textbf{dimension} of $\mathbf{K}$ is $\dim{\mathbf{K}} = \dim U - \dim E$.\\
\end{definition}

To implement the topological constructions needed on Kuranishi charts, it will be convenient to consider the notion of intermediate Kuranishi charts.
\begin{definition}\label{intermediatedef}
The \textbf{intermediate chart} $\underline{\mathbf{K}}:=(\underline{U},\underline{U\times E},\underline{s},\underline{\psi})$ associated to a Kuranishi chart $\mathbf{K}=(U,E,\Gamma,s,\psi)$ consists of
\begin{itemize}
\item The \textbf{intermediate domain} $\underline{U}:=\faktor{U}{\Gamma}$.
\item The \textbf{intermediate obstruction ``bundle''}, whose total space is the quotient $\underline{U\times E}$ by the diagonal action of $\Gamma$. This carries a projection $pr:\underline{E}\rightarrow\underline{U}$ and a zero section $0:\underline{U}\rightarrow\underline{E}$.
\item The \textbf{intermediate section} $\underline{s}:\underline{U}\rightarrow\underline{E}$ induced by $s$.
\item The \textbf{intermediate footprint map} $\underline{\psi}:\underline{s}^{-1}(\textnormal{im }0)\rightarrow X$ induced by $\psi$. Note that \textnormal{im}$\underline{\psi} = F\subset X$.
\end{itemize}
We will let $\pi:U\rightarrow\underline{U}$ denote the quotient map.
\end{definition}

Suppose that we have a finite collection of Kuranishi charts $(\mathbf{K}_i)_{i=1,\ldots,N}$ such that for each $I\subset\{1,\ldots,N\}$ satisfying $F_I:=\bigcap_{i\in I}F_i\neq\emptyset$, we have a Kuranishi charts $\mathbf{K}_I$ with
\begin{itemize}
\item Footprint $F_I$,
\item Isotropy group $\Gamma_I:=\prod_{i\in I}\Gamma_i$,
\item Obstruction space $E_I:=\prod_{i\in I}E_i$ on which $\Gamma_I$ acts with the product action.
\end{itemize}
Such charts $\mathbf{K}_I$ are known as \textbf{sum charts}. Thus, for $I\subset J$ we have a natural splitting $\Gamma_J\cong\Gamma_I\times\Gamma_{J\setminus I}$ and an induced projection $\rho_{IJ}^\Gamma:\Gamma_J\rightarrow\Gamma_I$. Moreover, we have a natural inclusion $\widehat{\phi}_{IJ}:E_I\rightarrow E_J$ which is equivariant with respect to the inclusion $\Gamma_I\hookrightarrow\Gamma_J$. Thus we can consider $E_I$ as a subset of $E_J$.

\begin{definition}\label{transitiondef}
Let $X$ be a compact metrizable space.
\begin{itemize}
\item A \textbf{covering family of basic charts} for $X$ is a finite collection $(\mathbf{K}_i)_{i=1,\ldots,N}$ of Kuranishi charts for $X$ whose footprints cover $X$.
\item \textbf{Transition data} for $(\mathbf{K}_i)_{i=1,\ldots,N}$ is a collection of sum charts $(\mathbf{K}_J)_{J\subset\mathcal{I}_\mathcal{K},|J|\geq 2}$, and coordinate changes $(\widehat{\Phi}_{IJ})_{I,J\in\mathcal{I}_\mathcal{K}, I\subsetneq J}$ satisfying:
    \begin{enumerate}
    \item $\mathcal{I}_\mathcal{K} = \left\{I\subset\{1,\ldots,N\} | \bigcap_{i\in I}F_i \neq 0\right\}$.
    \item $\widehat{\Phi}_{IJ}$ is a coordinate change $\mathbf{K}_I\rightarrow\mathbf{K}_J$. For the precise definition of a coordinate change, see \cite{mwiso}. A coordinate change consists of
        \begin{enumerate}[(i)]
        \item A choice of domain $\underline{U}_{IJ}\subset \underline{U}_I$ such that $\mathbf{K}_I|_{\underline{U}_{IJ}}$ has footprint $F_J$.
        \item A $\Gamma_J$-equivariant submanifold $\widetilde{U}_{IJ}\subset U_J$ on which $\Gamma_{J\setminus I}$ acts freely. Let $\widetilde{\phi}_{IJ}$ denote the $\Gamma_J$-equivariant inclusion.
        \item A \textbf{group covering} $(\widetilde{U}_{IJ},\Gamma,\rho_{IJ},\rho_{IJ}^\Gamma)$ of $(U_{IJ},\Gamma_I)$ where $U_{IJ}:=\pi^{-1}(\underline{U}_{IJ})\subset U_I$, meaning that $\rho_{IJ}:\widetilde{U}_{IJ}\rightarrow U_{IJ}$ is the quotient map $\widetilde{U}_{IJ}\rightarrow\faktor{\widetilde{U}_{IJ}}{\ker\rho_{IJ}^\Gamma}$ composed with a diffeomorphism $\faktor{\widetilde{U}_{IJ}}{\ker\rho_{IJ}^\Gamma}\cong U_{IJ}$ that is equivariant with respect to the induced $\Gamma_I$ action on both spaces. In particular, this implies that $\underline{\rho}:\underline{\widetilde{U}}_{IJ}\rightarrow\underline{U}_{IJ}$ is a homeomorphism (see \cite[Lemma 2.1.5]{mwiso}).
        \item The linear equivariant injection $\widehat{\phi}_{IJ}:E_I\rightarrow E_J$ as above.
    \item The inclusions $\widetilde{\phi}_{IJ},\widehat{\phi}_{IJ}$ and the covering $\rho_{IJ}$ intertwine the sections and the footprint maps in the sense that
        \begin{align*}
        s_J\circ\widetilde{\phi}_{IJ} &= \widehat{\phi}_{IJ}\circ s_I\circ\rho_{IJ}\quad& &\textnormal{on }\widetilde{U}_{IJ},\\
        \psi_J\circ\widetilde{\phi}_{IJ} &= \psi_I\circ\rho_{IJ}& &\textnormal{on }s_J^{-1}(0)\cap\widetilde{U}_{IJ} = \rho_{IJ}^{-1}(s_I^{-1}(0)).
        \end{align*}
    \item The map $s_{IJ}:=s_I\circ\rho_{IJ}$ is required to satisfy an \textbf{index condition}. This ensures that any two charts that are related by a coordinate change have the same dimension. It also implies that $\widetilde{U}_{IJ}$ is an open subset of $s_J^{-1}(E_I)$.
        \end{enumerate}
    \end{enumerate}
\end{itemize}
\end{definition}
One needs to express a way in which coordinate changes are compatible. This is described by cocycle conditions. For charts $\mathbf{K}_\alpha$, where $\alpha=I,J,K$ with $I\subset J\subset K$, a \textbf{weak cocycle condition} / \textbf{cocycle condition} / \textbf{strong cocycle condition} describe to what extent the composition $\widehat{\Phi}_{JK}\circ\widehat{\Phi}_{IJ}$ and the coordinate change $\widehat{\Phi}_{IK}$ agree. These conditions require them to agree on increasingly large subsets. See \cite{mwiso} for a complete discussion of cocycle conditions.

In constructions, one can achieve a weak cocycle condition, while a strong cocycle condition is what is required for the construction of a virtual fundamental class.
\begin{definition}
A \textbf{weak Kuranishi atlas of dimension d} on a compact metrizable space $X$ is transition data
\[\mathcal{K} = (\mathbf{K}_I,\Phi_{IJ})_{I,J\in\mathcal{I}_\mathcal{K},I\subsetneq J}\]
for a covering family $(\mathbf{K}_i)_{i=1,\ldots,N}$ of dimension d for $X$ that satisfies a weak cocycle condition. Similarly a \textbf{(strong) atlas} is required to satisfy a (strong) cocycle condition.
\end{definition}
The main theorem regarding Kuranishi atlases is the following.
\begin{theorem}\label{theoremb}
Let $\mathcal{K}$ be a oriented, weak, d dimensional smooth Kuranishi atlas on a compact metrizable space $X$. Then $\mathcal{K}$ determines a cobordism class of oriented, compact weighted branched topological manifolds, and an element $[X]_\mathcal{K}^{vir}\in \check{H}_d(X;\mathbb{Q})$. Both depend only on the oriented cobordism class of $\mathcal{K}$. Here $\check{H}_{*}$ denotes $\check{C}$ech homology.
\end{theorem}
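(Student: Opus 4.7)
The plan is to follow the program developed by McDuff and Wehrheim in \cite{mwtop, mwfund, mwiso}. First I would upgrade the weak atlas to one satisfying the strong cocycle condition by carefully shrinking the chart domains, using the compactness of $X$ and metrizability to choose precompact nested neighborhoods of the footprints $F_I$. Next I would further shrink to obtain a \emph{tame} atlas, in which the incidences among the (shrunk) domains $\widetilde{U}_{IJ}$ faithfully record the intersection pattern of the footprints, so that the quotient of $\bigsqcup_I U_I$ by the equivalence relation generated by the coordinate changes $\widehat{\Phi}_{IJ}$ is a reasonable topological space. Taming provides enough compatibility to form the virtual neighborhood $|\mathcal{K}|$, together with an obstruction ``bundle'' $|E_\mathcal{K}|\to|\mathcal{K}|$ into which the sections $s_I$ assemble as a global section $|s|$; the footprint maps identify $X$ with $|s|^{-1}(0)$ modulo isotropy, and taming is what forces $|\mathcal{K}|$ to be Hausdorff with $X\hookrightarrow|\mathcal{K}|$ a closed inclusion admitting a good neighborhood basis.

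The key and most technically delicate step is the construction of a transverse, compatible, multivalued perturbation $\nu$ of $|s|$. I would proceed inductively over the partial order on $\mathcal{I}_\mathcal{K}$ by inclusion, picking on each chart a $\Gamma_I$-equivariant multisection $\nu_I$ supported near $X$ such that $s_I+\nu_I$ is transverse to $0$, while enforcing the compatibility $\nu_J\circ\widetilde{\phi}_{IJ}=\widehat{\phi}_{IJ}\circ(\nu_I\circ\rho_{IJ})$ on the overlaps $\widetilde{U}_{IJ}$ and keeping the perturbed zero sets inside appropriate ``reduction'' subsets of the $U_I$. The multivalued character is forced by the need to retain equivariance under $\Gamma_{J\setminus I}$ while still achieving transversality, and it is precisely what produces the weighted branched structure on the perturbed zero set. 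I expect this to be the main obstacle: it is simultaneously a transversality problem, an equivariant extension problem, and a problem of controlling the geometric location of the zero set. McDuff--Wehrheim resolve it through a careful inductive scheme with nested reductions of the domains, whose details in the present setting are deferred to Section \ref{sections}.

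Once a suitable $\nu$ has been constructed, $Z_\nu:=(|s|+\nu)^{-1}(0)$ is a compact, oriented, weighted branched $d$-manifold sitting inside an arbitrarily small neighborhood of $X$ in $|\mathcal{K}|$. I would push forward the rational fundamental class of $Z_\nu$ along the composition of the quotient and footprint maps to obtain, for each sufficiently small open neighborhood $N\supset X$ in $|\mathcal{K}|$, a class in $H_d(N;\mathbb{Q})$, and then extract $[X]_\mathcal{K}^{vir}\in\check{H}_d(X;\mathbb{Q})$ by passing to the inverse limit over such neighborhoods as $\|\nu\|\to 0$; this shrinking argument is precisely the reason $\check{C}$ech homology is the correct target. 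Finally, independence of all choices and cobordism invariance would be obtained by running the same sequence of steps---shrinking, taming, perturbing---for a Kuranishi cobordism between two atlases to produce an oriented cobordism of weighted branched manifolds, which in turn yields an equality of the associated Čech classes.
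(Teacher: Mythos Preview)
Your outline is essentially the same program the paper follows: tame shrinking (Proposition \ref{taming}), reduction (Proposition \ref{reduction}), construction of an adapted perturbation (Proposition \ref{section}), passage to the maximal Hausdorff quotient of the perturbed zero set to obtain a compact weighted branched manifold (Proposition \ref{wbm}), and then the inverse limit in \v{C}ech homology, with cobordism arguments for well-definedness.

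The one substantive difference is in how you handle isotropy. You propose constructing $\Gamma_I$-equivariant \emph{multisections} and letting the branching arise from the multivalued nature of $\nu$. The paper instead follows \cite{mwiso}: one passes to the \emph{pruned} categories $\mathbf{B}_{\mathcal{K}}|_{\mathcal{V}}^{\setminus\Gamma}$, $\mathbf{E}_{\mathcal{K}}|_{\mathcal{V}}^{\setminus\Gamma}$ obtained by discarding the isotropy morphisms, builds a single-valued perturbation functor there (so the hard inductive construction is done in the isotropy-free setting of \cite{mwfund}), and only afterwards reintroduces the group action with weights $|\Gamma_I|^{-1}$ on the branches. Both routes produce the same weighted branched manifold, but the pruned-category approach separates the transversality/extension problem from the equivariance problem, which is exactly the simplification the paper exploits in Section \ref{sections} when adapting the construction to $C^1$ SS atlases and transverse subatlases.
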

In the case of trivial isotropy groups, this is \cite[Theorem B]{mwfund}. For nontrivial isotropy groups see \cite[Theorem A]{mwiso}.

This paper will be primarily interested in the case $X = \overline{\mathcal{M}}_{0,k}(A,J)$, the compact space of nodal $J$-holomorphic genus zero stable maps on a symplectic manifold in homology class $A$ with $k$ marked points modulo reparametrization, where $J$ is a tame almost complex structure. We will also be interested in subsets $X_c\subset X$ obtained from imposing homological constraints on elements of $X$. Section \ref{gwcharts} describes an atlas on $X$ and Section \ref{homconstruction} describes an atlas on $X_c$.

For the rest of this section we give an overview of the construction of the virtual fundamental class. The construction of $[X]_{\mathcal{K}}^{vir}$ takes place in several steps: First, a weak atlas is \emph{tamed}, which is a procedure that shrinks the domains and implies desirable topological properties. The taming procedure allows us to define two categories: The \textit{domain category} $\mathbf{B}_{\mathcal{K}}$ and the \textit{obstruction category} $\mathbf{E}_\mathcal{K}$, which are built from the domains $U_I$ and the obstruction bundles \mbox{$U_I\times E_I\rightarrow U_I$} respectively. These categories are equipped with a projection functor \mbox{pr$:\mathbf{E}_{\mathcal{K}}\rightarrow\mathbf{B}_{\mathcal{K}}$} and a section functor $s:\mathbf{B}_{\mathcal{K}}\rightarrow\mathbf{E}_{\mathcal{K}}$ that come from pr$_I:U_I\times E_I\rightarrow U_I$ and $s_I:U_I\rightarrow U_I\times E_I$. However, this category has too many morphisms so we pass to a full subcategory $\mathbf{B}_{\mathcal{K}}|_{\mathcal{V}}$ of $\mathbf{B}_{\mathcal{K}}$ known as a \emph{reduction} of $\mathcal{K}$. In the reduction, one can construct a perturbation functor $\nu:\mathbf{B}_{\mathcal{K}}|_{\mathcal{V}}\rightarrow\mathbf{E}_{\mathcal{K}}|_{\mathcal{V}}$. Then for an appropriately constructed perturbation $\nu$, the realization $|(s|_{\mathcal{V}}+\nu)^{-1}(0)|$ is a compact weighted branched manifold. For background on weighted branched manifolds see \cite[Appendix A]{mwiso}. The virtual fundamental cycle is then constructed from this zero set. Relevant definitions and a summary of these constructions is given below.

\noindent\\ \textbf{Taming}: The first step in the construction of the virtual fundamental cycle is taming. This procedure is topological in nature. We now give the basic definitions and results regarding taming.
\begin{definition}
Let $\{F_i\}$ be a finite open cover of a compact space $X$. We say that $\{F_i'\}$ is a \textbf{shrinking} of $\{F_i\}$ if $F_i'\sqsubset F_i$ are precompact open sets, $\{F_i'\}$ is still an open cover of $X$, and
\begin{equation}\label{shrinkingeq}
F_I:=\bigcap_{i\in I}F_i \neq \emptyset \quad\Rightarrow\quad F_I':=\bigcap_{i\in I}F_i'\neq \emptyset.
\end{equation}
\end{definition}
One can always find a shrinking of an open cover of a compact Hausdorff space. We can also define a shrinking of a Kuranishi atlas.
\begin{definition}\label{kshrinking}
Let $\mathcal{K}=(\mathbf{K}_I,\widehat{\Phi}_{IJ})_{I,J\in\mathcal{I}_\mathcal{K}, I\subsetneq J}$ be a weak Kuranishi atlas. We say that a weak Kuranishi atlas $\mathcal{K}'=(\mathbf{K}_I',\widehat{\Phi}_{IJ}')_{I,J\in\mathcal{I}_\mathcal{K}', I\subsetneq J}$ is a \textbf{shrinking} of $\mathcal{K}$ if
\begin{enumerate}[(i)]
\item The footprint cover $\{F_i'\}$ is a shrinking of the cover $\{F_i\}$. In particular, $\mathcal{I}_\mathcal{K}=\mathcal{I}_{\mathcal{K}'}$.
\item For each $I\in\mathcal{I}_\mathcal{K}$ the chart $\mathbf{K}_I'$ is a restriction of $\mathbf{K}_I$ to a precompact domain.
\item For $I,J\in\mathcal{I}_\mathcal{K}, I\subsetneq J$, the coordinate change $\widehat{\Phi}_{IJ}'$ is a restriction of $\widehat{\Phi}_{IJ}$.
\end{enumerate}
We write $\mathcal{K}'\sqsubset\mathcal{K}$.
\end{definition}
\begin{definition}
A weak Kuranishi atlas is called \textbf{tame} if for all $I,J,K\in\mathcal{I}_{\mathcal{K}}$ with $I\subset J\subset K$, we have
\begin{enumerate}
\item $\underline{U}_{IJ}\cap \underline{U}_{IK} = \underline{U}_{I(J\cup K)}$.
\item $(\widetilde{\underline{\phi}}_{IJ}\circ \rho_{IJ}^{-1})(\underline{U}_{IK}) = \underline{U}_{JK}\cap \underline{s}_J^{-1}(\underline{U_J\times\widehat{\phi}_{IJ}(E_I)}).$
\end{enumerate}
\end{definition}
\begin{proposition}\cite[Proposition 3.3.5]{mwtop}\label{taming}
Let $\mathcal{K}$ be a weak Kuranishi atlas. Then there is a shrinking $\mathcal{K}'\sqsubset\mathcal{K}$ such that $\mathcal{K}'$ is tame. Moreover, if $\{F_i\}$ are the footprints of $\mathcal{K}$ and $\{F_i'\}$ is a shrinking of $\{F_i\}$, then $\mathcal{K}'$ can be chosen to have footprints $\{F_i'\}$.
\end{proposition}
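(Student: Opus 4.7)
The plan is to construct the tame shrinking by induction on the size of the index sets $I \in \mathcal{I}_\mathcal{K}$, using the given shrinking $\{F_i'\}$ of the footprint cover to dictate how much each domain must shrink. At each stage one must produce precompact domains $U_I' \sqsubset U_I$ and transition subsets $\underline{U}_{IJ}' \subset \underline{U}_I'$ such that the new chart $\mathbf{K}_I'$ has footprint $F_I' = \bigcap_{i \in I} F_i'$ and the two tameness relations hold for every triple $I \subset J \subset K$ at or below the current stage. A useful first reduction is to fix a metric on each intermediate domain $\underline{U}_I$ (using that $U_I$ is a finite dimensional manifold and $\Gamma_I$ is finite, so $\underline{U}_I$ is metrizable).

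To arrange tameness condition $(1)$, I would pick each transition domain $\underline{U}_{IJ}'$ to be a controlled open neighborhood of the compact set $\pi_I(\psi_I^{-1}(F_J'))$ inside $\underline{U}_I'$, for instance an $\varepsilon$-neighborhood with $\varepsilon$ small enough that the neighborhoods do not overshoot the corresponding precompact regions coming from the unshrunk atlas. Since the shrunk footprints satisfy $F_J' \cap F_K' = F_{J\cup K}'$, the intersection $\underline{U}_{IJ}' \cap \underline{U}_{IK}'$ will collapse to exactly $\underline{U}_{I(J\cup K)}'$ once $\varepsilon$ is chosen uniformly small. To deal with condition $(2)$, I would exploit the index condition, which already ensures that $\widetilde{U}_{IJ}$ is an open submanifold of $s_J^{-1}(\widehat{\phi}_{IJ}(E_I))$; thus $(2)$ becomes the matching statement that the shrunk $\widetilde{U}_{IJ}' \cap \rho_{IJ}^{-1}(U_{IK}')$ saturates the intersection of this submanifold with $\underline{U}_{JK}'$. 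I would achieve this by choosing $\underline{U}_{JK}'$ first as the $\varepsilon$-neighborhood of the footprint preimage in $\underline{U}_J'$, and then shrinking $\underline{U}_{IK}'$ if needed so that its image under $\widetilde{\underline{\phi}}_{IJ}\circ\rho_{IJ}^{-1}$ picks up precisely the right piece; the weak cocycle condition guarantees that these images agree up to restriction, so the matching can be made exact on precompact subsets.

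The main obstacle is that the shrinkings at different stages of the induction are coupled: shrinking a domain $U_I'$ may force further shrinking of $U_J'$ for $J \supsetneq I$, while imposing a tameness condition on $(I,J,K)$ may retroactively force the already-chosen $U_I'$ to shrink again. The cleanest way I can see to resolve this is not to iterate naively but to perform all shrinkings simultaneously using a single uniform parameter: choose a metric on each $\underline{U}_I$ and a common $\varepsilon > 0$, and define all $\underline{U}_{IJ}'$ as $\varepsilon$-neighborhoods of the compact sets $\pi_I(\psi_I^{-1}(F_J'))$. Because $\mathcal{I}_\mathcal{K}$ is finite and all relevant footprint preimages are compact, one can pick $\varepsilon$ small enough that finitely many relations — conditions $(1)$ and $(2)$ for each triple $I \subset J \subset K$ — hold at once. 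Verifying that conditions $(1)$ and $(2)$ are stable under taking the $\varepsilon \to 0$ limit, and that the resulting $\mathcal{K}'$ still inherits the weak cocycle compatibility from $\mathcal{K}$, completes the argument; the final clause of the proposition follows because this construction never changes the footprints themselves, only the domains.
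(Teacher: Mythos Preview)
This proposition is not proved in the present paper: it is cited from \cite[Proposition 3.3.5]{mwtop}, and the paper only remarks that the proof passes through the intermediate atlas $\underline{\mathcal{K}}$, viewed as a filtered weak topological atlas, with the taming then lifted to $\mathcal{K}$. So there is no in-paper proof to compare your proposal against; the relevant benchmark is the McDuff--Wehrheim construction.

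Your sketch has the right overall shape (work in the intermediate charts, shrink domains over a prescribed footprint shrinking, control transition domains inductively), but the mechanism you propose for achieving the tameness equalities does not work as stated. Tameness condition~(1) demands the \emph{equality} $\underline{U}_{IJ}'\cap\underline{U}_{IK}'=\underline{U}_{I(J\cup K)}'$. If each $\underline{U}_{IJ}'$ is taken to be an $\varepsilon$-neighborhood of $\pi_I(\psi_I^{-1}(F_J'))$, then no matter how small $\varepsilon>0$ is, the intersection of two such neighborhoods is in general strictly larger than the $\varepsilon$-neighborhood of the intersection of the underlying sets; the inclusion $B_\varepsilon(A)\cap B_\varepsilon(B)\supset B_\varepsilon(A\cap B)$ is typically strict. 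Your claim that the intersection ``will collapse to exactly $\underline{U}_{I(J\cup K)}'$'' for small $\varepsilon$ is therefore false as written, and the same issue arises for condition~(2), which is also an equality. Appealing to an $\varepsilon\to 0$ limit does not help, since you need the identities for a fixed positive $\varepsilon$.

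The actual construction in \cite{mwtop} avoids this by not defining the $\underline{U}_{IJ}'$ as metric neighborhoods at all; instead it builds them inductively (on $|I|$ and then on $|J|$) by explicit intersections and images under the coordinate changes, so that the two tameness equalities hold \emph{by definition} at each stage. The delicate point is showing that the resulting sets are still open and nonempty with the correct footprint, and this is where the careful bookkeeping and the nested family of shrinkings (not a single $\varepsilon$) enters. If you want to repair your approach, you should replace the metric-neighborhood ansatz with this kind of explicit intersection-and-image definition.
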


The procedure of proving Proposition \ref{taming} is known as taming and is topological in nature. In fact, Proposition \ref{taming} is proved by considering the intermediate atlas $\underline{\mathcal{K}}$ (see Definition \ref{intermediatedef}), which is shown in \cite{mwtop} to have the structure of a \textbf{filtered weak topological atlas}. This is a notion of an atlas that allows for more general domains than smooth manifolds (in particular group quotients). A taming for $\mathcal{K}$ is then achieved by lifting a taming of $\underline{\mathcal{K}}$.

\noindent\\ \textbf{Reduction}: The next step in the construction of the virtual fundamental cycle is the process of \emph{reduction}. Before dealing with this process, we need some more preliminaries.
\begin{definition}\label{catdef}
Given a Kuranishi atlas $\mathcal{K}$ define the \textbf{domain category} $\mathbf{B}_\mathcal{K}$ to have object space
\[\textnormal{Obj}_{\mathbf{B}_\mathcal{K}} := \bigsqcup_{I\in\mathcal{I}_\mathcal{K}}U_I\]
and morphism space
\[\textnormal{Mor}_{\mathbf{B}_\mathcal{K}}:=\bigsqcup_{I,J\in\mathcal{I}_\mathcal{K},I\subset J}\widetilde{U}_{IJ}\times \Gamma_I.\]
Here we use the convention $\widetilde{U}_{II}:=U_I$.

The source and target of morphisms are given by
\[(I,J,x,\gamma)\in \textnormal{Mor}_{\mathbf{B}_\mathcal{K}}\Big(\big(I,\gamma^{-1}\rho_{IJ}(x)\big),\big(J,\widetilde{\phi}_{IJ}(x)\big)\Big).\]
Composition is given by
\[(I,J,x,\gamma)\circ(J,K,y,\delta):=\big(I,K,z:=\widetilde{\phi}^{-1}_{IK}\big(\widetilde{\phi}_{JK}(z)\big),\rho_{IJ}^\Gamma(\delta)\gamma\big)\]
where defined.
\end{definition}
We can analogously define the \textbf{obstruction category} $\mathbf{E}_\mathcal{K}$ with objects
\[\textnormal{Obj}_{\mathbf{E}_\mathcal{K}} := \bigsqcup_{I\in\mathcal{I}_\mathcal{K}}U_I\times E_I\]
and morphisms
\[\textnormal{Mor}_{\mathbf{B}_\mathcal{K}}:=\bigsqcup_{I,J\in\mathcal{I}_\mathcal{K},I\subset J}\widetilde{U}_{IJ}\times E_I\times\Gamma_I.\]
There are functors $pr_\mathcal{K}:\mathbf{E}_\mathcal{K}\rightarrow\mathbf{B}_\mathcal{K}, s_\mathcal{K}:\mathbf{B}_\mathcal{K}\rightarrow\mathbf{E}_\mathcal{K}$ obtained from the projection map \mbox{pr$_I:U_I\times E_I\rightarrow U_{I}$} and the section $s_I:U_I\rightarrow U_I\times E_I$. There is also the full subcategory of zero sets $\bigsqcup_{I\in\mathcal{I}_{\mathcal{K}}}\{I\}\times s_I^{-1}(0)\subset \textnormal{Obj}_{\mathbf{B}_\mathcal{K}}$.

We can form the \textbf{topological realization of the category} $\mathbf{B}_\mathcal{K}$, $|\mathbf{B}_\mathcal{K}|$ or just $|\mathcal{K}|$, which is the space formed from the quotient of the objects by the equivalence relation generated by the morphisms. We denote this quotient by the maps $\pi_\mathcal{K}:U_I\rightarrow |\mathcal{K}|$.

The topological realization $|\mathcal{K}|$ can be quite wild, but the taming procedure gives desirable topological properties. For example, if $\mathcal{K}$ is tame, then $|\mathcal{K}|$ is Hausdorff and $\displaystyle\pi_{\underline{\mathcal{K}}}:\faktor{U_I}{\Gamma_I}\rightarrow|\mathcal{K}|$ is a homeomorphism onto its image (see \cite[Theorem 3.1.9]{mwtop}). The full subcategory of zero sets has a realization $|s_\mathcal{K}|^{-1}(0)\subset|\mathcal{K}|$ that is naturally homeomorphic to $X$.

The taming condition also allows one to put a metric on $|\mathcal{K}|$. Suppose $\mathcal{K}, \mathcal{K}'$ are tame atlases such that $\mathcal{K}\sqsubset\mathcal{K}'\sqsubset\mathcal{K}''$ (recall from Definition \ref{kshrinking} that $\sqsubset$ denotes shrinking and means we have a precompact inclusion of domains). Then one can put a metric $d$ on $|\mathcal{K}|$ such that the pullback metric $\underline{d}_I:=(\underline{\pi}_\mathcal{K}|_{\underline{U}_I})^{*}d$ on $\underline{U}_I$ induces the given topology on $\underline{U}_I$. In this situation we call $d$ an \textit{admissible metric}. See \cite[Lemma 3.1.8 and Theorem 3.1.9]{mwtop} for more details regarding metrics.

The above properties are a consequence of the fact that the taming condition simplifies the equivalence relation defining $|\mathcal{K}|$. Specifically, for a tame atlas $\mathcal{K}$, $(I,x)\sim(J,y)$ in $|\mathcal{K}|$ if and only if there are morphisms $(I,x)\rightarrow(I\cup J, z)\leftarrow (J,y)$ for some $z\in U_{I\cup J}$. The reduction procedure further simplifies the equivalence relation.

\begin{definition}\label{reductiondef}
A \textbf{reduction} of a tame Kuranishi atlas $\mathcal{K}$ consists of a tuple of open subsets $V_I\subset U_I$ satisfying:
\begin{enumerate}
\item\label{reduction0} $V_I=\pi^{-1}(\underline{V}_I)$ for each $I\in\mathcal{I}_{\mathcal{K}}$. Hence $V_I$ is $\Gamma_I$-invariant.
\item\label{reduction1} $V_I\sqsubset U_I$, that is $V_I$ is precompact in $U_I$, for all $I\in\mathcal{I}_\mathcal{K}$, and if $V_I\neq\emptyset$, then $V_I\cap s^{-1}_I(0)\neq\emptyset$.
\item\label{reduction2} $\pi_{\mathcal{K}}(\overline{V_I})\cap\pi_{\mathcal{K}}(\overline{V_J})\neq\emptyset$ implies either $I\subset J$ or $J\subset I$.
\item\label{reduction3} The zero set $|s_\mathcal{K}|^{-1}(0)$ is contained in $\bigcup_{I\in\mathcal{I}_{\mathcal{K}}}\pi_{\mathcal{K}}(V_I).$
\end{enumerate}

\begin{proposition}\cite[Proposition 5.3.5]{mwtop}\label{reduction}
Every tame Kuranishi atlas $\mathcal{K}$ has a reduction $\mathcal{V}$.
\end{proposition}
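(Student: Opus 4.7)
The plan is to construct each $V_I$ as a small tubular neighborhood of a compact ``core'' in $U_I\cap s_I^{-1}(0)$ whose combinatorial overlap pattern matches the poset $\mathcal{I}_\mathcal{K}$. Since condition (\ref{reduction0}) forces $V_I=\pi^{-1}(\underline{V}_I)$, it suffices to construct $\Gamma_I$-invariant open sets $\underline{V}_I\subset\underline{U}_I$ in the intermediate atlas and lift. By tameness (after passing to auxiliary shrinkings of $\mathcal{K}$), the realization $|\mathcal{K}|$ carries an admissible metric $d$ whose pullback $\underline{d}_I:=(\underline{\pi}_\mathcal{K}|_{\underline{U}_I})^{*}d$ induces the given topology on $\underline{U}_I$; moreover $\underline{d}_I(x,y)=d(\underline{\pi}_\mathcal{K}(x),\underline{\pi}_\mathcal{K}(y))$ because $\underline{\pi}_\mathcal{K}|_{\underline{U}_I}$ is injective on tame atlases.

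First I would set up the combinatorial data. Since $X$ is compact Hausdorff, choose two nested shrinkings $\{F_i''\}\sqsubset\{F_i'\}\sqsubset\{F_i\}$ of the footprint cover preserving (\ref{shrinkingeq}). For each $I\in\mathcal{I}_\mathcal{K}$ define the closed set
\[Z_I := \overline{F_I''}\setminus\bigcup_{J\not\sim I}F_J' \;\subset\; X,\]
where $J\not\sim I$ means $I\not\subset J$ and $J\not\subset I$. A direct check using $F_I\subset F_J\Leftrightarrow J\subset I$ shows $\{Z_I\}_{I\in\mathcal{I}_\mathcal{K}}$ is a closed cover of $X$ with $Z_I\cap Z_J=\emptyset$ whenever $I\not\sim J$. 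Since $\underline{\psi}_I$ is a homeomorphism onto $F_I\supset Z_I$, the preimage $\underline{C}_I:=\underline{\psi}_I^{-1}(Z_I)\subset\underline{s}_I^{-1}(0)$ is compact.

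Next, fix precompact open sets $\underline{U}_I'\sqsubset\underline{U}_I$ containing $\underline{C}_I$, and define $\underline{V}_I$ to be the intersection of $\underline{U}_I'$ with the $\epsilon_I$-neighborhood of $\underline{C}_I$ in $(\underline{U}_I,\underline{d}_I)$, for constants $\epsilon_I>0$ to be specified; set $V_I:=\pi^{-1}(\underline{V}_I)$. For any choice of $\epsilon_I$, conditions (\ref{reduction0}), (\ref{reduction1}), and (\ref{reduction3}) are immediate: $\Gamma_I$-invariance is built in; precompactness is inherited from $\underline{U}_I'$; the hypothesis $V_I\neq\emptyset$ in (\ref{reduction1}) forces $\underline{C}_I\neq\emptyset$ and hence $V_I\cap s_I^{-1}(0)\neq\emptyset$; and (\ref{reduction3}) holds because the $Z_I$ cover $X$.

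The main obstacle is condition (\ref{reduction2}). For each pair $I\not\sim J$, the compact sets $\pi_\mathcal{K}(\underline{C}_I)$ and $\pi_\mathcal{K}(\underline{C}_J)$ are disjoint in the Hausdorff metric space $(|\mathcal{K}|,d)$, so are separated by some $\delta_{IJ}>0$; finiteness of $\mathcal{I}_\mathcal{K}$ yields a uniform $\delta:=\min_{I\not\sim J}\delta_{IJ}>0$. Because the pullback identity $\underline{d}_I(x,y)=d(\underline{\pi}_\mathcal{K}(x),\underline{\pi}_\mathcal{K}(y))$ converts $\underline{d}_I$-closeness into $d$-closeness of images, choosing each $\epsilon_I<\delta/3$ forces $\pi_\mathcal{K}(\overline{V_I})$ into the $(\delta/3)$-neighborhood of $\pi_\mathcal{K}(\underline{C}_I)$, hence disjoint from $\pi_\mathcal{K}(\overline{V_J})$ for all incomparable $J$. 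The crux of the argument is this transfer between the local metrics $\underline{d}_I$ on each $\underline{U}_I$ and the ambient metric $d$ on $|\mathcal{K}|$; this is exactly the service performed by the tameness hypothesis through the existence of an admissible metric, and without it one cannot promote the nerve-style separation of the combinatorial cores $Z_I\subset X$ to a genuine separation of closures in $|\mathcal{K}|$.
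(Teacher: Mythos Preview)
Your separation argument via the admissible metric is sound, and the overall strategy of thickening cores $\underline{C}_I\subset\underline{s}_I^{-1}(0)$ is close to the approach taken in \cite{mwtop}. However, there is a genuine gap: your closed sets $Z_I$ do \emph{not} cover $X$ in general, so the resulting $V_I$ will fail condition (\ref{reduction3}).

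A two--level nesting $\{F_i''\}\sqsubset\{F_i'\}$ is not enough. Concretely, take $X=[0,1]$ with $F_1=[0,0.7)$, $F_2=(0.3,1]$, shrinkings $F_1'=[0,0.65)$, $F_2'=(0.35,1]$, and $F_1''=[0,0.6)$, $F_2''=(0.4,1]$. Then
\[
Z_{\{1\}}=[0,0.35],\qquad Z_{\{2\}}=[0.65,1],\qquad Z_{\{1,2\}}=\overline{F_1''\cap F_2''}=[0.4,0.6],
\]
and the union misses $(0.35,0.4)\cup(0.6,0.65)$. The point $x=0.37$ lies in $F_2'$ but not in $\overline{F_2''}$, so it is excluded from $Z_{\{1\}}$ without being picked up by $Z_{\{1,2\}}$. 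The claimed ``direct check'' fails precisely because $\overline{F_i''}\subsetneq F_i'$ leaves a gap between the set you remove and the set you add back at the next level.

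The fix, and what the paper actually uses (see Lemma~\ref{coverreductionlem}, quoting \cite[Lemma~5.3.1]{mwtop}), is a whole ladder of nested shrinkings
\[
F_i^0\sqsubset G_i^1\sqsubset F_i^1\sqsubset G_i^2\sqsubset\cdots\sqsubset F_i^N\sqsubset F_i
\]
and the formula
\[
Z_I=\Bigl(\bigcap_{i\in I}G_i^{|I|}\Bigr)\setminus\bigcup_{j\notin I}\overline{F_j^{|I|}},
\]
so that the shrinking level depends on $|I|$. This graded structure is exactly what closes the gaps between consecutive levels of the poset. Once the footprint--level cover reduction is correct, the paper (following \cite{mwtop}) lifts it to the domains by choosing sets $\underline{W}_I$ with the right footprints and then excising neighborhoods of the bad overlaps $Y_{IJ}=\overline{\underline{W}_I}\cap\pi_{\underline{\mathcal{K}}}^{-1}(\pi_{\underline{\mathcal{K}}}(\overline{\underline{W}_J}))$; your metric--thickening argument is a reasonable alternative for that second step, but it cannot repair the failure of the $Z_I$ to cover.
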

\end{definition}

Similar to the taming procedure, the reduction procedure can done for tame topological atlases. Proposition \ref{reduction} is proved by constructing a reduction on the intermediate atlas $\underline{\mathcal{K}}$ and then lifting to $\mathcal{K}$.

\noindent\\ \textbf{Perturbation section}: Given a reduction $\mathcal{V}:=(V_I)_{I\in\mathcal{I}_{\mathcal{K}}}$, we define the \textbf{reduced domain category} $\mathbf{B}_{\mathcal{K}}|_{\mathcal{V}}$ and the \textbf{reduced obstruction category} $\mathbf{E}_{\mathcal{K}}$ to be the full subcategories of $\mathbf{B}_{\mathcal{K}}$ and $\mathbf{E}_{\mathcal{K}}$ with objects $\sqcup_{I\in\mathcal{I}_\mathcal{K}}V_I$ and $\sqcup_{I\in\mathcal{I}_\mathcal{K}}V_I\times E_I$ respectively. Denote $s_{\mathcal{K}}|_{\mathcal{V}}:\mathbf{B}_{\mathcal{K}}|_{\mathcal{V}}\rightarrow \mathbf{E}_{\mathcal{K}}|_{\mathcal{V}}$ the restriction of $s_{\mathcal{K}}$. One would want to find a transverse perturbation functor \mbox{$\nu:\mathbf{B}_{\mathcal{K}}|_{\mathcal{V}}\rightarrow \mathbf{E}_{\mathcal{K}}|_{\mathcal{V}}$} and construct a manifold from the perturbed zero set $(s_{\mathcal{K}}|_{\mathcal{V}}+\nu)^{-1}(0)$. In the absence of isotropy, this is precisely what is done in \cite{mwfund}. In the presence of isotropy, more care is required. The case with isotropy is done in \cite{mwiso}, while the paper \cite{mcdufforbifold} describes the easier case when $X$ is an orbifold and is helpful in illuminating the construction. Following \cite{mwiso}, to deal with isotropy one reduces the problem to the case without isotropy and then appeals to the construction of \cite{mwfund}. To do this, one considers the \textbf{pruned categories} $\mathbf{B}_{\mathcal{K}}|_{\mathcal{V}}^{\setminus\Gamma}$ and $\mathbf{E}_{\mathcal{K}}|_{\mathcal{V}}^{\setminus\Gamma}$. Roughly speaking, these are categories obtained from forgetting the morphisms coming from the isotropy group action so that the only remaining morphisms are those coming from the projections $\rho_{IJ}$; then the pruned categories have trivial isotropy. For precise definitions and statements regarding pruned categories, one can refer to \cite{mwiso}. One then constructs a perturbation functor on the level of the pruned categories. The correct notion of a perturbation functor is the following.

\begin{definition}[\cite{mwfund,mwiso}]\label{perturbdef}
A \textbf{perturbation} of $\mathcal{K}$ is a smooth functor $\nu:\mathbf{B}_{\mathcal{K}}|_{\mathcal{V}}^{\setminus\Gamma}\rightarrow\mathbf{E}_{\mathcal{K}}|_{\mathcal{V}}^{\setminus\Gamma}$ between the pruned domain and obstruction categories of some reduction $\mathcal{V}$ of $\mathcal{K}$, such that $pr_{\mathcal{K}}|_{\mathcal{V}}^{\setminus\Gamma}\circ\nu = id_{\mathbf{B}_{\mathcal{K}}|_{\mathcal{V}}^{\setminus\Gamma}}$.
That is, $\nu=(\nu_I)_{I\in\mathcal{I}_{\mathcal{K}}}$ is given by a family of smooth maps $\nu_I:V_I\rightarrow E_I$ such that $\nu_I\circ\rho_{IJ}=\nu_J$ on $\widetilde{V}_{IJ}:=V_J\cap\rho_{IJ}^{-1}(V_I)$. We say that a perturbation $\nu$ is
\begin{itemize}
\item \textbf{admissible} if we have $d_{y}\nu_J(T_yV_J)\subset \textnormal{im}\widehat{\phi}_{IJ}$ for all $I\subsetneq J$ and $y\in\widetilde{V}_{IJ}$;
\item \textbf{transverse} if $s_I|_{V_I}+\nu_I:V_I\rightarrow E_I$ is transverse to $0$ for each $I\in\mathcal{I}_{\mathcal{K}}$;
\item \textbf{precompact} if there is a  precompact open subset $\mathcal{C}\sqsubset\mathcal{V}$, which itself is a reduction, such that
    \[\pi_{\mathcal{K}}\big(\cup_{I\in\mathcal{I}_{\mathcal{K}}}(s_I|_{V_I}+\nu_I)^{-1}(0)\big)\subset \pi_{\mathcal{K}}(\mathcal{C}).\]
\end{itemize}
\end{definition}

The existence of an admissible, transverse, precompact perturbation is proved in \cite{mwfund}. In fact, they prove the existence of a perturbation with even more refined properties. The construction of this perturbation is a delicate iterative procedure. We give the precise statement of the existence of a perturbation section in Proposition \ref{section}; there we also outline its construction.

The virtual fundamental class is then constructed from the zeros of the perturbed section as described below.

\begin{definition}
Given a perturbation $\nu$, we define the \textbf{perturbed zero set} $|\mathbf{Z}^\nu|$ to be the realization of the full subcategory $\mathbf{Z}^\nu$ of $\mathbf{B}_{\mathcal{K}}|_{\mathcal{V}}^{\setminus\Gamma}$ with object space
\[(s_{\mathcal{K}}|_\mathcal{V}^{\setminus\Gamma}+\nu)^{-1}(0).\]
Thus, it is given by local zero sets $(s_I|_{V_I}+\nu_I)^{-1}(0)$ quotiented by the morphisms of $\mathbf{B}_{\mathcal{K}}|_{\mathcal{V}}^{\setminus\Gamma}$
\end{definition}

Therefore, the existence of a suitable perturbation gives us a zero set that is cut out transversally from $\mathbf{B}_{\mathcal{K}}|_{\mathcal{V}}^{\setminus\Gamma}$. We then show (stated precisely below in Proposition \ref{wbm}) that we can add some of the isotropy morphisms back in, with the tradeoff that we add weights to the corresponding branches of the zero set resulting in the structure of a weighted branched manifold on the maximal Hausdorff quotient\footnote{A topological space $X$ has a unique quotient $|X|_{\mathcal{H}}$ called the \textbf{maximal Hausdorff quotient} which is a Hausdorff that satisfies the universal property that any continuous map from $X$ to a Hausdorff space factors through the quotient map $X\rightarrow|X|_{\mathcal{H}}$. See \cite[Lemma A.2]{mwiso}.} of the perturbed solution set $|(s_{\mathcal{K}}|_\mathcal{V}^{\setminus\Gamma}+\nu)^{-1}(0)| \subset |\mathbf{B}_{\mathcal{K}}|_{\mathcal{V}}^{\setminus\Gamma}|$. This perturbed zero set is not a subset of the realization $|\mathcal{K}|$, but its maximal Hausdorff quotient supports a fundamental class that is represented by a continuous map $|(s_{\mathcal{K}}|_\mathcal{V}^{\setminus\Gamma}+\nu)^{-1}(0)|_{\mathcal{H}}\rightarrow |\mathcal{K}|$.

\begin{proposition}\label{wbm}\cite[Theorem 3.2.8]{mwiso}
Let $\mathcal{K}$ be a tame Kuranishi atlas of dimension $d$ and let $\nu$ be an admissible, transverse, precompact perturbation of $\mathcal{K}$ with respect to nested reductions $\mathcal{C}\sqsubset\mathcal{V}$. Then the perturbed zero set $\mathbf{Z}^\nu$ can be completed to a compact $d$-dimensional weighted nonsingular branched groupoid (see \cite[Definition A.4]{mwiso}) $\mathbf{\widehat{Z}}^\nu$ with the same realization $|\mathbf{\widehat{Z}}^\nu|=|\mathbf{Z}^\nu|$. Additionally, let $|\cdot|_{\mathcal{H}}$ denote the maximal Hausdorff quotient. Then
\[\Lambda^\nu(p):=|\Gamma_I|^{-1}\#\{z\in Z_I|\pi_H(|z|)=p\},\quad p\in |Z_I|_{\mathcal{H}}\]
defines a weighting function $\Lambda^\nu:|\mathbf{Z}^\nu|_{\mathcal{H}}\rightarrow\mathbb{Q}^+$ on the maximal Hausdorff quotient of the perturbed zero set $|\mathbf{Z}^\nu|_{\mathcal{H}}$. Together $(|\widehat{\mathbf{Z}}^\nu|_{\mathcal{H}},\Lambda^\nu)$ has the structure of a compact $d$-dimensional weighted branched manifold (see \cite[Definition A.5]{mwiso}).
\end{proposition}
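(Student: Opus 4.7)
The plan is to reduce to the trivial-isotropy setting treated in \cite{mwfund} by working inside the pruned categories, and then restore the isotropy data as a branching/weighting structure on the maximal Hausdorff quotient. First I would verify the local smooth structure: by transversality of each $s_I|_{V_I}+\nu_I$, the local zero set $Z_I := (s_I|_{V_I}+\nu_I)^{-1}(0)$ is a smooth submanifold of $V_I$ of dimension $d = \dim V_I - \dim E_I$, which is the common dimension of all charts because of the index condition.

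Next I would check that the morphisms in $\mathbf{B}_{\mathcal{K}}|_{\mathcal{V}}^{\setminus\Gamma}$ restrict to well-defined morphisms of the zero sets $Z_I$. For $I\subsetneq J$, admissibility of $\nu$ implies $\nu_J$ takes values in $\widehat{\phi}_{IJ}(E_I)$ along $\widetilde{V}_{IJ}$, and combined with $s_J\circ \widetilde{\phi}_{IJ} = \widehat{\phi}_{IJ}\circ s_I \circ \rho_{IJ}$ (from the coordinate change) and the index condition, one obtains
\[
\rho_{IJ}^{-1}\!\bigl(Z_I\cap U_{IJ}\bigr) \;=\; Z_J\cap \widetilde{V}_{IJ},
\]
so the $\rho_{IJ}$ restrict to local diffeomorphisms/coverings of the zero sets. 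Together with the precompactness hypothesis, which forces the perturbed zero set to be contained in $\pi_{\mathcal{K}}(\mathcal{C})$ with $\mathcal{C}\sqsubset\mathcal{V}$, this shows that $|\mathbf{Z}^\nu|$ is a compact topological space with a natural structure of a (nonsingular) branched groupoid in the sense of \cite[Definition A.4]{mwiso}. At this point the trivial-isotropy results of \cite{mwfund} apply chart-by-chart.

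Now I would restore isotropy to obtain $\widehat{\mathbf{Z}}^\nu$: enlarge $\mathbf{Z}^\nu$ by adjoining, for each $I$, those morphisms $\gamma\in\Gamma_I$ acting on $V_I$, restricted to $Z_I$. Since $\nu_I$ is a map into $E_I$ that need not be $\Gamma_I$-equivariant (we only required equivariance of the section $s_I$), different elements of a $\Gamma_I$-orbit in $Z_I$ may or may not be identified in the realization; this is precisely what produces the branched (rather than orbifold) structure. Using the taming hypothesis, which guarantees that $|\mathcal{K}|$ is Hausdorff and that $\pi_{\underline{\mathcal{K}}}$ is a homeomorphism onto its image on each intermediate chart, the realizations $|\mathbf{Z}^\nu|$ and $|\widehat{\mathbf{Z}}^\nu|$ coincide as sets while $\widehat{\mathbf{Z}}^\nu$ carries the extra structure.

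Finally I would define the weighting function $\Lambda^\nu$ by the formula in the statement and check that it descends to the maximal Hausdorff quotient. Its well-definedness uses the fact that for $I\subset J$ the covering $\rho_{IJ}$ is $(|\Gamma_{J\setminus I}|)$-to-one on $\widetilde{V}_{IJ}$, so the normalization $|\Gamma_I|^{-1}\#\{z\in Z_I\mid \pi_H(|z|)=p\}$ is compatible across coordinate changes; positivity of $\Lambda^\nu$ follows because each point in $|\widehat{\mathbf{Z}}^\nu|_{\mathcal{H}}$ has at least one preimage. The main obstacle I expect is the last step: verifying the local branching axioms of \cite[Definition A.5]{mwiso} for $(|\widehat{\mathbf{Z}}^\nu|_{\mathcal{H}},\Lambda^\nu)$, and in particular checking that near each point the maximal Hausdorff quotient decomposes as a finite union of smooth $d$-dimensional local branches on which $\Lambda^\nu$ is locally constant. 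This requires a careful analysis of how the $\Gamma_I$-actions and the coordinate change coverings $\rho_{IJ}$ interact after passing to the Hausdorff quotient, and is the technical heart that must be imported from \cite[Theorem 3.2.8]{mwiso}.
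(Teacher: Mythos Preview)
The paper does not prove this proposition at all; it is simply quoted from \cite[Theorem 3.2.8]{mwiso} and used as a black box in the sketch of Theorem \ref{theoremb}. Your outline is a reasonable summary of the strategy behind that result and, appropriately, you recognize at the end that the verification of the branching axioms for $(|\widehat{\mathbf{Z}}^\nu|_{\mathcal{H}},\Lambda^\nu)$ is the technical core that must be imported from \cite{mwiso}.

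One small correction to your sketch: you write that admissibility implies $\nu_J$ takes values in $\widehat{\phi}_{IJ}(E_I)$ along $\widetilde{V}_{IJ}$, but Definition \ref{perturbdef} only requires $d_y\nu_J(T_yV_J)\subset \textnormal{im}\,\widehat{\phi}_{IJ}$ there, not that $\nu_J$ itself lands in $E_I$. The functoriality condition $\nu_I\circ\rho_{IJ}=\nu_J$ on $\widetilde{V}_{IJ}$ is what actually gives $\nu_J\in\widehat{\phi}_{IJ}(E_I)$ on that set and hence the matching of zero sets; admissibility is instead used to ensure the local branches glue correctly across $\widetilde{V}_{IJ}$ and that zeros near the core lie in the core. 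This does not affect the overall structure of your argument.
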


The virtual fundamental cycle $[X]_\mathcal{K}^{vir}$ is then constructed from $(|\widehat{\mathbf{Z}}^\nu|_{\mathcal{H}},\Lambda^\nu)$.
\begin{proof}[Sketch of proof of Theorem \ref{theoremb}]
By Proposition \ref{taming}, $\mathcal{K}$ has a tame shrinking and by Proposition \ref{reduction} this tame atlas has a reduction. It is proven in \cite{mwtop} that the taming and reduction procedures are unique up to cobordism. By Proposition \ref{section}, one may construct a perturbation $\nu$ so that as above, the perturbed zero set $(|\widehat{\mathbf{Z}}^\nu|_{\mathcal{H}},\Lambda^\nu)$ is a compact weighted branched manifold. Moreover, there is a natural inclusion $\iota^\nu:|\widehat{\mathbf{Z}}^\nu|_{\mathcal{H}}\rightarrow|\mathcal{V}|\subset|\mathcal{K}|$. It is also proven in \cite{mwfund, mwiso} that the oriented cobordism class of $\iota^\nu$ is independent of choice of $\nu$. Finally, $[X]_{\mathcal{K}}^{vir}$ is defined by taking an appropriate inverse limit in rational \v{C}ech homology.
\end{proof}

\subsection{$C^1$ stratified smooth atlases}\label{c1ss}
In Section \ref{gwcharts}, we will consider the case of $X = \overline{\mathcal{M}}_{0,k}(A,J)$, the compact space of nodal $J$-holomorphic genus zero stable maps on a symplectic manifold in homology class $A$ with $k$ marked points modulo reparametrization. The atlas for $X$ will not be smooth (in the sense that all spaces and maps are smooth), but will instead be \textit{$C^1$ stratified smooth}. This section explains the basics of stratified spaces, their role in Gromov-Witten Kuranishi atlases, and shows that a $C^1$ stratified smooth structure is sufficient for our purposes.

\begin{definition}
A pair $(X,\mathcal{T})$ of a topological space $X$ and a finite partially ordered set $\mathcal{T}$ is called a \textbf{stratified space with strata} $(X_T)_{T\in\mathcal{T}}$ if the following conditions hold:
\begin{enumerate}[(i)]
\item The space $X$ is a disjoint union of the strata.
\item The closure $\overline{X_T}$ is contained in $\bigcup_{S\leq T}X_S$.
\end{enumerate}
\end{definition}
\begin{example}\label{stratex}
$\mathbb{R}^k\times\mathbb{C}^{\underline{n}}$ carries a stratification in the following way:
\begin{enumerate}
\item The set $\mathcal{T}$ is the power set of $\{1,\ldots,n\}$.
\item The stratum $(\mathbb{R}^k\times\mathbb{C}^{\underline{n}})_T$ is given by
\[(\mathbb{R}^k\times\mathbb{C}^{\underline{n}})_T = \{(x,\mathbf{a})\in\mathbb{R}^k\times\mathbb{C}^n ~|~ \mathbf{a} = (a_1,\ldots,a_n), a_i\neq0\iff i\in T\}.\]
\end{enumerate}
\end{example}
\begin{example}
The genus zero Deligne-Mumford space $\overline{\mathcal{M}}_{0,k}$ admits a stratification given by
\begin{equation}\label{dmstrat}
\overline{\mathcal{M}}_{0,k} = \bigsqcup_{0\leq p \leq k-3} \mathcal{M}_p
\end{equation}
where $\mathcal{M}_p$ is the moduli space of curves with exactly $p$ nodes. This is the stratification we will use.

The genus zero Gromov-Witten moduli space $\overline{\mathcal{M}}_{0,k}(A,J)$ also admits a stratification similar to (\ref{dmstrat}):

\[\overline{\mathcal{M}}_{0,k}(A,J) = \bigsqcup_{0\leq p \leq k-3} \mathcal{M}_p(A,J).\]

Deligne-Mumford space $\overline{\mathcal{M}}_{0,k}$ also admits another stratification given by
\[\overline{\mathcal{M}}_{0,k} = \bigsqcup_{\substack{k\textnormal{-labelled}\\ \textnormal{trees } T}} \mathcal{M}_T\]
where $\mathcal{M}_T$ is the moduli space of curves modelled on the labelled tree $T$.
\end{example}

\begin{definition}
A \textbf{stratified continuous} (resp. $\mathbf{C^1}$) map $f:(X,\mathcal{T})\rightarrow (Y,\mathcal{S})$ between stratified spaces (resp. $C^1$ manifolds) is a continuous (resp. $C^1$) map $f:X\rightarrow Y$ such that
\begin{enumerate}[(i)]
\item $f$ maps strata to strata and hence induces a map $f_{*}:\mathcal{T}\rightarrow\mathcal{S}$ such that $f(X_T)\subset Y_{f_{*}T}$.
\item The map $f_{*}$ preserves order, that is $T\leq S\Rightarrow f_{*}T \leq f_{*}S$.\footnote{This is a different definition than that in \cite{notes} where it is required that $f_{*}$ preserves \textit{strict} order. We use this definition so that relevant maps such as the evaluation map $ev:U_{I}\rightarrow M$ are stratified $C^1$.}
\end{enumerate}
\end{definition}

\begin{definition}
Let $f:U\rightarrow \mathbb{R}^k\times\mathbb{C}^{\underline{m}}$ be a stratified $C^1$ map defined on an open subset $U\subset \mathbb{R}^\ell\times \mathbb{C}^{\underline{n}}$. We call $f$ a \textbf{$\mathbf{C^1}$ stratified smooth ($\mathbf{C^1}$ SS) map} if it restricts to a smooth map $U_T=U\cap(\mathbb{R}^\ell\times\mathbb{C}^{\underline{n}})_T\rightarrow (\mathbb{R}^k\times\mathbb{C}^{\underline{m}})_{f_*T}$ on each stratum $T$.
\end{definition}

It is easy to see that the composition of two $C^1$ SS maps is $C^1$ SS. Hence one can define a \textbf{$\mathbf{C^1}$ SS manifold} as having local charts that are open subsets of $\mathbb{R}^k\times\mathbb{C}^{\underline{n}}$ as in Example \ref{stratex} and requiring that transition functions are $C^1$ SS maps. Also of use later will be the notion of a subset $Y$ being a \textbf{$C^1$ SS submanifold} of a $C^1$ SS manifold $X$. Here we say that $Y\subset X$ is a $C^1$ SS submanifold if $Y$ is a $C^1$ SS manifold with its $C^1$ SS structure induced from $X$. See Example \ref{localc1ss} below for examples of local models of $C^1$ SS submanifolds and later Example \ref{sphereexample} and Remark \ref{sssubrem} for more general instances in which $C^1$ SS submanifolds arise.

\begin{example}[Local $C^1$ SS submanifolds]\label{localc1ss}
Here we give two examples of $C^1$ SS submanifolds of the $C^1$ SS manifold $\mathbb{R}^k\times\mathbb{C}^{\underline{n}}$ with stratification described in Example \ref{stratex}. These will later serve as the local models of $C^1$ SS submanifolds as described in Remark \ref{sssubrem}.
\begin{enumerate}
\item\label{localc1ss1} $\{(x,\mathbf{a})~|~a_1=0\}\cong\mathbb{R}^k\times\mathbb{C}^{\underline{n-1}}\subset\mathbb{R}^k\times\mathbb{C}^{\underline{n}}$ is easily seen to be a $C^1$ SS submanifold.
\item\label{localc1ss2} $\{(x,\mathbf{a})~|~a_1=1\}\subset\mathbb{R}^k\times\mathbb{C}^{\underline{n}}$ is also seen to be a $C^1$ SS submanifold.
\end{enumerate}

\end{example}

One of the important properties of $C^1$ SS manifolds is that they admit a good transversality theory.
\begin{definition}\label{sstransverse}
Let $U$ be an open subset of $\mathbb{R}^k\times\mathbb{C}^{\underline{n}}$. Let $f:U\rightarrow \mathbb{R}^{\ell}$ be a $C^1$ SS map and $f(w)=0$ for $w\in U_T$. We say that $f$ is \textbf{SS transverse to $0$ at $w$} if the derivative $d_w^Tf:T_wU_T\rightarrow\mathbb{R}^{\ell}$ is surjective at $w$, where $d_w^T$ is the differential of the smooth restriction of $f$ to the stratum $U_T$. We say that $f$ is \textbf{SS transverse} to $0$ if it is SS transverse to $0$ at all $w\in f^{-1}(0)$.
\end{definition}

One can extend this in the obvious manner to maps $f$ with $C^1$ SS manifolds as domains and targets. There is also the notion of $f$ being SS transverse to a submanifold.

We notice that a map $f$ being SS transverse to $0$ is stronger than being transverse to $0$ in the usual sense. That is $d_w^Tf:T_wU_T\rightarrow\mathbb{R}^n$ being surjective implies that $d_wf:T_wU\rightarrow\mathbb{R}^n$ is surjective. Additionally, being SS transverse is an open condition because transversality for $C^1$ maps is open. SS transversality is also generic (see Lemma \ref{c1tran}). Compare this with the situation of weakly SS (not $C^1$) maps described in \cite{notes}.

The following lemma is obvious, but will be the basis of constructing a virtual fundamental class for $C^1$ SS weak Kuranishi atlases.
\begin{lemma}\label{sstransverselemma}
Let $X$ be a $C^1$ SS manifold and let $W$ be a smooth manifold with $Z\subset W$ a submanifold. If $f:X\rightarrow W$ is a $C^1$ SS map that is SS transverse to $Z$, then $f^{-1}(Z)$ is a $C^1$ SS submanifold of $X$.
\end{lemma}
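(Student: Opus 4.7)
The argument is local: for each $p \in f^{-1}(Z)$ I will construct a $C^1$ SS chart on a neighborhood of $p$ in $X$ in which $f^{-1}(Z)$ appears as a coordinate submanifold of the form given by Example \ref{localc1ss}.

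First, reduce to a level-set problem. Choose smooth coordinates on $W$ near $f(p)$ in which $Z$ is cut out as the zero set of a smooth submersion $\pi : V \to \mathbb{R}^\ell$ with $\ell = \dim W - \dim Z$. Set $g := \pi \circ f$, a $C^1$ SS map from a neighborhood of $p$ in $X$ to $\mathbb{R}^\ell$ with $g(p) = 0$. The chain rule applied stratum-wise shows that $g$ is SS transverse to $0$: on the stratum $U_T$ containing $p$, $d_p^T g = d_{f(p)}\pi \circ d_p^T f$ is surjective because $d\pi$ is surjective and $d_p^T f$ maps onto a complement of $T_{f(p)} Z = \ker d\pi$. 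It therefore suffices to treat the case $W = \mathbb{R}^\ell$, $Z = \{0\}$.

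Now work in a $C^1$ SS chart $U \subset \mathbb{R}^k \times \mathbb{C}^{\underline{n}}$ around $p$, and let $T$ be its stratum, so $T_p U_T = \mathbb{R}^{k+2|T|} \times \{0\}$. Since $d_p^T g : \mathbb{R}^{k+2|T|} \to \mathbb{R}^\ell$ is surjective, I may split coordinates as $(y, z, c) \in \mathbb{R}^\ell \times \mathbb{R}^{k+2|T|-\ell} \times \mathbb{C}^{T^c}$, where the $y$-coordinates are a linear combination of the real smooth coordinates on $U_T$ chosen so that the $\ell \times \ell$ matrix $\partial_y g(p)$ is invertible. Define
\[ \Phi(y, z, c) := \bigl(g(y, z, c),\, z,\, c\bigr). \]
Because $g$ is $C^1$ and the total derivative $d_p \Phi$ is invertible (its block form is upper triangular with diagonal blocks $\partial_y g(p)$, $I$, $I$), the $C^1$ inverse function theorem yields a $C^1$ local inverse $\Phi^{-1}$ on a neighborhood of $p$. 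In these new coordinates, $g^{-1}(0)$ becomes the coordinate slice $\{y = 0\}$, which matches the local model for a $C^1$ SS submanifold from Example \ref{localc1ss}(\ref{localc1ss1}).

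It remains to verify that $\Phi$ and $\Phi^{-1}$ are $C^1$ SS, so the resulting chart on $f^{-1}(Z)$ is compatible with the ambient $C^1$ SS structure on $X$. Stratum preservation is automatic because $\Phi$ leaves the $c$-coordinate fixed and strata in $\mathbb{R}^k \times \mathbb{C}^{\underline{n}}$ are determined solely by the zero pattern of the complex coordinates. Smoothness of $\Phi$ on each stratum $U_{T'}$ with $T' \supseteq T$ is inherited from smoothness of $g|_{U_{T'}}$. For the inverse, a direct computation shows that the restriction $d\Phi|_{TU_{T'}}$ is invertible precisely when $\partial_y g$ is, and this holds throughout a neighborhood of $p$ by continuity of $\partial_y g$; the stratum-wise smooth inverse function theorem then produces a smooth inverse on each stratum. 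This is the main technical point of the lemma, and the only place $C^1$ regularity (as opposed to mere stratified smoothness) is essential---it is what allows the fiberwise smooth implicit function theorems to glue into a single $C^1$ SS chart.
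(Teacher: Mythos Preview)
Your argument is correct. The paper itself does not supply a proof of this lemma---it simply declares the statement ``obvious'' and moves on---so there is nothing to compare against; your write-up fills in precisely the details a careful reader would want, namely the reduction to a level set in $\mathbb{R}^\ell$ followed by a $C^1$ SS inverse function theorem applied to $\Phi=(g,z,c)$.

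One small remark on the citation at the end: the local model you actually obtain is $\{y=0\}\cong\mathbb{R}^{k+2|T|-\ell}\times\mathbb{C}^{\underline{|T^c|}}\subset\mathbb{R}^{k+2|T|}\times\mathbb{C}^{\underline{|T^c|}}$, a slice in the \emph{real} coordinates. This is not literally Example~\ref{localc1ss}(\ref{localc1ss1}), which sets a \emph{complex} coordinate to zero, but it is an even more trivial instance of a $C^1$ SS submanifold (it falls under the second situation of Remark~\ref{sssubrem} with $\ell$ replaced by the full $n$), so the conclusion is unaffected. Your closing observation---that the $C^1$ hypothesis is what forces the stratum-wise smooth inverses to assemble into a single chart---is exactly the point, and is the reason the weaker ``stratified smooth'' notion of \cite{notes} does not suffice here.
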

\begin{example}\label{sphereexample}
The unit sphere $S^{2n-1}\subset\mathbb{C}^{\underline{n}}$ is a $C^1$ SS submanifold. This can either be seen directly by looking at charts for $S^{2n-1}$ or by noting that the map
\begin{align*}
f:\mathbb{C}^{\underline{n}}&\rightarrow\mathbb{R}\\
f(a_1,\ldots,a_n)&=|a_1|^2+\cdots+|a_n|^2
\end{align*}
is SS transverse to $1$ and hence $S^{2n-1}$ is cut out transversally on each stratum.
\end{example}
\begin{remark}\label{sssubrem}
Of course not every submanifold $Y$ of a $C^1$ SS manifold $X$ is a $C^1$ SS submanifold. That is, the stratification that $Y\subset X$ inherits from $X$ does not necessarily give $Y$ the structure of a $C^1$ SS manifold. However, there are two natural situations when this is the case. The first is the situation of Lemma \ref{sstransverselemma} when $Y$ is cut out by a SS transverse map. This is shown in Example \ref{sphereexample} and on the local level in Example \ref{localc1ss}.\ref{localc1ss2}. The second situation is when there are charts $X \supset U \rightarrow \mathbb{R}^k\times \mathbb{C}^{\underline{n}}$ such that $U\cap Y\subset X$ is given by $\mathbb{R}^k\times\mathbb{C}^{\underline{\ell}}\times\{0\}$. On the local level, this is Example \ref{localc1ss}.\ref{localc1ss1}.\hfill$\Diamond$
\end{remark}

One can also define the notion of a \textbf{$\mathbf{C^1}$ SS Kuranishi atlas} on a stratified space $X$ by requiring all conditions to hold in the $C^1$ SS category. In particular, the footprint map should be stratified continuous and the domain $\widetilde{U}_{IJ}\subset s_J^{-1}(E_I)\subset U_J$ should be a $C^1$ SS submanifold. The latter fact follows from the index condition referred to in Definition \ref{transitiondef}. Clearly every smooth atlas is also $C^1$ SS. We are interested in $C^1$ SS atlases because of Theorem \ref{admitthm}, which states that Gromov-Witten moduli spaces admit $C^1$ SS atlases; this atlas will be described in Section \ref{gwcharts}. We now explain the proof of Theorem \ref{c1thm}, which states that $C^1$ SS atlases have enough structure to define a virtual fundamental class. This proof relies on results from Section \ref{sectionsc1}.

\begin{proof}[Proof of Theorem \ref{c1thm}]
Recall from Section \ref{atlases} that the construction of the virtual fundamental cycle takes place in three steps: taming, reduction, and construction of a perturbation section. The taming and reduction constructions carry over with no changes. These procedures are topological in nature and can be done in much in the more general setting of topological atlases, of which $C^1$ SS atlases are examples. The issue is to construct an appropriate perturbation section. Proposition \ref{wbm} and the discussion beforehand explains that it suffices to construct the perturbation section in the absence of isotropy.  It would suffice to construct a $C^1$ transverse perturbation because this would make the perturbed zero set a $C^1$ manifold, which carries a fundamental class. However, we prove the even stronger existence of a $C^1$ SS transverse perturbation. The precise statement of this existence is contained in Proposition \ref{c1section} and its proof is contained in Section \ref{sectionsc1}. By Lemma \ref{sstransverselemma}, the existence of a $C^1$ SS transverse perturbation implies that the perturbed zero set is a $C^1$ SS manifold and hence allows for the construction of the virtual fundamental cycle. This completes the proof of Theorem \ref{c1thm}.
\end{proof}

\subsection{Gromov-Witten charts}\label{gwcharts}
In this section we will give an outline of the construction of the charts for a $C^1$ SS Kuranishi atlas on Gromov-Witten moduli spaces as in Theorem \ref{admitthm}. For more details and proof that these constitute a $C^1$ SS atlas, the reader should refer to \cite{gluingme}. This description of the charts will be used in Sections \ref{constraints} and \ref{gwaxioms}.

Let $(M^{2n},\omega)$ be a compact symplectic manifold and let $J$ be a tame almost complex structure. Let $X = \overline{\mathcal{M}}_{0,k}(A,J)$ be the compact space of nodal $J$-holomorphic genus zero stable maps in homology class $A$ with $k$ marked points modulo reparametrization. Let $d=2n+2c_1(A)+2k-6$. We will think of elements of $X$ as equivalence classes $[\Sigma,\mathbf{z},f]$ where $\Sigma$ is a genus zero nodal Riemann surface, $\mathbf{z}$ are $k$ disjoint marked points, and $f:\Sigma\rightarrow M$ is $J$-holomorphic map in homology class $A$. We will describe charts for an atlas on $X$ in a coordinate free manner. While it is relatively easy to describe the charts in this way, it is difficult to prove anything. A description of charts in explicit coordinates can be found in \cite{gluingme} and \cite[$\S 4$]{notes}. Let $[\Sigma_0,\mathbf{z}_0,f_0]\in X$, we will describe a chart around this element.\\

\noindent \textbf{Isotropy group}: The isotropy group $\Gamma$ is the isotropy of the element $[\Sigma_0,\mathbf{z}_0,f_0]$.\\

\noindent \textbf{Slicing manifold}: Next, we make an auxiliary choice of a \textbf{slicing manifold} $Q$ that is transverse to im$f_0$, disjoint from $f_0(\mathbf{z}_0)$, orientable, and so that the $k$ marked points $\mathbf{z}_0$ together with the $L$ marked points $\mathbf{w}_0:=f_0^{-1}(Q)$ stabilize the domain of $f_0$.\\

\noindent\textbf{Obstruction space}: Let
\begin{equation}\label{delta}
\Delta\subset\overline{\mathcal{M}}_{0,k+L}
\end{equation}
be a small neighborhood of $[\Sigma_0,\mathbf{w}_0,\mathbf{z}_0]\in\overline{\mathcal{M}}_{0,k+L}$ and $\mathcal{C}|_{\Delta}$ denote the universal curve over $\Delta$.

To define the obstruction space, we first choose a map $\lambda:E_0\rightarrow C^\infty(Hom_J^{0,1}(\mathcal{C}|_{\Delta}\times M))$. The space $E_0$ and the map $\lambda$ are chosen to ensure that the domains are cut out transversally near $f_0$ (see (\ref{domainhateqn}) and the discussion afterwards). Then the obstruction space $E$ is defined to be $E:=\Pi_{\gamma\in\Gamma}E_0$, the product of $|\Gamma|$ copies of $E_0$ with elements $\vec{e}:=(e^\gamma)_{\gamma\in\Gamma}$. The isotropy group $\Gamma$ acts by permutation
\begin{equation}\label{actonobs}
(\alpha\cdot\vec{e})^{\gamma} = e^{\alpha\gamma}.
\end{equation}
The map $\lambda$ is then extended to be a $\Gamma$-equivariant map on $E$.\\

\noindent \textbf{Defining $\widehat{U}$}: Before defining the domain $U$, we define $\widehat{U}$. For $(0,[\Sigma_0,\mathbf{w}_0,\mathbf{z}_0,f_0])\in X$, $\widehat{U}$ is defined to be a neighborhood of $(0,[\Sigma_0,\mathbf{w}_0,\mathbf{z}_0,f_0])$ in the following space
\begin{equation}\label{domainhateqn}
\widehat{U}\subset\left\{(\vec{e},[\Sigma,\mathbf{w},\mathbf{z},f])~|~\vec{e}\in E,~ \overline{\partial}_Jf = \lambda(\vec{e})|_{\textnormal{graph}f}\right\}.
\end{equation}
Note that $\bar{\partial}_J$ gives a section of the bundle $C^\infty\big(Hom_J^{0,1}(\mathcal{C}|_{\Delta}\times M)\big)$. Standard Fredholm theory implies that while its linearization $d_f(\bar{\partial}_J)$ may not be surjective, it has a finite dimensional cokernel so that one can choose $\lambda(E_0)$ to surject onto it. The solution set $\widehat{U}$ in (\ref{domainhateqn}) is then a $C^1$ SS manifold. See \cite{notes} for more details on how $E$ and $\lambda$ are chosen.

The space $\widehat{U}$ inherits a smooth structure from (reparametrized) gluing maps. Showing that this can be done is a major subject of \cite{gluingme}.

\begin{remark}\label{maps}
The space $\widehat{U}$ (and later the domain $U$) carry important maps. The first is a forgetful map
\begin{equation}\label{forgetful1}
\pi_0:\widehat{U}\rightarrow \overline{\mathcal{M}}_{0,k}^{new}.
\end{equation}
that takes an element $(\vec{e},[\Sigma,\mathbf{w},\mathbf{z},f])$ to the stabilization of the underlying domain of $f$. It is shown in \cite{gluingme} that $\overline{\mathcal{M}}_{0,k}$ admits a $C^1$ SS structure, denoted $\overline{\mathcal{M}}_{0,k}^{new}$ such that $\pi_0$ is $C^1$ SS (see Proposition \ref{dm1}). We will sometimes forgo the ``$new$" for simplicity as this is the only smooth structure we will use.

The space $\widehat{U}$ also carries an evaluation map
\begin{align*}
ev_k:\widehat{U} &\rightarrow M^k\\
(\vec{e},[\Sigma,\mathbf{w},\mathbf{z},f]) &\mapsto (f(z_1),\ldots,f(z_k)).
\end{align*}
The smooth structure on $\widehat{U}$ is defined is such a way that $ev_k$ is $C^1$ SS.\hfill$\Diamond$
\end{remark}

\noindent \textbf{Isotropy group action}: The group $\Gamma$ is the stabilizer of $[\Sigma_0,\mathbf{z}_0,f_0]$. Each $\gamma\in\Gamma$ is uniquely determined by how it permutes the extra marked points $\mathbf{w}_0$. That is, we consider $\Gamma$ to be a subgroup of the symmetric group $S_L$ and $\Gamma$ acts by
\[\mathbf{w}_0\mapsto \gamma\cdot\mathbf{w}_0:=(w^{\gamma(\ell)})_{1\leq\ell\leq L}.\]
There is also an associated action on the nodes $\mathbf{n}_0$.

We can extend this action to an action on $\Delta$ by
\[\delta=[\mathbf{n},\mathbf{w},\mathbf{z}]\mapsto\gamma^{*}(\delta)=[\gamma\cdot\mathbf{n},\gamma\cdot\mathbf{w},\mathbf{z}].\]
Finally, $\Gamma$ (partially) acts on $\widehat{U}$ by
\begin{equation}\label{coordfreeiso}
(\vec{e},[\mathbf{n},\mathbf{w},\mathbf{z},f])\mapsto(\gamma\cdot\vec{e},[\gamma\cdot\mathbf{n},\gamma\cdot\mathbf{w},\mathbf{z},f])
\end{equation}
where the action $\gamma\cdot\vec{e}$ is given by (\ref{actonobs}). It is proved in \cite{notes} that (\ref{coordfreeiso}) preserves solutions to (\ref{domainhateqn}).\\

\noindent \textbf{Domain}: The domain $U$ is then obtained by making $\widehat{U}$ $\Gamma$-invariant and then cutting down $\widehat{U}$ via the slicing manifold $Q$. By this we mean we may assume $\widehat{U}$ is $\Gamma$-invariant by taking an intersection of the finite orbit of $\widehat{U}$ under $\Gamma$. Then we cut down via the slicing manifold, meaning we make $U$ a $\Gamma$-invariant open neighborhood of $(0,[\Sigma_0,\mathbf{w}_0,\mathbf{z}_0,f_0])$ in the following space:
\begin{equation}\label{slice}
U\subset \left\{(\vec{e},[\Sigma,\mathbf{w},\mathbf{z},f])\in\widehat{U}~|~f(\mathbf{w})\subset Q\right\}.
\end{equation}
The subset $\widehat{U}$ will be of use because the analysis requires working in $\widehat{U}$. (See \cite[Remark 4.1.2]{notes} for more details on the necessity of working with $\widehat{U}$.)

The \textbf{section} and \textbf{footprint} maps are given by
\[s\left(\vec{e},[\Sigma,\mathbf{w},\mathbf{z},f]\right) = \vec{e},\qquad \psi\left(\vec{0},[\Sigma,\mathbf{w},\mathbf{z},f]\right) = [\Sigma,\mathbf{z},f].\]

The construction of the sum charts is very similar. For $I\subset\mathcal{I}_{\mathcal{K}}$, denote $\underline{\vec{e}}:=(\vec{e_i})_{i\in I}, \underline{\mathbf{w}}:=(\mathbf{w}_i)_{i\in I}$. Then $U_I$ is chosen to be a $\Gamma_I$-invariant open set
\[U_I\subset \left\{(\underline{\vec{e}},[\Sigma,\underline{\mathbf{w}},\mathbf{z},f])~|~\underline{\vec{e}}\in E_I, f(\mathbf{w}_i)\subset Q_i, \overline{\partial}_Jf = \lambda(\underline{\vec{e}})|_{\textnormal{graph}f}\right\}\]
such that it has footprint $F_I=\bigcap_{i\in I}F_i$. As before, $U_I$ is obtained from a $\widehat{U}_I$ by cutting down via the slicing manifolds $\{Q_i\}_{i\in I}$.

In this coordinate free language, the coordinate changes $\mathbf{K}_I\rightarrow\mathbf{K}_J$ are given by choosing an appropriate domain $\widetilde{U}_{IJ}\subset U_J$ and then simply forgetting the components $(\mathbf{w_i})_{i\in(J\setminus I)}$ and the $(e_i)_{i\in (J\setminus I)}$ (which are 0 because $\widetilde{U}_{IJ}\subset s_J^{-1}(E_I)$).

\section{Constructing Kuranishi atlases with constraints}\label{constraints}
In Section \ref{gwatlases} we outlined the construction of the virtual fundamental class for the space $X=\overline{\mathcal{M}}_{0,k}(A,J)$ using the results of \cite{mwtop, mwfund, mwiso, gluingme} and Theorem \ref{c1thm}. There are two common variants on the space $X$: considering subsets of $X$ where the domain and/or image of $[\Sigma,\mathbf{z},f]\in X$ are constrained by a homology class. Sections \ref{homconstruction} and \ref{domainconstruction} describe these invariants, the construction of Kuranishi atlases for these invariants, and prove results based on the notion of a \textit{transverse subatlas} introduced in Section \ref{tsubatlases}.

\subsection{Homological constraints on the image}\label{homconstruction}
In this section, we will describe Gromov-Witten invariants with homological constraints in $M$. There are two ways one could define such invariants. The main result of this section will be that these two definitions agree. Let $c_1,\ldots,c_k\in H_{*}(M)$. We wish to consider elements of $X$ that constrain the image of the evaluation map $ev_k([\Sigma,\mathbf{z},f]) := (f(z_1),\ldots,f(z_k))\in M^k$. To do this, we can build a Kuranishi atlas $\mathcal{K}$ for $X$ by the procedure described in Section \ref{gwcharts} and then constrain homologically. That is, one can form $[X]_\mathcal{K}^{vir}\in \check{H}_{d}(X;\mathbb{Q})$, where $d:=$ ind$(A)=2n+2c_1(A)+2k-6$, (using Theorems \ref{admitthm} and \ref{c1thm}) and then push forward this class using the evaluation map $ev_k:X\rightarrow M^k$ to $(ev_k)_{*}([X]_\mathcal{K}^{vir})\in H_{*}(M^k)$. Then define the Gromov-Witten invariant
\[GW_{A,k}^M(c_1,\ldots, c_k) = (ev_k)_{*}([X]_\mathcal{K}^{vir})\cdot (c_1\times\cdots\times c_k) \in \mathbb{Q}\]
using the intersection product in $M^k$ (so this is $0$ unless $\sum_{i=1}^k(2n-\deg c_i) = d$).

Alternatively, one could directly constrain the space $X$. It is enough to consider homology classes that are represented by submanifolds because we will deal only with rational homology; let $Z_c\subset M^k$ be a closed submanifold representing the homology class
\begin{equation}\label{cdef}
c=c_1\times\cdots\times c_k\in H_{\dim{c}}(M^k).
\end{equation}
Consider the subset of $X$
\[X \supset X_c := \overline{\mathcal{M}}_{0,k}(A,J;c) = \{[\Sigma,\textbf{z},f]\in\overline{\mathcal{M}}_{0,k}(A,J)~|~ev_k(f)\in Z_c\}.\]
If $d:=$ ind$(A)$ is the formal dimension $2n+2c_1(A)+2k-6$ of $\overline{\mathcal{M}}_{0,k}(M,J,A)$, then the subset $X_c$ has formal dimension $d-$codim $c = 0$. One could then build an atlas $\mathcal{K}_c$ for $X_c$ directly. The construction of such charts is similar to the construction of the charts for $X$; in fact, roughly speaking, charts for $X_c$ are obtained from charts for $X$ by imposing homological constraints.

We now give an overview of the construction of $\mathcal{K}_c$ (for a more detailed treatment, see \cite{notes}); we will use the description and notion for charts for $X$ from Section \ref{gwcharts}. As with the charts for $X$, given a center point $[\Sigma_0,\mathbf{z}_0,f_0]\in X_c$ we first define
\begin{equation}\label{uchat}
\widehat{U}_c\subset\left\{(\vec{e},[\Sigma,\mathbf{w},\mathbf{z},f])\in\widehat{U} ~|~ ev_k(f) \in Z_c\right\}
\end{equation}
to be an open neighborhood of $(\vec{0},\mathbf{w}_0,\mathbf{z}_0,f_0)$. To ensure that this is a $C^1$ SS manifold of dimension dim$(\widehat{U})-$codim$(c)$, we need the evaluation map $ev_k:\widehat{U}\rightarrow M^k$ to be SS transverse to $Z_c$. The obstruction space $E$ is chosen so that this is true for sufficiently small $\widehat{U}$. Then, as in the construction of the charts for $X$, we make $\widehat{U}_c$ $\Gamma$-invariant by taking an intersection of the finite orbit of $\widehat{U}_c$ under $\Gamma$. Then the domain $U_c$ of a basic chart for $X_c$ is obtained from $\widehat{U}_c$ by cutting down using the slicing manifold $Q$ as in (\ref{slice}). The rest of the construction goes through as before.

In this way we form a $0$-dimensional atlas $\mathcal{K}_c$ for $X_c$. The virtual class $[X_c]_{\mathcal{K}_c}^{vir}$ is then a rational number. The next theorem says that this is the same as the Gromov-Witten invariant $GW^M_{A,k}(c_1,\ldots, c_k)$.

\begin{theorem}\label{homologyagree}
The two approaches of defining Gromov-Witten invariants described above agree. That is
\[GW^M_{A,k}(c_1,\ldots, c_k) := (ev_k)_{*}([X]_\mathcal{K}^{vir})\cdot (c_1\times\cdots\times c_k) = [X_c]_{\mathcal{K}_c}^{vir} \in \mathbb{Q}.\]
\end{theorem}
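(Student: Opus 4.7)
The plan is to identify both sides of the claimed equality as a single weighted signed count of points in a perturbed zero set, using that $\mathcal{K}_c$ is naturally a transverse subatlas of $\mathcal{K}$. Concretely, the construction recalled after equation (\ref{uchat}) exhibits the domain $U_{c,I}$ of a chart of $\mathcal{K}_c$ as (an open neighborhood in) the intersection of $U_I$ with $ev_k^{-1}(Z_c)$, and the obstruction spaces, isotropy groups, sections, and coordinate changes of $\mathcal{K}_c$ are the restrictions of those of $\mathcal{K}$. The SS transversality of $ev_k:U_I\to M^k$ to $Z_c$ built into the construction guarantees, via Lemma \ref{sstransverselemma}, that $U_{c,I}\subset U_I$ is a $C^1$ SS submanifold of codimension $\textnormal{codim}\,c$. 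I would first check formally that these compatibilities are preserved under taming and reduction, so that a tame shrinking $\mathcal{K}'$ of $\mathcal{K}$ with reduction $\mathcal{V}=(V_I)$ restricts to a tame shrinking $\mathcal{K}_c'$ of $\mathcal{K}_c$ with reduction $\mathcal{V}_c=(V_I\cap ev_k^{-1}(Z_c))$.

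The next step is to construct a single perturbation $\nu$ of $\mathcal{K}$ that serves both atlases simultaneously: $\nu$ must be admissible, transverse, and precompact as a perturbation of $\mathcal{K}$ in the sense of Definition \ref{perturbdef}, and in addition the restriction $\nu_c=(\nu_I|_{V_{c,I}})$ must be admissible, transverse, and precompact as a perturbation of $\mathcal{K}_c$. Admissibility and precompactness for $\nu_c$ follow from the corresponding properties of $\nu$, but transversality for $\nu_c$ is an additional requirement: it is equivalent to asking that $ev_k$ restrict to an SS transverse map to $Z_c$ on the $C^1$ SS submanifold $(s_I+\nu_I)^{-1}(0)\subset V_I$. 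Since $ev_k$ is SS transverse to $Z_c$ on the whole of $V_I$, this is a generic condition, and I would adapt the iterative perturbation construction outlined in Section \ref{sections} (and carried out in the $C^1$ SS setting in Section \ref{sectionsc1}) so that at each inductive stage the correction restores transversality of both $s_I+\nu_I$ and $(s_I+\nu_I)|_{V_{c,I}}$ while preserving admissibility.

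Granting such a $\nu$, the perturbed zero set $|\widehat{\mathbf{Z}}^\nu|_\mathcal{H}$ is a compact $d$-dimensional weighted branched $C^1$ SS manifold, carrying a continuous map to $|\mathcal{K}|\cong X$ that represents $[X]_\mathcal{K}^{vir}$, and $|\widehat{\mathbf{Z}}^{\nu_c}|_\mathcal{H}$ is a compact $0$-dimensional weighted branched manifold representing $[X_c]_{\mathcal{K}_c}^{vir}$. By construction $|\widehat{\mathbf{Z}}^{\nu_c}|_\mathcal{H}$ is identified with the transverse intersection of $|\widehat{\mathbf{Z}}^\nu|_\mathcal{H}$ with $ev_k^{-1}(Z_c)$, with weights and signs inherited chart-by-chart from $\Lambda^\nu$ and the orientations. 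Applying the geometric description of the intersection pairing in rational \v{C}ech homology, $(ev_k)_*[X]_\mathcal{K}^{vir}\cdot(c_1\times\cdots\times c_k)$ is then the signed weighted count of points in $|\widehat{\mathbf{Z}}^{\nu_c}|_\mathcal{H}$, which equals $[X_c]_{\mathcal{K}_c}^{vir}$.

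The main obstacle is the simultaneous transversality construction of the second paragraph. The iterative procedure of \cite{mwfund,mwiso} builds $\nu$ by induction on $I\in\mathcal{I}_\mathcal{K}$, at each stage extending the partial perturbation and restoring transversality through a generic correction supported near the stratum being added while respecting the admissibility constraint $d_y\nu_J(T_yV_J)\subset\textnormal{im}\,\widehat{\phi}_{IJ}$ and the $\Gamma$-equivariance inherent in the pruned category framework. Carrying out this induction so that the corrections also keep $(s_I+\nu_I)|_{V_{c,I}}$ SS transverse is precisely the notion of transverse subatlas developed in Section \ref{tsubatlases}, and the analytic refinement of the perturbation construction needed for this is the content deferred to Section \ref{tsections}.
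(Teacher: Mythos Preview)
Your proposal is correct and follows essentially the same approach as the paper: establish that $\mathcal{K}_c$ is a ($C^1$ SS) transverse subatlas of $\mathcal{K}$ (Lemma \ref{xcit}), carry out compatible taming, reduction, and perturbation (Proposition \ref{c1tprop}, relying on Lemmas \ref{ttaming}, \ref{treduction}, \ref{tsection}), and then identify $|\mathbf{Z}^{\nu_c}|_{\mathcal{H}}$ with the transverse preimage $ev_k^{-1}(Z_c)$ inside $|\mathbf{Z}^{\nu}|_{\mathcal{H}}$ to match the intersection product with the direct count. You have also correctly located the main technical input---the simultaneous transversality of the perturbation for both $\mathcal{K}$ and $\mathcal{K}_c$---and deferred it to exactly the places the paper does.
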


In order to prove this theorem, we introduce the notion of a \emph{transverse subatlas} (of which $\mathcal{K}_c$ will be an example, see Lemma \ref{xcit}) and prove basic results. Theorem \ref{homologyagree} will then easily follow from this machinery and is proved at the end of Section \ref{tsubatlases}.

\subsection{Transverse subatlases}\label{tsubatlases}
This section defines and proves basic properties of transverse subatlases. One should think of a transverse subatlas as the appropriate notion for an atlas cut out transversally by an evaluation map; the atlas $\mathcal{K}_c$ of Section \ref{homconstruction} (see (\ref{uchat})) is the prototypical example. For simplicity we will begin by working with smooth atlases before discussing the necessary changes for $C^1$ SS atlases. The main result is the following.

\begin{proposition}\label{tprop}
Let $\mathcal{K}_c$ be a transverse subatlas of $\mathcal{K}$. Then the procedures of taming, reduction, and construction of the perturbation section for each atlas can be done in a such a way that the perturbed zero set $|\mathbf{Z}^{\nu_c}|_{\mathcal{H}}$ is a compact weighted branched submanifold of the perturbed zero set $|\mathbf{Z}^{\nu}|_{\mathcal{H}}$ with each branch cut out transversally by the evaluation map $f$. That is on each branch the evaluation map $f:|\mathbf{Z}^{\nu}|_{\mathcal{H}}\rightarrow N$ is transverse to the submanifold $C\subset N$ in Definition \ref{tdefinition} and
\[|\mathbf{Z}^{\nu_c}|_{\mathcal{H}}=f^{-1}\left(C\right).\]
\end{proposition}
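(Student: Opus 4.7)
The plan is to construct compatible tamings, reductions, and perturbations for $\mathcal{K}$ and $\mathcal{K}_c$ simultaneously, exploiting the fact that each domain $U_{c,I}$ of $\mathcal{K}_c$ sits inside the corresponding domain $U_I$ of $\mathcal{K}$ as the transverse preimage of $C$ under a chart-level evaluation map $ev_I \colon U_I \to N$. With that compatibility in place, the conclusion follows by pulling back the transverse submanifold $C$ to the perturbed zero set.

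\textbf{Taming and reduction.} First I would choose a tame shrinking $\mathcal{K}'\sqsubset\mathcal{K}$ via Proposition \ref{taming}, and then define $\mathcal{K}_c'$ by intersecting every domain, subdomain $\widetilde U_{IJ}$, and footprint with the corresponding $U_{c,I}$. Because the two taming identities in the definition of tameness are purely set-theoretic conditions on the images of the $\underline U_{IJ}$ and on $\underline s_J^{-1}(\underline{U_J\times \widehat\phi_{IJ}(E_I)})$, and because intersecting with the $\Gamma_I$-invariant, $C^1$ SS submanifolds $U_{c,I}=ev_I^{-1}(C)$ commutes with these constructions, the result is a tame atlas for $X_c$. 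I would then invoke Proposition \ref{reduction} to produce a reduction $\mathcal{V}$ of $\mathcal{K}'$ and set $V_{c,I}:=V_I\cap U_{c,I}$, verifying directly that the four reduction axioms (in particular the precompactness in Definition \ref{reductiondef}\eqref{reduction1} and the separation condition \eqref{reduction2}) survive intersection with the closed submanifolds $U_{c,I}$.

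\textbf{Compatible perturbation.} The main step is to build a single perturbation $\nu$ of $\mathcal{K}$ such that $\nu_c:=(\nu_I|_{V_{c,I}})_{I\in\mathcal{I}_\mathcal{K}}$ is an admissible, transverse, precompact perturbation of $\mathcal{K}_c$. Admissibility and the cocycle relation $\nu_I\circ\rho_{IJ}=\nu_J$ are preserved automatically under restriction. The real content is that transversality of $s_I+\nu_I \colon V_I\to E_I$ does not by itself give transversality of the restricted section $(s_I+\nu_I)|_{V_{c,I}}$; what we need is that the map
\[
(s_I+\nu_I,\, ev_I)\colon V_I \longrightarrow E_I\times N
\]
is transverse to $\{0\}\times C$ on $V_I$. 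Since $ev_I$ is already SS transverse to $C$ on $V_I$ by the very definition of a transverse subatlas, this is an open and dense condition on the perturbation datum at each stage of the iterative construction of Proposition \ref{section}. I would therefore run that iterative construction while enlarging the transversality requirement to include transversality of $(s_I+\nu_I,ev_I)$ to $\{0\}\times C$, checking at each inductive step that the space of admissible perturbations achieving this strengthened transversality is still open and dense (a Sard--Smale / parametric transversality argument) and that the precompactness of the zero set of $s+\nu$ descends to the closed subset cut out by $ev$.

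\textbf{Conclusion.} With $\nu$ and $\nu_c$ in hand I would apply Proposition \ref{wbm} to both, obtaining weighted branched manifolds $(|\widehat{\mathbf{Z}}^\nu|_{\mathcal{H}},\Lambda^\nu)$ and $(|\widehat{\mathbf{Z}}^{\nu_c}|_{\mathcal{H}},\Lambda^{\nu_c})$. The strengthened transversality at the previous step says that on each local branch $(s_I+\nu_I)^{-1}(0)\subset V_I$ the induced evaluation map is transverse to $C$, so chart-by-chart $(s_I+\nu_I)^{-1}(0)\cap V_{c,I}=ev_I^{-1}(C)\cap (s_I+\nu_I)^{-1}(0)$ is a submanifold cut out transversally by $ev_I$; the weights $\Lambda^{\nu_c}$ agree with the restriction of $\Lambda^\nu$ since the isotropy groups $\Gamma_I$ act compatibly on $V_I$ and $V_{c,I}$. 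Passing to maximal Hausdorff quotients and using that $f$ is the realization of the $ev_I$'s yields $|\mathbf{Z}^{\nu_c}|_{\mathcal{H}}=f^{-1}(C)$ as a weighted branched submanifold of $|\mathbf{Z}^{\nu}|_{\mathcal{H}}$.

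\textbf{Main obstacle.} The hard part is the perturbation step: the McDuff--Wehrheim construction of a transverse perturbation is an intricate induction over the poset $\mathcal{I}_\mathcal{K}$ that carefully manages admissibility, the pruned-category formalism, and precompact control, and I must thread the additional submanifold transversality through that induction without breaking any of these. Verifying that the openness/density of the strengthened condition holds simultaneously on overlaps $\widetilde V_{IJ}$ where the perturbation is constrained by $\nu_I\circ\rho_{IJ}=\nu_J$, and that the strengthened condition is preserved under the pruning procedure used to handle isotropy, is where the real work will lie; the details of this extension of Proposition \ref{section} would be deferred to Section \ref{tsections}.
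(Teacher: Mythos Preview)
Your overall architecture matches the paper's: establish compatible tamings (Lemma~\ref{ttaming}), compatible reductions (Lemma~\ref{treduction}), and compatible perturbations (Lemma~\ref{tsection}), then read off the conclusion branch by branch. You also correctly flag the perturbation step as the hard one, and your reformulation via transversality of $(s_I+\nu_I,\,ev_I)$ to $\{0\}\times C$ is equivalent to the extra condition the paper threads through the induction in Section~\ref{tsections}.

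There is, however, a genuine gap in your taming and reduction steps. You propose to tame and reduce $\mathcal{K}$ first and then restrict to the $U_I^c$, but this does not in general produce a shrinking or a reduction of $\mathcal{K}_c$. For taming: an arbitrary footprint shrinking $\{F_i'\}$ of $\{F_i\}$ can have $F_I'\cap X_c=\emptyset$ even when $F_I\cap X_c\neq\emptyset$, so the intersected footprints fail condition~(\ref{shrinkingeq}) and $\mathcal{I}_{\mathcal{K}_c'}\subsetneq\mathcal{I}_{\mathcal{K}_c}$. For reduction: an arbitrary $V_I$ may meet $U_I^c$ only away from $(s_I^c)^{-1}(0)$, so $V_{c,I}:=V_I\cap U_I^c$ can be nonempty yet miss the zero set, violating axiom~(\ref{reduction1}) of Definition~\ref{reductiondef}; this axiom is precisely the one you cannot ``verify directly'' because it does not survive naive intersection.

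The paper handles both issues by working in the opposite direction. In Lemma~\ref{ttaming} one first chooses a tame shrinking $\mathcal{K}_c''$ of $\mathcal{K}_c$, then engineers a footprint shrinking of $\mathcal{K}$ that agrees with it on $X_c$ (using thickenings $H_I$ of the $\mathcal{K}_c''$-footprints and excising a collar of $X_c$ from an arbitrary shrinking of $\{F_i\}$), and only then applies Proposition~\ref{taming} with these prescribed footprints. In Lemma~\ref{treduction} one likewise first fixes a reduction $\mathcal{V}_c$ of $\mathcal{K}_c$ and a compatible cover reduction on the footprint level (Lemma~\ref{footprintreduction}), then builds $\underline{W}_I\subset\underline{U}_I$ around $\underline{V}_I^c$ using normal neighborhoods with respect to an admissible metric, and finally checks that the pruning step $\underline{V}_I:=\underline{W}_I\setminus\bigcup_{J\notin C(I)}B(Y_{IJ})$ in the proof of Proposition~\ref{reduction} does not remove any points of $\underline{U}_I^c$. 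Condition~(\ref{conditioniv}) of Definition~\ref{tdefinition} is what makes this possible, by keeping the charts with $I\notin\mathcal{I}_{\mathcal{K}_c}$ uniformly away from $X_c$. Your ``restrict from $\mathcal{K}$'' strategy would need to be replaced by this ``extend from $\mathcal{K}_c$'' strategy for the argument to go through.
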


To prove Proposition \ref{tprop} we must establish relative versions of the taming, reduction, and perturbation section constructions. Precise statements of these results can be found in Lemmas \ref{ttaming}, \ref{treduction}, \ref{tsection}. Lemmas \ref{ttaming} and \ref{treduction} will be proved in this section, while the proof of Lemma \ref{tsection} is deferred to Section \ref{tsections}.

\begin{definition}\label{subatlasdeff}
Let $X_c$ be a closed subset of a compact metric space $X$. We say that an atlas $\displaystyle \mathcal{K}_c=(\mathbf{K}_I^c,\widehat{\Phi}_{IJ}^c)_{I,J\in\mathcal{I}_{\mathcal{K}_c},I\subsetneq J}$ for $X_c$ is a \textbf{subatlas of codimension $d_c$} of an atlas $\displaystyle \mathcal{K}=(\mathbf{K}_I,\widehat{\Phi}_{IJ})_{I,J\in\mathcal{I}_{\mathcal{K}},I\subsetneq J}$ for $X$ if the following conditions hold:
\begin{enumerate}[(i)]
\item There is an injection of the indexing set $\mathcal{I}_{\mathcal{K}_c}$ into the indexing set $\mathcal{I}_\mathcal{K}$. For simplicity we will view $\mathcal{I}_{\mathcal{K}_c}$ as a subset $\mathcal{I}_{\mathcal{K}_c}\subset\mathcal{I}_\mathcal{K}$.
\item The isotopy group $\Gamma_I^c$ of $\mathbf{K}_I^c$ is the same as the isotropy group $\Gamma_I$ of $\mathbf{K}_I$.
\item For $I\in\mathcal{I}_{\mathcal{K}_c}$, the domain $U_I^c$ of the chart $\mathcal{K}_I^c$ is a closed $\Gamma_I$-invariant smooth submanifold of codimension $d_c$ of the domain $U_I$ of $\mathbf{K}_I$.
\item The obstruction space $E_I^c$  of $\mathbf{K}_I^c$ is the same as the obstruction space $E_I$ of $\mathbf{K}_I$.
\item The section $s_I:U_I\rightarrow E_I$ agrees with the section $s_I^c$ on $U_I^c\subset U_I$, that is $s_I^c = s_I|_{U_I^c}$.
\item The footprint map $\psi_I:s_I^{-1}(0)\rightarrow X$ agrees with the footprint map $\psi_I^c:(s_I^c)^{-1}(0)\rightarrow X_c$ on $(s_I^c)^{-1}(0)\subset s_I^{-1}(0)$. Moreover, $F_I\cap X_c = F_I^c$.
\item\label{tcoordinatechange} The coordinate changes $\widehat{\Phi}_{IJ}^c$ are given by the restriction of $\widehat{\Phi}_{IJ}$ to $U_{IJ}^c \subset U_{IJ}$. More precisely, $\widetilde{U}_{IJ}^c$ is a closed smooth submanifold of $\widetilde{U}_{IJ}$ and
    \[\rho_{IJ}^c = \rho_{IJ}|_{\widetilde{U}_{IJ}^c}.\]
\end{enumerate}
\end{definition}
\begin{definition}\label{tdefinition}
Let $X_c$ be a closed subset of a compact metric space $X$. Let $\mathcal{K}_c$ be an atlas for $X_c$ that is a subatlas of codimension $d_c$ of an atlas $\mathcal{K}$ for $X$. The subatlas $\mathcal{K}_c$ is called a \textbf{transverse subatlas} if the following conditions hold:
\begin{enumerate}[(i)]
\item The charts $\mathbf{K}_I$ of $\mathcal{K}$ carry a smooth ``evaluation map" $f_I:U_I\rightarrow N$ to a smooth manifold N.
\item\label{evcoordinatechange} The evaluation map $f_I$ respects the coordinate changes $\widehat{\Phi}_{IJ}$ and the group action of $\Gamma_I$ in the sense that $f_I\circ\rho_{IJ} = f_J$ on $\widetilde{U}_{IJ}$ and $f_I\circ\gamma = f_I$ for all $\gamma\in\Gamma_I$.
\item\label{conditioniii} There is a closed smooth submanifold $C\subset N$ of codimension $d_c$ such that $f_I\pitchfork C$ and the domain $U_I^c$ of the chart $\mathcal{K}_I^c$ is the closed smooth submanifold $U_I^c = f_I^{-1}(C)$.
\item\label{conditioniv} There is an $\varepsilon>0$ such that if $I\notin\mathcal{I}_{\mathcal{K}_c}$, then $f_I(U_I)\subset N$ lies outside an $\varepsilon$-neighborhood of $C\subset N$.
\end{enumerate}
\end{definition}
\begin{remark}
\begin{enumerate}[(a)]\label{tremark}
\item Condition $(\ref{evcoordinatechange})$ of Definition \ref{tdefinition} implies that $X$ carries an evaluation map $f_X:X\rightarrow N$ defined by the inverses of the footprint maps. Moreover, if $\mathcal{K}$ is a Kuranishi atlas so that the topological realization $|\mathcal{K}|$ is defined (see Definition \ref{catdef} and the discussion afterwards for more about $|\mathcal{K}|$), then $|\mathcal{K}|$ carries an evaluation map $f_{|\mathcal{K}|}:|\mathcal{K}|\rightarrow N$. The maps fit into the following commutative diagram
     \[\xymatrix{
X \ar[r]^{\iota_{\mathcal{K}}} \ar[rd]_{f_X} &
|\mathcal{K}| \ar[d]^{f_{|\mathcal{K}|}}\\
&N}\]
where $\iota_{\mathcal{K}}:X\hookrightarrow|\mathcal{K}|$ is the natural inclusion. For this reason, we will sometimes abuse notation and refer to the evaluation map $f$ without specifying its domain. Similarly, the rest of Definition \ref{tdefinition} implies that analogous statements hold true for the atlas $\mathcal{K}_c$ with the target of the evaluation map being $C$.
\item In the terminology of \cite[$\S 4.2$]{notes}, a transverse subatlas $\mathcal{K}_c$ of $\mathcal{K}$ induces an \textbf{embedding} of $\mathcal{K}_c$ to $\mathcal{K}$. A transverse subatlas has the additional properties that the obstruction bundles and isotropy groups agree, $\mathcal{K}$ carries an evaluation map $f$ that defines $\mathcal{K}_c$ (by Condition $(\ref{conditioniii}))$, and
    \[I\in\mathcal{I}_\mathcal{K},\quad F_I\cap X_c \neq \emptyset \iff I\in \mathcal{I}_{\mathcal{K}_c}\]
    (by Condition $(\ref{conditioniv}))$. Conditions $(\ref{conditioniii})$ and $(\ref{conditioniv})$ are used to separate the atlas $\mathcal{K}_c$ from the rest of $\mathcal{K}$ and are crucial to the geometric arguments we will make later in this section.

\item An elementary case when a transverse subatlas appears is when extending the atlas for a closed submanifold of a compact manifold to an atlas on the entire manifold. Here we think of building an atlas for a manifold with one chart and a trivial obstruction space. More generally, the same is true for orbifolds and suborbifolds, where an orbifold is the realization of a Kuranishi atlas with trivial obstruction space (see \cite[$\S 5$]{notes}).
\end{enumerate}
\end{remark}

We now show that the operations on atlases described in Section \ref{atlases} that McDuff and Wehrheim use to construct the virtual fundamental cycle extend to the relative case.

\begin{lemma}\label{ttaming}
Let $\mathcal{K}_c$ be a transverse subatlas of an atlas $\mathcal{K}$. Then there is a tame shrinking $\mathcal{K}'$ of $\mathcal{K}$ such that the domains
\begin{equation}\label{ttamingint}
(U_I^c)' := U_I'\cap U_I^c,\qquad (U_{IJ}^c)' := U_{IJ}'\cap U_I^c
\end{equation}
define a tame shrinking $\mathcal{K}_c'$ of $\mathcal{K}_c$. In particular, $\mathcal{K}_c'$ is a transverse subatlas of $\mathcal{K}'$ and each domain $(U_I^c)'$ is cutout transversally from the domain $U_I'$ by the same evaluation map that defines $\mathcal{K}_c$ as a transverse subatlas of $\mathcal{K}$.
\end{lemma}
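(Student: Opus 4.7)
The plan is to run the standard taming procedure (Proposition \ref{taming}) on $\mathcal{K}$ while choosing the shrinking carefully so that intersecting each shrunk domain with the corresponding submanifold of $\mathcal{K}_c$ yields a tame shrinking of $\mathcal{K}_c$. The key structural observation is that intersection with the $\Gamma_I$-invariant submanifold $U_I^c \subset U_I$ commutes with the operations of restriction and coordinate change: by Definition \ref{subatlasdeff}(\ref{tcoordinatechange}), the coordinate changes of $\mathcal{K}_c$ are literal restrictions of those of $\mathcal{K}$, and by Definition \ref{tdefinition}(\ref{conditioniii}) the domains $U_I^c = f_I^{-1}(C)$ are compatible with $\rho_{IJ}$ because $f_I \circ \rho_{IJ} = f_J$. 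So once the footprints are arranged correctly, the tameness of $\mathcal{K}_c'$ will follow by intersecting the tameness equalities for $\mathcal{K}'$ with $\underline{U}_I^c$.

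First I would arrange the footprint shrinking. Condition (\ref{conditioniv}) of Definition \ref{tdefinition} guarantees that $F_i \cap X_c \neq \emptyset$ exactly when $i \in \mathcal{I}_{\mathcal{K}_c}$, so a shrinking $\{F_i'\}$ of $\{F_i\}$ automatically restricts to a precompact open cover $\{F_i' \cap X_c\}_{i\in\mathcal{I}_{\mathcal{K}_c}}$ of $X_c$. The nontrivial requirement is the intersection condition (\ref{shrinkingeq}) applied simultaneously in $X$ and in $X_c$: namely, for each $I \in \mathcal{I}_{\mathcal{K}}$ with $F_I \neq \emptyset$ we need $F_I' \neq \emptyset$, and for each $I \in \mathcal{I}_{\mathcal{K}_c}$ with $F_I^c := F_I \cap X_c \neq \emptyset$ we need $(F_I^c)' := F_I' \cap X_c \neq \emptyset$. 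Both are achieved by the usual trick: pick a point in each of the finitely many nonempty intersections (in $X$ and in $X_c$) and shrink the $F_i'$ so as to retain these points. Apply Proposition \ref{taming} to produce a tame shrinking $\mathcal{K}'$ of $\mathcal{K}$ with footprints $\{F_i'\}$.

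Next I would define $\mathcal{K}_c'$ by the formulas (\ref{ttamingint}) together with $(\widetilde{U}_{IJ}^c)' := \widetilde{U}_{IJ}' \cap U_J^c$ and check that this is a weak Kuranishi atlas. The domains $(U_I^c)' = U_I' \cap U_I^c$ are open in $U_I^c$, $\Gamma_I$-invariant, have obstruction space $E_I$ and section $s_I|_{(U_I^c)'}$, and have footprint $(F_I^c)'$. Condition (\ref{conditioniv}) ensures that no new indices appear, i.e.\ $(U_I^c)' = \emptyset$ for $I \notin \mathcal{I}_{\mathcal{K}_c}$. The coordinate changes, being restrictions to $(\widetilde{U}_{IJ}^c)'$ of the coordinate changes of $\mathcal{K}'$, automatically satisfy the intertwining relations and the index condition; the group covering structure restricts because $U_J^c$ is $\Gamma_J$-invariant and $\rho_{IJ}(U_J^c \cap \widetilde{U}_{IJ}') \subset U_I^c$ by $f_I \circ \rho_{IJ} = f_J$. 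The weak cocycle condition for $\mathcal{K}_c'$ is inherited from that of $\mathcal{K}'$ by intersection.

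Finally, the two tameness identities for $\mathcal{K}_c'$ are obtained by intersecting the corresponding identities for $\mathcal{K}'$ with $\underline{U}_I^c$ (respectively $\underline{U}_J^c$), using that $(\underline{U}_{IJ}^c)' = \underline{U}_{IJ}' \cap \underline{U}_I^c$ and $(\underline{\widetilde{U}}_{IJ}^c)' = \underline{\widetilde{U}}_{IJ}' \cap \underline{U}_J^c$ by definition. The statement that $\mathcal{K}_c'$ is a transverse subatlas of $\mathcal{K}'$ then reduces to checking the four conditions of Definition \ref{tdefinition} with the same evaluation maps $f_I$ and the same submanifold $C$, all of which transfer verbatim from $\mathcal{K}_c \subset \mathcal{K}$ because the $f_I$ are unchanged and transversality to $C$ is preserved on open subsets. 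I expect the main obstacle to be the bookkeeping in the first step: ensuring that a single choice of shrinking $\{F_i'\}$ simultaneously realizes (\ref{shrinkingeq}) in $X$ and in $X_c$; this is elementary but requires the simultaneous retention of finitely many basepoints chosen from both index sets.
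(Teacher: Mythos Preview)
Your argument is correct and reaches the same conclusion as the paper, namely that once a tame shrinking $\mathcal{K}'$ of $\mathcal{K}$ with suitably chosen footprints exists, the intersected atlas $\mathcal{K}_c'$ is automatically tame because the tameness identities for $\mathcal{K}'$ restrict to those for $\mathcal{K}_c'$ upon intersecting with $\underline{U}_I^c$.

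The difference lies in how the footprint shrinking $\{F_i'\}$ is produced. The paper first applies Proposition~\ref{taming} to $\mathcal{K}_c$ alone, obtaining an auxiliary tame shrinking $\mathcal{K}_c''$ with footprints $\{F_i^{c,\prime\prime}\}$; it then thickens each $F_i^{c,\prime\prime}$ to an open set $H_i\subset F_i$ and sets $F_i':=(G_i\setminus\overline{B_\delta(X_c)})\cup H_i$ for an arbitrary shrinking $\{G_i\}$ of $\{F_i\}$. This excise-and-replace construction is modelled on the cobordism case in \cite{mwtop} and recurs in the proof of Lemma~\ref{treduction}. Your route is more direct: you simply fix a basepoint $x_I\in F_I$ for every $I\in\mathcal{I}_\mathcal{K}$, choosing $x_I\in X_c$ whenever $I\in\mathcal{I}_{\mathcal{K}_c}$, and then take any precompact shrinking $\{F_i'\}$ that covers $X$ and retains these finitely many points. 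This is entirely sufficient for the lemma as stated and avoids invoking the taming of $\mathcal{K}_c$ as a preliminary step. The paper's approach buys a uniform template that it reuses later; yours buys brevity.
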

Before proceeding to the proof of Lemma \ref{ttaming}, it is worth noting that the proof of this lemma, as well as the proofs of Lemmas \ref{treduction} and \ref{tsection} are inspired by the analogous statements concerning cobordisms as stated in \cite{mwtop}. A transverse subatlas is similar to the end of a cobordism, with the domains being submanifolds instead of having a collared structure. In contrast to cobordisms, the section and coordinate changes of a transverse subatlas do not have a prescribed structure in the normal direction.

\begin{proof}
We first deal with shrinking on the level of footprints. We will prove that there is a suitable shrinking $\{F_i'\}$ of $\{F_i\}$ such that $\{F_i^{c,\prime} := F_i'\cap X_c\}$ defines a shrinking of $\{F_i^c\}$ where $\{F_i\},\{F_i^c\}$ are the footprint covers coming from $\mathcal{K},\mathcal{K}_c$ respectively. First use Proposition \ref{taming} to choose a tame shrinking $\mathcal{K}_c''$ of $\mathcal{K}_c$. Using the notation of Definition \ref{intermediatedef}, let $\underline{P}_I\sqsubset \underline{U}_I$ be an open subset such that $\underline{P}_I\cap \underline{U}_I^c = \underline{U}_I^{c,\prime\prime}$. For $I\in\mathcal{I}_{\mathcal{K}_c}$, define
\begin{equation}\label{Hi}
H_I := \underline{\psi}_I\left((\underline{s}_I^{-1}(0)\cap \underline{P}_I\right).
\end{equation}
Note that $H_I$ is a precompact open subset of $F_I$ such that $H_I\cap X_c = F_I^{c,\prime\prime}$. Furthermore, because $X_c$ is compact, there is some $\delta > 0$ such that a $2\delta$-neighborhood of $X_c$ is contained in $\cup_IH_I$.

Let $\{G_i\}$ be a shrinking of $\{F_i\}$. We claim that
\[F_i' := \left(G_i\setminus\overline{B_{\delta}(X_c)}\right)\cup H_i\]
is the desired shrinking. Here we use $B_\delta$ to denote a $\delta$-metric neighborhood in $X$ and are using the convention that $H_i=\emptyset$ if $\{i\}\notin\mathcal{I}_{\mathcal{K}_c}$. It is clear the $F_i'$ is open in $F_i$ and is precompact in $F_i$ because $H_i$ is precompact. The collection $\{F_i'\}$ still covers $X$ because $\{G_i\}$ covers $X$ and $\{H_i\}$ covers a $2\delta$ neighborhood of $X_c$. Moreover,
\[F_i^{c,\prime} := F_i'\cap X_c = H_i\cap X_c = F_I^{c,\prime\prime}\]
so $\{F_i^{c,\prime}\}$ is a shrinking of $\{F_i\}$. Finally, requirement $(\ref{shrinkingeq})$ in the definition of a shrinking is satisfied because it is satisfied for $\{F_i^{c,\prime}\}$.

Therefore we have a shrinking $\{F_i'\}$ of $\{F_i\}$ such that intersecting with $X_c$ gives a shrinking $\{F_i^{\prime,c}\}$ of $\{F_i^c\}$. Now apply Proposition \ref{taming} to construct a tame shrinking $\mathcal{K}'\sqsubset \mathcal{K}$ with footprints $\{F_i'\}$. Then use (\ref{ttamingint}) to define the domains of $\mathcal{K}_c'$. The footprint cover induced from $\mathcal{K}_c'$ is then $\{F_i^{\prime,c}\}$ which is a shrinking of $\{F_i^c\}$. The other conditions of Definition \ref{kshrinking} are easily seen to be satisfied, so $\mathcal{K}_c'$ is a shrinking of a $\mathcal{K}_c$. The tameness of $\mathcal{K}'$ implies the tameness of $\mathcal{K}_c'$.
\end{proof}

Next we show that process of reduction can be done compatibly for a transverse subatlas.

\begin{lemma}\label{treduction}
Let $\mathcal{K}_c$ be a tame transverse subatlas of a tame atlas $\mathcal{K}$. Then there is a reduction $\mathcal{V}_c$ of $\mathcal{K}_c$ and a reduction $\mathcal{V}$ of $\mathcal{K}$ such that the domains $V_I, V_I^c$ of $\mathcal{V},\mathcal{V}_c$ respectively satisfy
\[V_I\cap U_I^c = V_I^c,\]
so each domain of $\mathcal{V}_c$ is cutout transversally from a domain of $\mathcal{V}$ by the same evaluation map that defines $\mathcal{K}_c$ as a transverse subatlas of $\mathcal{K}$. We call such reductions $\mathcal{V},\mathcal{V}_c$ a \textbf{compatible transverse reduction}.
\end{lemma}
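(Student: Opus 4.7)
The plan is to mirror the structure of the proof of Lemma \ref{ttaming}: first construct a reduction of the subatlas $\mathcal{K}_c$, then transversally thicken it inside $\mathcal{K}$, and finally augment with reduction pieces covering the portion of $|s_\mathcal{K}|^{-1}(0)$ lying away from $X_c$. Throughout we work with $\Gamma_I$-invariant open sets so that condition (\ref{reduction0}) of Definition \ref{reductiondef} is automatic.

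First, apply Proposition \ref{reduction} to the tame subatlas $\mathcal{K}_c$ to obtain a reduction $\mathcal{V}_c=(V_I^c)_{I\in\mathcal{I}_{\mathcal{K}_c}}$. Since each $U_I^c=f_I^{-1}(C)$ is a closed smooth $\Gamma_I$-invariant submanifold of $U_I$ (by condition (\ref{conditioniii}) of Definition \ref{tdefinition} and the $\Gamma_I$-equivariance of $f_I$ from condition (\ref{evcoordinatechange})), we may pick $\Gamma_I$-invariant tubular neighborhoods $\mathcal{N}_I\subset U_I$ of $U_I^c$ with retractions $p_I:\mathcal{N}_I\to U_I^c$, using that $\Gamma_I$ is finite and acts smoothly. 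Shrinking as needed, define $W_I:=p_I^{-1}(V_I^c)\cap\mathcal{N}_I'$ for a smaller $\Gamma_I$-invariant tubular neighborhood $\mathcal{N}_I'\sqsubset\mathcal{N}_I$; then $W_I$ is $\Gamma_I$-invariant, precompact in $U_I$, and satisfies $W_I\cap U_I^c=V_I^c$.

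Next, we handle the part of $|s_\mathcal{K}|^{-1}(0)$ lying at a definite distance from $X_c$. Condition (\ref{conditioniv}) of Definition \ref{tdefinition} implies that for $I\notin\mathcal{I}_{\mathcal{K}_c}$ the footprint $F_I$ is disjoint from $X_c$, so after choosing the tubular neighborhoods large enough, $X_c$ sits inside $\bigcup_{I\in\mathcal{I}_{\mathcal{K}_c}}\pi_\mathcal{K}(W_I)$. Apply the construction of Proposition \ref{reduction} to an auxiliary version of $\mathcal{K}$ from which a small open neighborhood $\mathcal{N}(X_c)$ of $X_c$ has been removed (with $\overline{\mathcal{N}(X_c)}\subset\bigcup_{I\in\mathcal{I}_{\mathcal{K}_c}}\pi_\mathcal{K}(W_I)$) to produce $\Gamma_I$-invariant precompact open sets $V_I^{\mathrm{out}}\subset U_I$ which cover $|s_\mathcal{K}|^{-1}(0)\setminus\mathcal{N}(X_c)$ and can be arranged to be disjoint from a neighborhood of $U_I^c$ for each $I\in\mathcal{I}_{\mathcal{K}_c}$. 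Setting $V_I:=W_I\cup V_I^{\mathrm{out}}$ for $I\in\mathcal{I}_{\mathcal{K}_c}$ and $V_I:=V_I^{\mathrm{out}}$ otherwise, we have $V_I\cap U_I^c=V_I^c$ by construction.

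The main obstacle is verifying the separation condition (\ref{reduction2}) of Definition \ref{reductiondef} globally for $\mathcal{V}$. For pairs $I,J\in\mathcal{I}_{\mathcal{K}_c}$, a further shrinking of the tubular neighborhoods ensures that any overlap $\pi_\mathcal{K}(\overline{W_I})\cap\pi_\mathcal{K}(\overline{W_J})\neq\emptyset$ forces a corresponding overlap $\pi_{\mathcal{K}_c}(\overline{V_I^c})\cap\pi_{\mathcal{K}_c}(\overline{V_J^c})\neq\emptyset$, at which point the separation of $\mathcal{V}_c$ yields $I\subset J$ or $J\subset I$. For pairs involving at least one index outside $\mathcal{I}_{\mathcal{K}_c}$, the $\varepsilon$-separation provided by condition (\ref{conditioniv}) keeps the $W_I$'s away from the domains $U_J$, and the separation between the various $V_I^{\mathrm{out}}$ pieces is built into the auxiliary application of Proposition \ref{reduction}. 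Conditions (\ref{reduction1}) and (\ref{reduction3}) of Definition \ref{reductiondef} are immediate from the precompactness of the constituents and the fact that the $V_I$'s together cover the zero set.
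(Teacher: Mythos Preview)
Your strategy---build $\mathcal{V}_c$ first, thicken via tubular neighborhoods to get $W_I$, then add an ``outer'' piece $V_I^{\mathrm{out}}$---is close in spirit to the paper's argument, but the verification of the separation condition (\ref{reduction2}) has a genuine gap, and one of your black-box appeals is ill-defined.

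First, your case analysis for condition (\ref{reduction2}) is incomplete. For non-nested $I,J\in\mathcal{I}_{\mathcal{K}_c}$, you treat $\overline{W_I}$ against $\overline{W_J}$ and $\overline{V_I^{\mathrm{out}}}$ against $\overline{V_J^{\mathrm{out}}}$, but you never address the cross term $\pi_\mathcal{K}(\overline{W_I})\cap\pi_\mathcal{K}(\overline{V_J^{\mathrm{out}}})$. Your sentence ``for pairs involving at least one index outside $\mathcal{I}_{\mathcal{K}_c}$\ldots'' does not cover this, since here both indices lie in $\mathcal{I}_{\mathcal{K}_c}$. To kill this term you would need to arrange that $f_J(V_J^{\mathrm{out}})$ stays at a definite distance from $C\subset N$ (not merely that $V_J^{\mathrm{out}}$ avoids $U_J^c$), and then use compatibility of $f$ with the realization; this is not what you wrote. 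Relatedly, your claim that ``a further shrinking of the tubular neighborhoods'' forces $\pi_\mathcal{K}(\overline{W_I})\cap\pi_\mathcal{K}(\overline{W_J})\neq\emptyset\Rightarrow\pi_{\mathcal{K}_c}(\overline{V_I^c})\cap\pi_{\mathcal{K}_c}(\overline{V_J^c})\neq\emptyset$ requires the thickening to be small \emph{in the admissible metric on $|\mathcal{K}|$}, whereas a tubular neighborhood built from an arbitrary $\Gamma_I$-invariant metric on $U_I$ has no a priori control in $|\mathcal{K}|$.

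Second, ``apply Proposition \ref{reduction} to an auxiliary version of $\mathcal{K}$ from which a small open neighborhood of $X_c$ has been removed'' is not a legitimate move: the resulting space is not compact, so neither Proposition \ref{reduction} nor the cover-reduction Lemma \ref{coverreductionlem} applies directly. The paper handles this by working at the footprint level (Lemma \ref{footprintreduction}), where one can pass to the compact set $X\setminus B_\delta(X_c)$, and then lifting via Lemma \ref{514}. After assembling candidate sets $\underline{W}_I$ satisfying (\ref{reduction1}), (\ref{reduction3}), and $\underline{W}_I\cap\underline{U}_I^c=\underline{V}_I^c$, the paper does not simply assert (\ref{reduction2}); instead it invokes the explicit excision procedure of \cite{mwtop} that removes closed neighborhoods $B(Y_{IJ})$ of the bad overlaps, and then checks that these removed sets miss $\underline{U}_I^c$ (using the admissible metric). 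That last check is exactly the content you are missing.
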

We will prove Lemma \ref{treduction} in steps. The following two preliminary lemmas will  be useful. We will use the notation of Definition \ref{intermediatedef}.
\begin{lemma}\cite[Lemma 2.2.7]{mwiso}\label{514}
Let $\mathbf{K}$ be a Kuranishi chart. Then for any open subset $F'\subset F$, there is a restriction $\mathbf{K}'$ to $F'$ with domain $\underline{U}'$ such that $U' := \pi^{-1}(\underline{U}')$ satisfies $\overline{U'}\cap s^{-1}(0)=\psi^{-1}(\overline{F'})$. Moreover, if $F'\sqsubset F$, then $\underline{U}'$ can be chosen to be precompact in $\underline{U}$.
\end{lemma}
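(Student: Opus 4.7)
The plan is to work with the intermediate chart $\underline{\mathbf{K}}=(\underline{U},\underline{U\times E},\underline{s},\underline{\psi})$ from Definition \ref{intermediatedef} and construct a suitable open subset $\underline{U}'\subset\underline{U}$, after which $U':=\pi^{-1}(\underline{U}')$ is automatically $\Gamma$-invariant and the restricted tuple defines a valid Kuranishi chart $\mathbf{K}'$. Since $\pi$ is a proper open and closed quotient map (coming from the finite group action), the identities $\underline{U}'\cap\underline{s}^{-1}(0)=\underline{\psi}^{-1}(F')$ and $\overline{\underline{U}'}\cap\underline{s}^{-1}(0)=\underline{\psi}^{-1}(\overline{F'}\cap F)$ transport to the analogous identities on the level of $U'$; in particular, the second of these is the desired statement $\overline{U'}\cap s^{-1}(0)=\psi^{-1}(\overline{F'})$, and the first ensures that the footprint of $\mathbf{K}'$ is $F'$.

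The heart of the argument is the construction of such $\underline{U}'$. Writing $K:=\underline{\psi}^{-1}(F')$ and $\tilde{K}:=\underline{\psi}^{-1}(\overline{F'}\cap F)$, the key observation is that, since $\underline{\psi}:\underline{s}^{-1}(0)\to F$ is a homeomorphism and $\underline{s}^{-1}(0)$ is closed in $\underline{U}$, the set $B:=\underline{s}^{-1}(0)\setminus K$ is closed in the ambient space $\underline{U}$ and disjoint from $K$. I would fix any metric $d$ inducing the topology on $\underline{U}$ (possible because a quotient of a manifold by a finite group action is metrizable) and set
\[
\underline{U}':=\{\,x\in\underline{U}\,:\,d(x,K)<d(x,B)\,\},
\]
which is open as the sublevel set of a continuous function.

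To verify the two required identities I would argue as follows. On the one hand, any $x\in K$ satisfies $d(x,K)=0<d(x,B)$ (the latter because $B$ is closed and $x\notin B$), and any $x\in B$ fails the strict inequality; hence $\underline{U}'\cap\underline{s}^{-1}(0)=K$. On the other hand, $K\subset\underline{U}'$ yields $\tilde{K}=\overline{K}^{\underline{U}}\subset\overline{\underline{U}'}$, while any $y\in\overline{\underline{U}'}\cap\underline{s}^{-1}(0)$ that is not in $K$ must lie in $B$, so taking limits along an approximating sequence gives $d(y,K)\leq d(y,B)=0$, forcing $y\in\overline{K}^{\underline{U}}=\tilde{K}$. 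For the precompactness addendum, when $F'\sqsubset F$ the set $\tilde{K}$ is compact, so intersecting $\underline{U}'$ with a precompact open neighborhood $P\sqsubset\underline{U}$ of $\tilde{K}$ (available because $\underline{U}$ is locally compact) preserves both identities (since $K\subset\tilde{K}\subset P$) and achieves the desired precompact inclusion.

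The construction is entirely topological and presents no serious obstacle; the only subtlety worth flagging is recognizing that $B$ is closed in the ambient space $\underline{U}$ and not merely in $\underline{s}^{-1}(0)$, which is exactly what makes the distance-comparison definition control $\overline{\underline{U}'}$ near the non-zero stratum of the section. A naive $\varepsilon$-thickening of $\tilde{K}$ would not suffice in general, since when $\underline{U}$ is non-compact $\tilde{K}$ and $\underline{s}^{-1}(0)\setminus\tilde{K}$ may have zero distance despite being disjoint.
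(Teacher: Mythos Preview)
The paper does not supply its own proof of this lemma; it is quoted verbatim as \cite[Lemma 2.2.7]{mwiso} and used as a black box in the proofs of Lemmas \ref{ttaming} and \ref{footprintreduction}. There is therefore nothing in the present paper to compare your argument against.

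That said, your argument is correct and self-contained. The distance-comparison set $\underline{U}'=\{x:d(x,K)<d(x,B)\}$ is the standard device for this kind of separation result, and your verification of the two identities and of the precompactness addendum is clean. The one point worth stating a shade more carefully is the edge case $B=\emptyset$ (i.e.\ $F'=F$), where one takes $\underline{U}'=\underline{U}$ by convention; and in the lifting step you might spell out that $\overline{U'}=\pi^{-1}(\overline{\underline{U}'})$ uses both openness and properness of the finite-group quotient $\pi$. Neither of these is a gap, just a place where a reader might pause.
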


Our first step in proving Lemma \ref{treduction} is dealing with reductions on the level of footprints. A precise statement of this is made below as Lemma \ref{footprintreduction}. We begin with a preliminary definition and lemma regarding reductions.

\begin{definition}\label{coverreduction}
Let $X$ be a compact Hausdorff space and $X=\cup_{i=1,\ldots,N}F_i$ be a finite open cover. A collection $\{Z_I\}_{I\subset\{1,\ldots,N\}}$ is called a \textbf{cover reduction} if the $Z_I\subset X$ satisfy
\begin{enumerate}[(i)]
\item $Z_I\sqsubset F_I:=\bigcap_{i\in I}F_i$ is a (possibly empty) open subset.
\item If $\overline{Z_I}\cap\overline{Z_J}\neq\emptyset,$ then $I\subset J$ or $J\subset I$.
\item $X=\bigcup_{I}Z_I$.
\end{enumerate}
\end{definition}
Note that if $\mathcal{K}$ is a Kuranishi atlas on $X$ with footprints $\{F_i\}$ and $\mathcal{V}$ is a reduction of $\mathcal{K}$ in the sense of Definition \ref{reductiondef}, then the reduced footprints $\left\{Z_I := \psi_I\left(V_I\cap s_I^{-1}(0)\right)\right\}_{I\subset\{1,\ldots,N\}}$ are a cover reduction of $\{F_i\}$. The next lemma says cover reductions always exist.
\begin{lemma}\cite[Lemma 5.3.1]{mwtop}\label{coverreductionlem}
Let $X$ be a compact Hausdorff space and $X=\cup_{i=1,\ldots,N}F_i$ be a finite open cover. Then there exists a cover reduction $\{Z_I\}_{I\subset\{1,\ldots,N\}}$. Moreover, $\{Z_I\}$ can be chosen to be of the form
\begin{equation}\label{712new}
Z_I := \left(\bigcap_{i\in I}G_i^{|I|}\right)\setminus\bigcup_{j\notin I}\overline{F_ j^{|I|}}
\end{equation}
where $F_i^{j},G_i^{j}$ are open sets of $F_i$ such that
\begin{equation}\label{711new}
F_i^{0}\sqsubset G_i^{1}\sqsubset F_i^{1}\sqsubset G_i^{2}\sqsubset\cdots\sqsubset F_i^{N}\sqsubset F_i
\end{equation}
and $\{F^{0}_i\}$ still covers $X$.
\end{lemma}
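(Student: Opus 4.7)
The plan is to construct the tower of shrinkings (\ref{711new}) using normality of $X$, define $Z_I$ by the formula (\ref{712new}), and verify the three properties of Definition \ref{coverreduction}. Since $X$ is compact Hausdorff, hence normal, a standard shrinking argument gives: any finite open cover of $X$ admits a precompact shrinking that still covers $X$. I would apply this iteratively, starting from $\{F_i\}$ and alternately labeling the successive shrinkings $F_i^N, G_i^N, F_i^{N-1}, G_i^{N-1}, \ldots, G_i^1, F_i^0$, so that the entire chain (\ref{711new}) holds and $\{F_i^0\}$ still covers $X$. Each step uses only the finiteness of the cover and normality, so the covering property is preserved throughout.

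Once the tower is in hand, conditions (i) and (ii) of Definition \ref{coverreduction} are short. For (i), $Z_I\subset \bigcap_{i\in I}G_i^{|I|}\sqsubset \bigcap_{i\in I}F_i=F_I$. For (ii), suppose $I\not\subset J$ and $J\not\subset I$, and WLOG $|I|\leq |J|$. Choose $i_0\in I\setminus J$. Then $\overline{Z_I}\subset \overline{G_{i_0}^{|I|}}\subset F_{i_0}^{|I|}\subset F_{i_0}^{|J|}$, using that $F_{i_0}^k$ is increasing in $k$. On the other hand $Z_J\subset X\setminus\overline{F_{i_0}^{|J|}}$, and taking closures of this open set gives $\overline{Z_J}\subset X\setminus F_{i_0}^{|J|}$ (the closure of the complement of a closed set is contained in the complement of its interior, and $F_{i_0}^{|J|}$ is open). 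These two regions are disjoint, so $\overline{Z_I}\cap\overline{Z_J}=\emptyset$.

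The main obstacle is (iii): checking that $\{Z_I\}$ covers $X$. Given $x\in X$, I would assign an index set $I$ by a combinatorial argument on the level sets $A_k:=\{i : x\in G_i^{k}\}$ and $B_k:=\{i : x\in \overline{F_i^{k}}\}$. From the tower one has $A_k\subset B_k\subset A_{k+1}$ (the second inclusion because $\overline{F_i^k}\subset G_i^{k+1}$), the covering property of $\{F_i^0\}$ forces $|A_1|\geq 1$, and trivially $|B_N|\leq N$. I would then set
\[k_0:=\min\bigl\{k\in\{1,\ldots,N\} : |B_k|\leq k\bigr\},\]
which exists since $|B_N|\leq N$. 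If $k_0=1$ then $1\leq |A_1|\leq |B_1|\leq 1$ forces $|A_1|=|B_1|=1$. If $k_0\geq 2$ then minimality gives $|B_{k_0-1}|\geq k_0$, and combining with $|B_{k_0-1}|\leq |A_{k_0}|\leq |B_{k_0}|\leq k_0$ yields $|A_{k_0}|=|B_{k_0}|=k_0$. Since $A_{k_0}\subset B_{k_0}$ have the same cardinality they coincide, so $I:=A_{k_0}=B_{k_0}$ satisfies $|I|=k_0$, $x\in G_i^{|I|}$ for $i\in I$, and $x\notin \overline{F_j^{|I|}}$ for $j\notin I$; thus $x\in Z_I$ by definition.
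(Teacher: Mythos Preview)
The paper does not give its own proof of this lemma; it simply cites \cite[Lemma~5.3.1]{mwtop} and uses the statement as a black box. Your argument is correct and is essentially the standard proof from \cite{mwtop}: build the nested tower by iterated shrinkings, verify (i) and (ii) directly from the form of $Z_I$, and for (iii) run the pigeonhole-type argument on the level sets $A_k,B_k$ to extract an $I$ with $|I|=k_0$ and $x\in Z_I$.
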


The following lemma is the first step in the proof of Lemma \ref{treduction} and says that compatible transverse reductions can be done on the level of footprints.
\begin{lemma}\label{footprintreduction}
Let $\mathcal{K}_c$ be a tame transverse subatlas of a tame atlas $\mathcal{K}$. Let $\{F_i^c\}, \{F_i\}$ be the footprint covers of $\mathcal{K}_c, \mathcal{K}$. Then there is a cover reduction $\{Z_I^c\}$ of $\{F_i^c\}$ and a cover reduction $\{Z_I\}$ of $\{F_i\}$ such that
\[Z_I^c:=Z_I\cap X_c.\]
\end{lemma}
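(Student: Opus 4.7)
The plan is to first construct the cover reduction on $X_c$ via Lemma~\ref{coverreductionlem}, then carefully extend the resulting nested open sets into $X$, and finally glue them with an independent cover reduction of $X$ using the metric-neighborhood trick from the proof of Lemma~\ref{ttaming}.

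First I would apply Lemma~\ref{coverreductionlem} to the compact space $X_c$ with cover $\{F_i^c\}_{i\in\mathcal{I}_{\mathcal{K}_c}}$, producing nested open sets $F_i^{c,0}\sqsubset G_i^{c,1}\sqsubset\cdots\sqsubset F_i^{c,N}\sqsubset F_i^c$ in $X_c$ with $\{F_i^{c,0}\}$ covering $X_c$, together with the associated cover reduction $\{Z_I^c\}$ of $\{F_i^c\}$ given explicitly by (\ref{712new}). Using that $X$ is a compact metric space and $X_c$ is closed, for each $i\in\mathcal{I}_{\mathcal{K}_c}$ and each $j$ I would extend $F_i^{c,j}$ and $G_i^{c,j}$ to open neighborhoods $\hat F_i^j,\hat G_i^j\subset F_i$ in $X$, chosen small enough transverse to $X_c$ that $\hat F_i^j\cap X_c = F_i^{c,j}$, $\overline{\hat F_i^j}\cap X_c = \overline{F_i^{c,j}}$, the analogous statements for $\hat G_i^j$ hold, and the nesting $\hat F_i^j\sqsubset \hat G_i^{j+1}\sqsubset\hat F_i^{j+1}\sqsubset\cdots\sqsubset F_i$ holds in $X$. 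Such extensions can be constructed by covering each compact set $\overline{F_i^{c,j}}\subset X_c$ by finitely many small metric balls in $X$ whose radii are chosen simultaneously to respect the (finitely many) precompact nestings, the ambient $F_i$, and a shrinking bound preventing $\overline{\hat F_i^j}\cap X_c$ from exceeding $\overline{F_i^{c,j}}$. For $i\notin\mathcal{I}_{\mathcal{K}_c}$ set $\hat F_i^j=\hat G_i^j:=\emptyset$; then $\bigcup_i\hat F_i^0$ is an open neighborhood of $X_c$ in $X$, hence contains a metric neighborhood $B_{2\delta}(X_c)$ for some $\delta>0$.

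Next, I would apply Lemma~\ref{coverreductionlem} directly to $(X,\{F_i\})$ to produce independent nested sets $\tilde F_i^j,\tilde G_i^j$, and then, following the strategy of Lemma~\ref{ttaming}, define
\[F_i^j := \bigl(\tilde F_i^j\setminus \overline{B_\delta(X_c)}\bigr)\cup \hat F_i^j,\qquad G_i^j := \bigl(\tilde G_i^j\setminus \overline{B_\delta(X_c)}\bigr)\cup \hat G_i^j.\]
These are open subsets of $F_i$; the precompact nesting is inherited from the two pieces (using $\overline{B_\delta(X_c)}\subset B_{2\delta}(X_c)\subset\hat F_i^0\subset \hat G_i^1$); and $\{F_i^0\}$ still covers $X$ since $\{\tilde F_i^0\}$ covers $X\setminus B_\delta(X_c)$ and $\{\hat F_i^0\}$ covers $B_\delta(X_c)\supset X_c$. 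The cover reduction $\{Z_I\}$ of $\{F_i\}$ defined from these sets via (\ref{712new}) then satisfies $Z_I\cap X_c=Z_I^c$: the parts $\tilde F_i^j\setminus\overline{B_\delta(X_c)}$ and $\tilde G_i^j\setminus\overline{B_\delta(X_c)}$ are disjoint from $X_c$ even after taking closures, while the $\hat F_i^j,\hat G_i^j$ parts restrict correctly by their defining properties; if $I\notin\mathcal{I}_{\mathcal{K}_c}$ then some $\hat G_i^{|I|}$ is empty, forcing $Z_I\cap X_c=\emptyset$ automatically.

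The main technical obstacle is arranging the closure condition $\overline{\hat F_i^j}\cap X_c=\overline{F_i^{c,j}}$ in the extension step: an arbitrary open neighborhood of $F_i^{c,j}$ in $X$ can easily have a strictly larger closure intersection with $X_c$, so the extension radii must be chosen carefully using metrizability. Once this is in hand, the gluing-and-verification argument is a routine bookkeeping exercise.
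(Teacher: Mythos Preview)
Your overall strategy coincides with the paper's: build a cover reduction on $X_c$ via Lemma~\ref{coverreductionlem}, thicken the nested sets $F_i^{c,j},G_i^{c,j}$ to open sets in $X$, and then splice these with an independent nested family on $X$ using the ``remove a tube around $X_c$'' trick from Lemma~\ref{ttaming}. Two points in your execution need repair, and it is instructive to see how the paper handles them.

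First, your proposed extension step does not deliver the intersection property. Covering the \emph{closure} $\overline{F_i^{c,j}}$ by finitely many metric balls in $X$ produces an open neighborhood of $\overline{F_i^{c,j}}$, whose trace on $X_c$ is a neighborhood of $\overline{F_i^{c,j}}$ in $X_c$ --- strictly larger than $F_i^{c,j}$ whenever the latter has nonempty boundary. So the condition $\hat F_i^j\cap X_c=F_i^{c,j}$ fails for this construction. A purely metric fix is possible (e.g.\ take $\hat F_i^j=\bigcup_{a\in F_i^{c,j}}B_{r(a)}(a)$ with $r(a)$ a suitable fraction of $d(a,X_c\setminus F_i^{c,j})$), but arranging simultaneously the precompact nestings \emph{and} $\hat F_i^j\subset F_i$ then requires genuine bookkeeping. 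The paper sidesteps this entirely by working in the chart domains: it first lifts the $F_i^{c,j},G_i^{c,j}$ to open sets $\underline U_{i,F}^{c,j},\underline U_{i,G}^{c,j}\subset\underline U_i^c$ via Lemma~\ref{514}, then uses that $\underline U_i^c\subset\underline U_i$ is a closed submanifold to thicken these in the normal direction, and finally pushes the thickenings forward to $F_i\subset X$ via $\underline\psi_i$. Since $\underline\psi_i$ identifies $\underline U_i^c\cap\underline s_i^{-1}(0)$ with $F_i^c$, the intersection and closure conditions come for free.

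Second, your gluing with a single radius $\delta$ breaks the nesting. With $F_i^j=(\tilde F_i^j\setminus\overline{B_\delta(X_c)})\cup\hat F_i^j$, the closure $\overline{\tilde F_i^j\setminus\overline{B_\delta(X_c)}}$ can contain points at distance exactly $\delta$ from $X_c$, and for such a point there is no reason it lies in $\hat G_i^{j+1}$ (indeed $\hat G_i^{j+1}=\emptyset$ when $\{i\}\notin\mathcal I_{\mathcal K_c}$), so $\overline{F_i^j}\subset G_i^{j+1}$ can fail. Your parenthetical ``$\overline{B_\delta(X_c)}\subset B_{2\delta}(X_c)\subset\hat F_i^0$'' is not available: only the \emph{union} $\bigcup_i\hat F_i^0$ contains $B_{2\delta}(X_c)$. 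The paper fixes this by using a strictly decreasing sequence of radii $\lambda_0>\lambda_1>\cdots>\lambda_{2N}$ in place of a single $\delta$, removing $\overline{B_{\lambda_{2j}}(X_c)}$ at level $j$; then a boundary point at distance $\lambda_{2j}$ is automatically outside $\overline{B_{\lambda_{2j+1}}(X_c)}$ and the nesting goes through. With these two corrections your argument becomes essentially the paper's.
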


\begin{proof}
By Lemma \ref{coverreductionlem}, $\{F_i^c\}$ has a cover reduction $\{Z_I^c\}$ of the form
\begin{equation}\label{712}
Z_I^c := \left(\bigcap_{i\in I}G_i^{c, |I|}\right)\setminus\bigcup_{j\notin I}\overline{F_ j^{c,|I|}}
\end{equation}
where $F_i^{c,j},G_i^{c,j}$ are open subsets of $F_i^c$ such that
\begin{equation}\label{711}
F_i^{c,0}\sqsubset G_i^{c,1}\sqsubset F_i^{c,1}\sqsubset G_i^{c,2}\sqsubset\cdots\sqsubset F_i^{c,N^c}\sqsubset F_i^c
\end{equation}
and $\{F^{c,0}_i\}$ still covers $X_c$. Use Lemma \ref{514} to construct open domains
\[\underline{U}^{c,0}_{i,F}\sqsubset \underline{U}^{c,1}_{i,G}\sqsubset\cdots\sqsubset \underline{U}^{c,N^c}_{i,F}\sqsubset \underline{U}_i^c\]
with footprints (\ref{711}). We then wish to construct open sets
\[\underline{U}^{0}_{i,F}\sqsubset \underline{U}^{1}_{i,G}\sqsubset\cdots\sqsubset \underline{U}^{N^c}_{i,F}\sqsubset \underline{U}_i\]
such that
\[\underline{U}^{j}_{i,F}\cap \underline{U}_i^c = \underline{U}^{c,j}_{i,F},\qquad \underline{U}^{j}_{i,G}\cap\underline{U}_i^c = \underline{U}^{c,j}_{i,G}.\]
Such sets $\underline{U}^{j}_{i,F}, \underline{U}^{j}_{i,G}$ can be built by putting a $\Gamma$-invariant metric $d_i$ on $U_i$ and considering the normal bundles of $\underline{U}^{c,j}_{i,F}, \underline{U}^{c,j}_{i,G}$ in $U_i$. (Note that $d_i$ does not come from an admissible metric as in Section \ref{atlases} because $d_i$ and $d_j$ are not chosen compatibly.)

Then the sets $\underline{\psi}_i(\underline{s}_i^{-1}(0)\cap\underline{U}^{0}_{i,F})$ are precompact open subsets of $F_i$ that intersect $X_c$ at exactly $F_i^{c,0}$. Additionally, together they cover some $2\delta>0$ neighborhood of $X_c$.

Let $\{F_i\}$ be the open footprint cover of $X$ coming from $\mathcal{K}$. We now aim to use Lemma \ref{coverreductionlem} to find the desired cover reduction of $\{F_i\}$. Let $H_i^0\ldots H_i^k\sqsubset L_i^{k+1}\sqsubset\ldots F_i$ be a series of nested covers as in (\ref{711}). Let
\[0<\lambda_{2N}<\cdots<\lambda_{0}<2\delta\] Now form the sets
\[S_i^j := \big(H_i^j\setminus\overline{B_{\lambda_{2j}}(X_c)}\big)\cup\underline{\psi}_I\left(\underline{s}_I^{-1}(0)\cap \underline{U}_{i,F}^j\right)\]
\[R_i^j := \big(L_i^j\setminus\overline{B_{\lambda_{2j-1}}(X_c)}\big)\cup\underline{\psi}_I\left(\underline{s}_I^{-1}(0)\cap \underline{U}_{i,G}^j\right).\]
Here we use the convention that $\underline{U}^{j}_{i,F}=\underline{U}^{j}_{i,G}=\emptyset$ if $i\notin\{1,\ldots,N^c\}$. Then these form a nested reduction $S_i^0\ldots S_i^k\sqsubset R_i^{k+1}\sqsubset\ldots F_i$ as in (\ref{711}). Additionally we have the intersections
\[S_i^j\cap X_c = F_i^{c,j},\qquad R_i^j\cap X_c = G_i^{c,j}.\]
Therefore, we can form
\[Z_I := \left(\bigcap_{i\in I}R_i^{|I|}\right)\setminus\bigcup_{j\notin I}\overline{S_j^{|I|}}\]
which by Lemma \ref{coverreductionlem} is a cover reduction of $\{F_i\}$ and by construction has the property that $Z_I\cap X_c = Z_I^c$. Therefore, $\{Z_I\}$ is the desired cover reduction.
\end{proof}

Proposition \ref{reduction} says that any tame atlas has a reduction, but \cite[Proposition 5.3.5]{mwtop} proves the following stronger statement which we will use.

\begin{proposition}\cite[Proposition 5.3.5.]{mwtop}\label{reductionstrong}
Let $\mathcal{K}$ be a tame Kuranishi atlas with footprint cover $\{F_i\}$. Let $\{Z_I\}$ be a cover reduction. Then $\mathcal{K}$ has a reduction $\mathcal{V}$ such that the reduced footprint cover is $\{Z_I\}$.
\end{proposition}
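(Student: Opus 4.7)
The plan is to lift the cover reduction $\{Z_I\}$ to a chart-level reduction $\mathcal{V}=(V_I)_{I\in\mathcal{I}_{\mathcal{K}}}$ by constructing appropriate $\Gamma_I$-invariant open sets $V_I\subset U_I$ and verifying the four axioms of Definition \ref{reductiondef}. First, for each $I\in\mathcal{I}_{\mathcal{K}}$, apply Lemma \ref{514} to the tame chart $\mathbf{K}_I$ with the precompact open subset $Z_I\sqsubset F_I$ to produce a $\Gamma_I$-invariant precompact open set $V_I^0=\pi^{-1}(\underline{V}_I^0)\subset U_I$ satisfying $\overline{V_I^0}\cap s_I^{-1}(0)=\psi_I^{-1}(\overline{Z_I})$. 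These candidate domains immediately satisfy axioms (\ref{reduction0}) and (\ref{reduction1}) with correct footprints $Z_I$, and axiom (\ref{reduction3}) holds because $\{Z_I\}$ covers $X$. The nontrivial task is to shrink each $V_I^0$ to some $V_I$ to enforce the nesting axiom (\ref{reduction2}): that $\pi_{\mathcal{K}}(\overline{V_I})\cap\pi_{\mathcal{K}}(\overline{V_J})\neq\emptyset$ implies $I\subset J$ or $J\subset I$.

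For the nesting axiom I would exploit the topology of the realization. Since $\mathcal{K}$ is tame, $|\mathcal{K}|$ is Hausdorff and the inclusion $X\hookrightarrow|\mathcal{K}|$ identifies $X$ with a compact subspace; by Definition \ref{coverreduction}(ii) the compact sets $\overline{Z_I},\overline{Z_J}\subset|\mathcal{K}|$ are disjoint for every non-nested pair $I,J$. There are only finitely many such pairs, so by normality (or, equivalently, by choosing a uniform $\varepsilon$ in an admissible metric on $|\mathcal{K}|$) I can pick pairwise-disjoint open neighborhoods $\overline{Z_I}\subset W_I\subset|\mathcal{K}|$ simultaneously for all non-nested pairs. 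Define
\[ V_I := V_I^0\cap\pi_{\mathcal{K}}^{-1}(W_I), \]
which remains $\Gamma_I$-invariant and of the form $\pi^{-1}(\underline{V}_I)$, is still precompact in $U_I$, and whose intersection with $s_I^{-1}(0)$ still equals $\psi_I^{-1}(Z_I)$ because $\overline{Z_I}\subset W_I$ and the shrinking only removed points away from the zero set. If $\pi_{\mathcal{K}}(\overline{V_I})\cap\pi_{\mathcal{K}}(\overline{V_J})\neq\emptyset$ for non-nested $I,J$, then any point in this intersection would lie in $W_I\cap W_J=\emptyset$, a contradiction, so axiom (\ref{reduction2}) is established. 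Hence the reduced footprint cover $\{\psi_I(V_I\cap s_I^{-1}(0))\}$ is exactly $\{Z_I\}$.

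The main obstacle is the simultaneous bookkeeping of the several topological structures: $\Gamma_I$-equivariance, the quotient-compatibility $V_I=\pi^{-1}(\underline{V}_I)$, precompactness in $U_I$, and the fact that neighborhoods chosen globally in $|\mathcal{K}|$ pull back to honest open subsets of each chart. The tameness hypothesis is precisely what makes this manageable: the quotient maps $\underline{\pi}_{\mathcal{K}}|_{\underline{U}_I}$ are homeomorphisms onto their images, and an admissible metric on $|\mathcal{K}|$ pulls back to induce the original topology on $\underline{U}_I$ (see the discussion after Proposition \ref{taming}), so choices made in $|\mathcal{K}|$ descend locally to each chart without loss of structure. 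A subtlety worth watching is that the shrinking $V_I\subset V_I^0$ must not disturb the identity $V_I=\pi^{-1}(\underline{V}_I)$; this is automatic because the $W_I$ can themselves be chosen $\underline{\pi}_{\mathcal{K}}$-invariant, i.e. pulled back from open sets in $\underline{U}_I$ via the admissible metric, so their preimages under $\pi_{\mathcal{K}}$ are naturally $\Gamma_I$-invariant and of quotient type.
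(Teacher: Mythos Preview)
Your strategy---lift the cover reduction via Lemma~\ref{514} and then shrink to enforce axiom~(\ref{reduction2})---is the right one and matches the approach cited from \cite{mwtop}, but the separation step has a gap. You assert that a point in $\pi_{\mathcal{K}}(\overline{V_I})\cap\pi_{\mathcal{K}}(\overline{V_J})$ must lie in $W_I\cap W_J$; however, from $V_I\subset\pi_{\mathcal{K}}^{-1}(W_I)$ you only get $\pi_{\mathcal{K}}(V_I)\subset W_I$, and the closure $\overline{V_I}$ in $U_I$ can contain points mapping to $\partial W_I\setminus W_I$. You really need $\overline{W_I}\cap\overline{W_J}=\emptyset$ for non-nested pairs, which requires a further shrinking of the $W_I$. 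That in turn needs either normality of $|\mathcal{K}|$ or an admissible metric, and the paper only guarantees the latter for a tame atlas sitting inside nested tame shrinkings $\mathcal{K}\sqsubset\mathcal{K}'\sqsubset\mathcal{K}''$; with the bare hypothesis ``$\mathcal{K}$ tame'' you have Hausdorffness of $|\mathcal{K}|$ but not immediately a metric, so the ``uniform $\varepsilon$'' shortcut is not fully justified as stated.

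The paper does not prove this proposition directly but cites \cite{mwtop}; the procedure is summarized in the proof of Lemma~\ref{treduction} and differs from yours by working chart-locally rather than globally in $|\mathcal{K}|$. Starting from preliminary sets $\underline{W}_I$ with the correct footprints (your $V_I^0$), one defines for each non-nested pair the closed ``bad'' set $Y_{IJ}:=\overline{\underline{W}_I}\cap\pi_{\underline{\mathcal{K}}}^{-1}\big(\pi_{\underline{\mathcal{K}}}(\overline{\underline{W}_J})\big)\subset\underline{U}_I$, checks it is disjoint from $\underline{\psi}_I^{-1}(\overline{Z_I})$, excises a closed neighborhood $B(Y_{IJ})$, and sets $\underline{V}_I:=\underline{W}_I\setminus\bigcup_{J\notin C(I)}B(Y_{IJ})$. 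The advantage of this chart-local excision is that all separations take place inside the metrizable intermediate domains $\underline{U}_I$, using Hausdorffness of $|\mathcal{K}|$ only to ensure $Y_{IJ}$ is closed; no global metric or normality on $|\mathcal{K}|$ is needed. Your global approach is conceptually cleaner but leans on regularity of $|\mathcal{K}|$ beyond what tameness alone provides.
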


We now use Lemma \ref{footprintreduction} and Proposition \ref{reductionstrong} to prove Lemma \ref{treduction}.

\begin{proof}[Proof of Lemma \ref{treduction}]
Let $\{Z_I^c\}, \{Z_I\}$ be cover reductions of $\{F_i^c\}, \{F_i\}$ as in Lemma \ref{footprintreduction}. Use Proposition \ref{reductionstrong} to construct a reduction $\mathcal{V}_c$ of $\mathcal{K}_c$ with reduced footprint cover $\{Z_I^c\}$. We now aim to construct the domains $V_I$ of the desired reduction $\mathcal{V}$ of $\mathcal{K}$.

Let $\underline{P}_I\sqsubset \underline{U}_I$ be an open, precompact subset such that
\[\underline{P}_I\cap \underline{U}_I^c = \underline{V}_I^c,\qquad \overline{\underline{P}_I}\cap \underline{U}_I^c = \overline{\underline{V}_I^c}.\]
Together the sets $\{\underline{\psi}_I(\underline{s}_I^{-1}(0)\cap \underline{P}_I)\}$ cover some $\delta>0$ neighborhood of $X_c$.

The sets $\left\{Z_I\cap\big(X\setminus B_{\delta}(X_c)\big)\right\}$ form a cover reduction of the cover $\left\{F_i\cap\big(X\setminus B_{\delta}(X_c)\big)\right\}$ of $X\setminus B_{\delta}(X_c)$. We then use Lemma \ref{514} to provide precompact open sets $\underline{Q}_I\sqsubset \underline{U}_I\setminus \underline{U}_I^c$ such that
\[\underline{Q}_I\cap \underline{s}_I^{-1}(0) = \underline{\psi}_I^{-1}\Big(Z_I\cap\big(X\setminus B_{\delta}(X_c)\big)\Big),\]
\[\overline{\underline{Q}_I}\cap \underline{s}_I^{-1}(0) = \underline{\psi}_I^{-1}\Big(\overline{Z_I}\cap\big(X\setminus B_{\delta}(X_c)\big)\Big).\]

Let $\underline{d}_I$ be an admissible metric on $\underline{U}_I$ (see Section \ref{atlases}) and $\varepsilon>0$ be small enough so that the $\varepsilon$-neighborhood $B_\varepsilon(\underline{V}_I^c)$ of $\underline{V}_I^c$ in $\underline{U}_I$ is precompact in $\underline{U}_I$. Define
\[\underline{W}_I:=\underline{Q}_I\cup \big(\underline{P}_I\cap B_\varepsilon(\underline{V}_I^c)\big).\]
It is easy to see that these satisfy the following:
\[\underline{W}_I\sqsubset \underline{U}_I,\qquad \underline{W}_I\cap \underline{s}_I^{-1}(0) = \underline{\psi}_I^{-1}(Z_I),\]
\[\overline{\underline{W}_I}\cap \underline{s}_I^{-1}(0) = \underline{\psi}_I^{-1}(\overline{Z_I}),\qquad \underline{W}_I\cap \underline{U}_I^c = \underline{V}_I^c.\]
Thus, the sets $\underline{\mathcal{W}} = \{\underline{W}_I\}$ satisfy conditions $(\ref{reduction1})$ and $(\ref{reduction3})$ of Definition \ref{reductiondef} and have the desired intersection property $\underline{W}_I\cap \underline{U}_I^c = \underline{V}_I^c$. The proof of Proposition \ref{reduction} in \cite{mwtop} contains a procedure to construct a reduction $\underline{\mathcal{V}} = \{\underline{V}_I\}$ of $\underline{\mathcal{K}}$ from a $\underline{\mathcal{W}}$ satisfying conditions $(\ref{reduction1})$ and $(\ref{reduction3})$. So to complete the proof of Lemma \ref{treduction} we just need to check that the resulting $\underline{V}_I$ satisfies $\underline{V}_I\cap \underline{U}_I^c = \underline{V}_I^c$. Then lifting the reduction $\underline{\mathcal{V}}$ of $\underline{\mathcal{K}}$ to a reduction $\mathcal{V}$ of $\mathcal{K}$ will complete the proof. We will now summarize the aforementioned procedure to construct $\underline{\mathcal{V}}$. Define
\[C(I) := \{J\in\mathcal{I}_{\mathcal{K}}~|~I\subset J\textnormal{ or } J\subset I\},\]
and for each $J\notin C(I)$ define
\[Y_{IJ} := \overline{\underline{W}_I}\cap\pi_{\underline{\mathcal{K}}}^{-1}\left(\pi_{\underline{\mathcal{K}}}(\overline{\underline{W}_J})\right)\]
and let $B(Y_{IJ})\subset \underline{U}_I$ be a closed neighborhood of $Y_{IJ}$ for each $J\notin C(I)$ such that
\[B(Y_{IJ})\cap \underline{\psi}_I^{-1}(\overline{Z_I})=\emptyset.\]
Then \cite{mwtop} proves that
\[\underline{V}_I:= \underline{W}_I\setminus\bigcup_{J\notin C(I)}B(Y_{IJ})\]
is a reduction. Thus, in order to complete the proof of Lemma \ref{treduction}, we just need to check that $B(Y_{IJ})\cap \underline{U}_I^c=\emptyset$. To see this, note that $\pi_{\underline{\mathcal{K}}}\big(\underline{P}_I\cap B_\varepsilon(\underline{V}_I^c)\big)$ lies in a $\varepsilon$-neighborhood of $\pi_{\underline{\mathcal{K}_c}}(\overline{\underline{V}_I^c})\subset |\mathcal{K}|$. So if $\varepsilon>0$ is chosen small enough
\[\pi_{\underline{\mathcal{K}}}\big(\underline{P}_I\cap B_\varepsilon(\underline{V}_I^c)\big)\cap\pi_{\underline{\mathcal{K}}}\big(\underline{P}_I\cap B_\varepsilon(\underline{V}_I^c)\big)\neq\emptyset\quad\Rightarrow\quad\pi_{\underline{\mathcal{K}_c}}(\overline{\underline{V}_I^c})\cap\pi_{\underline{\mathcal{K}_c}}(\overline{\underline{V}_J^c})\neq\emptyset\quad\Rightarrow\quad I\subset J\textnormal{ or } J\subset I.\]
Therefore, $Y_{IJ}\cap \underline{U}_I^c=\emptyset$ and hence we can choose $B(Y_{IJ})$ such that $B(Y_{IJ})\cap \underline{U}_I^c=\emptyset$.
\end{proof}

The next lemma states that given a compatible transverse reduction, one can construct a perturbation section compatible with the transverse subatlas. Its proof is contained in Section \ref{tsections}. The precise definition of an \textit{adapted perturbation} is given in Section \ref{originalsection}. In particular, an adapted perturbation is admissible, transverse, and precompact as in Definition \ref{perturbdef}.

\begin{lemma}\label{tsection}
Let $\mathcal{K}_c$ be a tame transverse subatlas of a tame atlas $\mathcal{K}$.  Let $\mathcal{C}\sqsubset\mathcal{V}$ be nested reductions for $\mathcal{K}$ and $\mathcal{C}_c\sqsubset\mathcal{V}_c$ nested reductions for $\mathcal{K}_c$ such that $\mathcal{C},\mathcal{C}_c$ and $\mathcal{V},\mathcal{V}_c$ are compatible transverse reductions. Let $0<\delta<\delta_{\mathcal{V}_c}$ and $0<\sigma<\sigma(\delta,\mathcal{V},\mathcal{C})$. Then there exists a $(\mathcal{V},\mathcal{C},\delta,\sigma)$-adapted perturbation $\nu$ of $s_{\mathcal{K}}|_{\mathcal{V}}$ such that $\nu_c := \nu|_{\mathcal{V}_c}$ is a $(\mathcal{V}_c,\mathcal{C}_c,\delta,\sigma)$-adapted perturbation of $s^c_{\mathcal{K}_c}|_{\mathcal{V}_c}$.
\end{lemma}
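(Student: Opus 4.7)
The plan is to adapt the iterative construction of an adapted perturbation from Section \ref{originalsection} so as to control both the full atlas $\mathcal{K}$ and the transverse subatlas $\mathcal{K}_c$ simultaneously. The construction will proceed by induction on $|I|$ in an appropriate ordering of $\mathcal{I}_{\mathcal{K}}$: at each stage one first determines $\nu_I$ on the region forced by admissibility from previously constructed $\nu_J$ for $J \subsetneq I$, then extends to the rest of $V_I$ by adding a small, compactly supported smooth perturbation. The modification needed for the relative setting is to choose this extension from a smaller residual class of perturbations that simultaneously make $s_I + \nu_I$ transverse to $0$ on $V_I$ and its restriction $s_I^c + \nu_I|_{V_I^c}$ transverse to $0$ on $V_I^c$.

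First, I would check that the admissibility and precompactness conditions for $\nu_c$ come essentially for free. By condition (vii) of Definition \ref{subatlasdeff}, the coordinate-change data of $\mathcal{K}_c$ is the literal restriction of that of $\mathcal{K}$, so the equality $\nu_I \circ \rho_{IJ} = \nu_J$ on $\widetilde{V}_{IJ}$ immediately restricts to the equality $\nu_I^c \circ \rho_{IJ}^c = \nu_J^c$ on $\widetilde{V}_{IJ}^c := \widetilde{V}_{IJ} \cap U_J^c$. Precompactness transfers via the compatibility of the reductions supplied by Lemma \ref{treduction}: since $V_I^c = V_I \cap U_I^c$ and $C_I^c = C_I \cap U_I^c$, the inclusion $(s_I|_{V_I} + \nu_I)^{-1}(0) \subset \pi_{\mathcal{K}}(\mathcal{C})$ intersects with $U_I^c$ to give the corresponding inclusion for $\mathcal{K}_c$. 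The size estimates encoded in the parameters $\delta, \sigma$ likewise restrict, since the admissible metrics on $V_I^c$ are induced from those on $V_I$.

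The essential new input is transversality. At each inductive step, the perturbation $\nu_I$ is chosen from a space of smooth maps $V_I \to E_I$ supported in a prescribed region. The subset of such perturbations that makes $s_I + \nu_I$ transverse to $0$ on $V_I$ is open and dense by the standard Sard argument. Because $U_I^c = f_I^{-1}(C)$ is a smooth codimension-$d_c$ submanifold with $f_I \pitchfork C$ and $s_I^c = s_I|_{U_I^c}$, the same Sard argument, applied this time to the smooth map $s_I + \nu_I$ restricted to the submanifold $V_I^c$, shows that the perturbations making $s_I^c + \nu_I|_{V_I^c}$ transverse to $0$ also form an open dense set. The intersection is therefore open and dense, and one may select $\nu_I$ from it. Condition (iv) of Definition \ref{tdefinition} ensures that for $I \notin \mathcal{I}_{\mathcal{K}_c}$, the set $V_I^c$ is empty in a neighborhood of the support of the perturbation, so no additional constraint arises there.

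The main obstacle I anticipate is the careful bookkeeping across the inductive steps: one must simultaneously preserve the admissibility, cocycle-type compatibilities, and size bounds for both $\mathcal{K}$ and $\mathcal{K}_c$, while ensuring that the chosen perturbations at each stage remain compatible with all previously constructed restrictions. In particular, when propagating $\nu_J$ via $\rho_{IJ}$ into part of $V_I$, one has to verify that the restriction of this pushforward to $V_I^c$ already realizes the corresponding propagation in $\mathcal{K}_c$; this ultimately reduces to the identity $\rho_{IJ}(\widetilde{V}_{IJ}^c) = U_I^c \cap \rho_{IJ}(\widetilde{V}_{IJ})$, which follows from the transverse-subatlas hypotheses. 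Once this bookkeeping is in place, the proof reduces to the same iterative scheme as in Section \ref{originalsection}, carried out on the family of pairs $(V_I, V_I^c)$ instead of on the $V_I$ alone.
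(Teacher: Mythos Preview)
Your proposal is correct and follows essentially the same approach as the paper: both observe that conditions (a), (c), (d), (e) of Definition~\ref{adaptedpert} restrict automatically to $\mathcal{K}_c$ via the compatibilities $V_I^c = V_I\cap U_I^c$, $N_{JI}^{c,k}=N_{JI}^k\cap U_J^c$, and then run the induction of Section~\ref{originalsection} with the single extra requirement that $(s_I|_{V_I^c}+\nu_I|_{V_I^c})\pitchfork 0$ at each step. The only difference is in how the final small perturbation is produced: the paper does it sequentially---first choosing a small $\nu_\pitchfork^c$ supported in $P\cap V_J^{c,k+1}$ to achieve transversality on $V_J^c$, extending by a cutoff, and then choosing $\nu_\pitchfork$ with support in $P\setminus B_{\varepsilon/2}(V_J^{c,k+1})$ to achieve transversality on $V_J$---whereas you propose selecting $\nu_\pitchfork$ directly from the intersection of the two open-dense transversality conditions; these are equivalent implementations of the same idea.
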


We can now combine Lemmas \ref{ttaming}, \ref{treduction}, and \ref{tsection} to prove Proposition \ref{tprop}.

\begin{proof}[Proof of Proposition \ref{tprop}]
By Lemma \ref{ttaming}, we may assume that $\mathcal{K}_c$ is a tame transverse subatlas of $\mathcal{K}$. We then apply Lemma \ref{treduction} to get a compatible transverse reduction $\mathcal{V}_c$ and $\mathcal{V}$. Finally, we apply Lemma \ref{tsection} to get compatible perturbation sections $\nu_c$ and $\nu$. This will produce corresponding perturbed zero sets $|\mathbf{Z}^{\nu_c}| = |(s_{\mathcal{K}_c}|_{\mathcal{V}_c}^{\setminus\Gamma}+\nu_c)^{-1}(0)|$ and $|\mathbf{Z}^{\nu}|=|(s_{\mathcal{K}}|_\mathcal{V}^{\setminus\Gamma}+\nu)^{-1}(0)|$. Moreover, each branch $Z_I=(s_I|_{V_I}+\nu_I)^{-1}(0)$ of $|\mathbf{Z}^{\nu}|$ comes equipped with an evaluation map $f$. It follows from Lemma \ref{treduction} that the domains of the reduction $\mathcal{V}_c$ are cut out transversally by the evaluation map from the domains of $\mathcal{V}$. Each branch $Z_I^c=(s_I^c|_{V_I^c}+\nu_I^c)^{-1}(0)$ of $|\mathbf{Z}^{\nu_c}|$ is cut out transversally by $s_I^c|_{V_I^c}+\nu_I^c$, each $Z_I$ is cut out transversally by $s_I|_{V_I}+\nu_I$, and $(s_I+\nu_I)|_{f^{-1}(C)} = s_I^c+\nu_I^c$. In particular, $\dim Z_I^c = \dim Z_I - $ codim $N$. Therefore, on each branch $f|_{Z_I}:Z_I\rightarrow N$ is transverse to $C\subset N$ and
\[f|_{Z_I}^{-1}(C) = Z_I^c.\]
Therefore, using Proposition \ref{wbm}, $|\mathbf{Z}^{\nu_c}|_{\mathcal{H}}$ is a compact weighted branched submanifold of $|\mathbf{Z}^{\nu}|_{\mathcal{H}}$ with each branch cut out transversally by the evaluation map.
\end{proof}

We now aim to apply Proposition \ref{tprop} to the atlas $\mathcal{K}_c$ on $X_c = \overline{\mathcal{M}}_{0,k}(A,J;c)$. However, $\mathcal{K}_c$ is a $C^1$ SS atlas, so we cannot directly apply Proposition \ref{tprop}. Nonetheless, Proposition \ref{tprop} continues to hold in the $C^1$ SS category with appropriate definitions.
\begin{definition}\label{c1tdef}
Let $X_c$ be a closed subset of a metric space $X$. Let $\mathcal{K}$ be a $C^1$ SS atlas on $X$. An atlas $\mathcal{K}_c$ on $X_c$ is called a \textbf{$\mathbf{C^1}$ SS transverse subatlas} if the conditions of Definition \ref{tdefinition} for being a transverse subatlas hold in the $C^1$ SS category. That is we require a $C^1$ SS evaluation map transverse (in the usual sense) to a submanifold $C$ and the domains $U_I^c$ of $\mathcal{K}_c$ are $C^1$ SS submanifolds of the domains of $\mathcal{K}$.
\end{definition}
\begin{proposition}\label{c1tprop}
Let $\mathcal{K}_c$ be a $C^1$ SS transverse subatlas of $\mathcal{K}$. Then the conclusion of Proposition \ref{tprop} holds.
\end{proposition}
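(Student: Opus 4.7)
The plan is to follow the three-step structure of the proof of Proposition \ref{tprop} (taming, compatible reduction, compatible perturbation), checking at each stage that the arguments survive weakening ``smooth'' to ``$C^1$ SS.'' The first two steps require essentially no modification. The taming procedure in Lemma \ref{ttaming} is purely topological: it builds shrinkings of footprint covers using open sets, $\delta$-neighborhoods, and the precompact domains produced by Lemma \ref{514}; nothing in this construction uses smoothness of the domains, and the intersection $(U_I^c)'=U_I'\cap U_I^c$ remains a $C^1$ SS submanifold of $U_I'$ because $U_I^c$ was a $C^1$ SS submanifold of $U_I$. Likewise the compatible reduction of Lemma \ref{treduction} only uses admissible metrics, the normal bundle of $\underline{V}_I^c$ inside $\underline{U}_I$ (which exists in the $C^1$ category, hence certainly for $C^1$ SS submanifolds since the ambient smooth structure of each individual stratum is respected), and the topological arguments of \cite{mwtop}. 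So we obtain a tame $C^1$ SS transverse subatlas and a compatible transverse reduction $\mathcal{V}_c, \mathcal{V}$ exactly as before.

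The only nontrivial step is the $C^1$ SS analogue of Lemma \ref{tsection}: given compatible reductions, we must construct a single perturbation $\nu$ of $s_{\mathcal{K}}|_{\mathcal{V}}$ that is admissible, $C^1$ SS transverse, and precompact, and whose restriction $\nu_c := \nu|_{\mathcal{V}_c}$ is likewise a $C^1$ SS admissible, transverse, precompact perturbation of $s^c_{\mathcal{K}_c}|_{\mathcal{V}_c}$. The key input is Proposition \ref{c1section} (the existence of $C^1$ SS transverse perturbations used in the proof of Theorem \ref{c1thm}), which we combine with the inductive construction used to prove Lemma \ref{tsection} in Section \ref{tsections}. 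Concretely, I would revisit the iterative construction of the perturbation sections $\nu_I$, at each stage applying a $C^1$ SS version of the transversality genericity result (Lemma \ref{c1tran}) in place of ordinary smooth genericity. Because SS transversality is an open and generic condition on each stratum simultaneously, and because the domains $V_I^c\subset V_I$ are $C^1$ SS submanifolds cut out transversally by the evaluation map (by Condition (\ref{conditioniii}) of Definition \ref{tdefinition} and the compatibility of the reduction), the pertubations produced on $V_I$ restrict to $C^1$ SS transverse perturbations on $V_I^c$. Admissibility and precompactness pass to the restriction by the same estimates as in the smooth case.

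With this $C^1$ SS compatible perturbation in hand, the final step proceeds verbatim as in the proof of Proposition \ref{tprop}, with Lemma \ref{sstransverselemma} replacing the usual smooth implicit function theorem: each branch $Z_I=(s_I|_{V_I}+\nu_I)^{-1}(0)$ is a $C^1$ SS manifold, the evaluation map $f|_{Z_I}:Z_I\to N$ is SS transverse to the smooth submanifold $C\subset N$ (this transversality follows from SS transversality of $f_I$ to $C$ on $V_I$, which is Condition (\ref{conditioniii}) of Definition \ref{c1tdef}, combined with transversality of $s_I+\nu_I$ to zero), and by Lemma \ref{sstransverselemma} the preimage $f|_{Z_I}^{-1}(C)$ is a $C^1$ SS submanifold of $Z_I$ that coincides with $Z_I^c$. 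Applying Proposition \ref{wbm} (whose proof is category-independent, using only the local manifold structure just established) to both $\mathbf{Z}^\nu$ and $\mathbf{Z}^{\nu_c}$ gives compact weighted branched $C^1$ SS manifolds $|\mathbf{Z}^\nu|_{\mathcal{H}}$ and $|\mathbf{Z}^{\nu_c}|_{\mathcal{H}}$ with $|\mathbf{Z}^{\nu_c}|_{\mathcal{H}}$ a weighted branched submanifold of the former, cut out transversally branch-by-branch by $f$.

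The main obstacle is therefore concentrated entirely in the perturbation step: one must verify that the delicate iterative perturbation construction of Section \ref{sections}, when adapted to the $C^1$ SS setting as in the proof of Theorem \ref{c1thm}, can be carried out \emph{simultaneously} on the ambient atlas and its transverse subatlas. The geometric reason this works is that $U_I^c=f_I^{-1}(C)$ is a $C^1$ SS submanifold on which the obstruction bundle is the restriction of the ambient obstruction bundle and on which the section is the restriction of the ambient section; thus any perturbation of the ambient section automatically restricts, and SS transversality on each stratum of $V_I$ restricts to SS transversality on each stratum of $V_I^c$ by the transversality hypothesis on $f_I$. Condition (\ref{conditioniv}) of Definition \ref{tdefinition} ensures that indices $I\notin\mathcal{I}_{\mathcal{K}_c}$ contribute nothing near $X_c$, so the inductive construction on $\mathcal{K}$ is never forced to make incompatible choices over $\mathcal{K}_c$.
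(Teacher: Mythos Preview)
Your overall three-step structure matches the paper's proof exactly: Lemmas \ref{ttaming} and \ref{treduction} are topological and carry over unchanged, the content is in the $C^1$ SS version of Lemma \ref{tsection} (which the paper packages as Lemma \ref{c1transverse}), and then the proof of Proposition \ref{tprop} runs verbatim.

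However, your justification for the perturbation step contains a genuine error. You claim that ``SS transversality on each stratum of $V_I$ restricts to SS transversality on each stratum of $V_I^c$ by the transversality hypothesis on $f_I$,'' and that ``any perturbation of the ambient section automatically restricts.'' This is false: if $s_I+\nu_I$ is transverse to $0$ on $V_I$ and $f_I\pitchfork C$, it does \emph{not} follow that $(s_I+\nu_I)|_{V_I^c}$ is transverse to $0$. (Take $V_I=\mathbb{R}^2$, $f_I(x,y)=x$, $C=\{0\}$, and $s_I+\nu_I=x$: this is transverse to $0$ on $V_I$ but identically zero on $V_I^c=\{x=0\}$.) Transversality of the ambient perturbed section does not descend to the subatlas for free.

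What the paper actually does in Lemma \ref{tsection} (and its $C^1$ SS upgrade Lemma \ref{c1transverse}) is add transversality on $V_I^c$ as a separate inductive requirement, condition $(f)$, and then build the final perturbation $\nu_\pitchfork$ in two stages: first choose a small $\nu_\pitchfork^c$ supported near $V_J^c$ to achieve transversality of the restriction, extend it by cutoff, and only then choose a further $\nu_\pitchfork$ supported away from $V_J^c$ to achieve ambient transversality without disturbing the subatlas. Your proposal correctly gestures at revisiting the iterative construction and invoking Lemma \ref{c1tran}, but the mechanism you cite for why it works is the wrong one; the actual mechanism is this two-stage transversality extension.
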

\begin{proof}
The proof of Proposition \ref{tprop} relies on Lemmas \ref{ttaming}, \ref{treduction}, and \ref{tsection}. Lemmas \ref{ttaming} and \ref{treduction} hold in the $C^1$ SS category with no changes to their proofs. Lemma \ref{tsection} also holds in the $C^1$ SS category; for a discussion of this see Lemma \ref{c1transverse}. With these three lemmas, the proof of Proposition \ref{tprop} carries though in the $C^1$ SS category.
\end{proof}

We can apply Proposition \ref{c1tprop} to the atlas $\mathcal{K}_c$ on $X_c = \overline{\mathcal{M}}_{0,k}(A,J;c)$. To do this, we first show that $\mathcal{K}_c$ is a $C^1$ SS transverse subatlas of an atlas $\mathcal{K}$ on $X$.
\begin{lemma}\label{xcit}
Let $\mathcal{K}_c$ be a Kuranishi atlas for
\[X_c = \overline{\mathcal{M}}_{0,k}(A,J;c)\] as described in Section \ref{homconstruction}, so $\mathcal{K}_c$ is obtained by constraining the evaluation map $ev_k$ by a submanifold representative $Z_c$ of $c$ from (\ref{cdef}).  Then there is a $C^1$ SS Kuranishi atlas $\mathcal{K}$ for $X=\overline{\mathcal{M}}_{0,k}(A,J)$ such that $\mathcal{K}_c$ is a $C^1$ SS transverse subatlas of $\mathcal{K}$ with $ev_k$ as the evaluation map $f$ required in Definition \ref{tdefinition}.
\end{lemma}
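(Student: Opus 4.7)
The plan is to build $\mathcal{K}$ by combining the ``ambient'' charts that already underlie $\mathcal{K}_c$ with additional charts centered at points of $X\setminus X_c$ needed to cover all of $X$. Recall from Section \ref{homconstruction} that each basic chart $\mathbf{K}_i^c$ of $\mathcal{K}_c$, centered at some $p_i\in X_c$, is produced by first constructing a basic chart $\mathbf{K}_i$ for $X$ at $p_i$ exactly as in Section \ref{gwcharts}, choosing the obstruction space $E_i$ large enough so that $ev_k:\widehat{U}_i\to M^k$ is SS transverse to $Z_c$, and then setting $\widehat{U}_i^c\subset ev_k^{-1}(Z_c)\cap\widehat{U}_i$ before cutting down with the slicing manifold. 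In particular the obstruction space, isotropy group, section, and slicing data of $\mathbf{K}_i^c$ all coincide with those of $\mathbf{K}_i$, and $U_i^c=ev_k^{-1}(Z_c)\cap U_i$ is a $C^1$ SS submanifold of $U_i$ by Lemma \ref{sstransverselemma}.

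First, I would take the (finitely many) ambient charts $\{\mathbf{K}_i\}_{i\in\mathcal{I}_{\mathcal{K}_c}}$ underlying the basic charts of $\mathcal{K}_c$. Their footprints $\{F_i\}$ cover an open neighborhood of the compact set $X_c$. Since $X\setminus\bigcup_{i\in\mathcal{I}_{\mathcal{K}_c}}F_i$ is a compact subset of $X\setminus X_c$, I would add finitely many basic charts $\mathbf{K}_j$ centered at additional points of $X\setminus X_c$, constructed again via Section \ref{gwcharts}, and shrink their footprints so that each lies outside some fixed $\varepsilon$-neighborhood of $X_c$ in $X$; this is possible since $X_c$ is closed and the added center points are a positive distance from $X_c$. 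Translating through $ev_k$ (which is continuous) gives the same $\varepsilon$-separation in $M^k$, establishing condition (iv) of Definition \ref{tdefinition}.

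Next I would build the transition data $(\mathbf{K}_J,\widehat{\Phi}_{IJ})$ for $\mathcal{K}$ exactly as in Section \ref{gwcharts} from this covering family. For any $I\in\mathcal{I}_{\mathcal{K}}$ with $F_I\cap X_c\neq\emptyset$, all indices of $I$ must come from $\mathcal{I}_{\mathcal{K}_c}$ (by the $\varepsilon$-separation of the added charts), so $I\in\mathcal{I}_{\mathcal{K}_c}$ and the sum chart $\mathbf{K}_I$ agrees with the ambient chart over which $\mathbf{K}_I^c$ of $\mathcal{K}_c$ was built; the submanifold $U_I^c=ev_k^{-1}(Z_c)\cap U_I$ is then automatically the Kuranishi chart domain of $\mathcal{K}_c$. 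The coordinate change domains satisfy $\widetilde{U}_{IJ}^c=ev_k^{-1}(Z_c)\cap\widetilde{U}_{IJ}$ and $\rho_{IJ}^c=\rho_{IJ}|_{\widetilde{U}_{IJ}^c}$ because $ev_k$ is preserved by the coordinate changes (Remark \ref{maps}). Taking $f_I:=ev_k$, $N:=M^k$, and $C:=Z_c$, all conditions of Definitions \ref{subatlasdeff}, \ref{tdefinition}, and \ref{c1tdef} then follow from the constructions, with SS transversality of $ev_k$ to $Z_c$ built in from the start.

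The main obstacle is ensuring that a single consistent choice of obstruction spaces $\{E_i\}$ simultaneously makes every ambient $\widehat{U}_I$ a $C^1$ SS manifold \emph{and} keeps $ev_k$ SS transverse to $Z_c$ on all ambient sum charts $\widehat{U}_I$ with $I\in\mathcal{I}_{\mathcal{K}_c}$. Both conditions require only that $E_I$ surjects onto finitely many cokernels, so by enlarging each $E_i$ if necessary one can arrange both conditions at once, as was done locally in the construction of $\mathcal{K}_c$. Once this compatibility is set up, verifying the remaining bookkeeping conditions (iii) and (vi) of Definition \ref{subatlasdeff} is routine since everything is produced by restricting the same ambient data along $ev_k^{-1}(Z_c)$.
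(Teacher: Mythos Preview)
Your proposal is correct and follows essentially the same route as the paper's proof: use the ambient Gromov--Witten charts underlying $\mathcal{K}_c$ to cover a neighborhood of $X_c$, then add basic charts centered away from $X_c$, enlarging obstruction spaces as needed for SS transversality of $ev_k$ to $Z_c$. One small imprecision worth fixing: condition~(iv) of Definition~\ref{tdefinition} requires $ev_k(U_I)$ (the image of the full \emph{domain}, not just the footprint) to lie outside an $\varepsilon$-neighborhood of $Z_c$ in $M^k$, so you should shrink the added domains $U_j$ rather than only their footprints $F_j$---but this is easily arranged by continuity of $ev_k$ on each $U_j$, and the paper's own argument is equally terse on this point.
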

\begin{proof}
The charts for $\mathcal{K}_c$ are constructed by choosing center points $\{[\Sigma_i,\mathbf{z}_i,f_i]\in X_c\}$ and building charts whose footprint contains this center point. The charts for $\mathcal{K}$ are also constructed in the same manner; by construction the chart for $\mathcal{K}_c$ centered at $[\Sigma_i,\mathbf{z}_i,f_i]\in X_c$ is a subset of the chart for $\mathcal{K}$ centered at $\{[\Sigma_i,\mathbf{z}_i,f_i]\in X_c\}$ cut out transversally by the evaluation map $ev_k$. Furthermore, by making the obstruction bundle sufficiently large, we can assume $ev_k$ is SS transverse to the submanifold representative $Z_c\subset M^k$ of $c\in H_{*}(M^k)$ from (\ref{cdef}). So forming the atlas $\mathcal{K}$ by first choosing center points $\{[\Sigma_i,\mathbf{z}_i,f_i]\in X_c\}$ to cover a neighborhood of $X_c$ and then choosing points outside a neighborhood of $X_c$, one can choose the domains of $\mathcal{K}$ to satisfy Definition \ref{tdefinition}. The fact that $ev_k$ is SS transverse to $Z_c$ implies that the domains of $\mathcal{K}_c$ are $C^1$ SS submanifolds and hence satisfy Definition \ref{c1tdef}.
\end{proof}

\begin{proof}[Proof of Theorem \ref{homologyagree}]
Let $\mathcal{K}_c$ be a Kuranishi atlas for $X_c$. By Lemma \ref{xcit}, there is a $C^1$ SS Kuranishi atlas $\mathcal{K}$ for $X$ such that $\mathcal{K}_c$ is a $C^1$ SS transverse subatlas of $\mathcal{K}$. It suffices to choose these particular atlases because atlases constructed in this manner are cobordant and by \cite{mwtop, mwfund, mwiso} the resulting invariants are well defined. We can then apply Proposition \ref{c1tprop} to construct $|\mathbf{Z}^{\nu_c}|_{\mathcal{H}}$ as a compact weighted branched submanifold of $|\mathbf{Z}^{\nu}|_{\mathcal{H}}$ with each branch cut out transversally by the evaluation map $ev_k$. The virtual fundamental class $[X_c]_{\mathcal{K}_c}^{vir}$ is constructed from $|\mathbf{Z}^{\nu_c}|_{\mathcal{H}}$. On the other hand $[X]_{\mathcal{K}}^{vir}$ is constructed from $|\mathbf{Z}^{\nu}|_{\mathcal{H}}$ and $|\mathbf{Z}_{\nu_c}^c|_{\mathcal{H}}$ is cut out transversally by $ev_k:|\mathbf{Z}^{\nu}|_{\mathcal{H}}\rightarrow M^k$. Therefore, $(ev_k)_{*}([X]_{\mathcal{K}}^{vir})\cdot (c_1\times\cdots\times c_k)$ agrees with $[X_c]_{\mathcal{K}_c}^{vir}$. This proves Theorem \ref{homologyagree}.
\end{proof}

\subsection{Domain constraints}\label{domainconstruction}
One can also consider Gromov-Witten invariants where the homology type of the (stabilized) domain is constrained. Formally, this is the same problem as for constraints $c\in H_{*}(M)$ because, as will be discussed, the invariants are calculated by a homological intersection with $\beta\in H_{*}(\overline{\mathcal{M}}_{0,k})$. However, the stratification of $\overline{\mathcal{M}}_{0,k}$ must be taken into account due to the fact that $\overline{\mathcal{M}}_{0,k}$ is considered as a $C^1$ SS (not smooth) manifold $\overline{\mathcal{M}}_{0,k}^{new}$ as in Proposition \ref{dm1}. Therefore, we begin this section by first discussing the homology of Deligne-Mumford space, before later considering Gromov-Witten invariants with constrained domains.

First, we will describe a convenient generating set for $H_{*}(\overline{\mathcal{M}}_{0,k})$ and two choices of cycles, $Y_\beta $ and $Z_\beta$, representing these generators that will be useful later in this section and in Section \ref{gwaxioms}. For a set $I\subset\{1,\ldots,k\}$, $|I|\geq 3$, define the homology class $\beta_{k,I}\in H_{*}(\overline{\mathcal{M}}_{0,k})$ by
\[\beta_{k,I} = PD(\pi_{k,I}^{*}PD([pt]))\in H_{2k-2|I|}(\overline{\mathcal{M}}_{0,k}),\]
where $\pi_{k,I}:\overline{\mathcal{M}}_{0,k}\rightarrow\overline{\mathcal{M}}_{0,|I|}$ is the map which forgets the marked points not belonging to the set $I$. Next we describe a submanifold $Y_{k,I}$ that represents the class $\beta_{k,I}$. Write $I=:\{i_0,\ldots,i_\ell\}$ with $i_0<\cdots<i_\ell$. Let $T$ be a tree with one node and define $\mathbf{y}_I\in\overline{\mathcal{M}}_{0,|I|}$ to be the unique element corresponding to the $I$-labelled tree modelled on $T$ with marked points $i_j=j\in S^2 = \mathbb{C}\cup\{\infty\}$. So $\mathbf{y}_I$ is a distinguished element in the open top stratum $\mathcal{M}_{0,k}\subset\overline{\mathcal{M}}_{0,k}$.
\begin{lemma}\cite[Lemma 7.5.5\footnote{This lemma is stated in \cite{JHOL} but its proof contains an error. They use a different cycle representative $Z_{k,I} = \pi_{k,I}^{-1}(\mathbf{z}_I)$ of $\beta_{k,I}$ that is described later in Lemma \ref{zki}. Although the cycle they describe does represent the class $\beta_{k,I}$, they incorrectly state that $\mathbf{z}_{I}$ is a regular value of $\pi_{k,I}$ and that $\pi_{k,I}^{-1}(\mathbf{z}_{I})$ is a smooth submanifold. This in turn makes their proof of $(ii)$ incorrect. However, the statements are correct and the proof below is taken virtually verbatim with only a slight change to correct this error.}]{JHOL}\label{755}
Let $I\subset\{1,\ldots,k\}$ be such that $|I|\geq 3$.
\begin{enumerate}[(i)]
\item\label{7551} The class $\beta_{k,I}$ may be represented by the $C^1$ SS submanifold $Y_{k,I}:=\pi_{k,I}^{-1}(\mathbf{y}_I)\subset\overline{\mathcal{M}}_{0,k}$.
\item\label{7552} Denote by $\pi_{0,k}:\overline{\mathcal{M}}_{0,k}\rightarrow\overline{\mathcal{M}}_{0,k-1}$ the map which forgets the last marked point. Then $\beta_{k,I} = PD(\pi_{0,k}^{*}PD(\beta_{k-1,I}))$ when $k\notin I$. Moreover,
    \[(\pi_{0,k})_{*}\beta_{k,I} = \begin{cases}
    \beta_{k-1,I\setminus\{k\}},& k\in I,|I|\geq4\\
    0,& k\notin I.
    \end{cases}\]
\end{enumerate}
\end{lemma}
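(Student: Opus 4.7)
The strategy is to reduce both claims to a careful analysis of the forgetful map $\pi_{k,I}$ near the distinguished point $\mathbf{y}_I$, exploiting the fact that $\mathbf{y}_I$ lies in the open top stratum of $\overline{\mathcal{M}}_{0,|I|}$ together with the factorization $\pi_{k,I} = \pi_{k-1,I}\circ\pi_{0,k}$ that is available when $k\notin I$.

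For part (\ref{7551}), the key step is to verify that $\pi_{k,I}$ is SS transverse (in the sense of Definition \ref{sstransverse}) to $\{\mathbf{y}_I\}$. Because $\mathbf{y}_I$ sits in the open top stratum $\mathcal{M}_{0,|I|}$, only strata $\mathcal{M}_T\subset\overline{\mathcal{M}}_{0,k}$ whose $I$-stabilization is the one-vertex tree can contribute to $\pi_{k,I}^{-1}(\mathbf{y}_I)$; on each such stratum the restriction of $\pi_{k,I}$ is a smooth submersion onto $\mathcal{M}_{0,|I|}$ (a standard fact about forgetful maps between strata of moduli of stable curves), which yields SS transversality at every point of $Y_{k,I}$. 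Lemma \ref{sstransverselemma} then implies that $Y_{k,I} := \pi_{k,I}^{-1}(\mathbf{y}_I)$ is a $C^1$ SS submanifold of $\overline{\mathcal{M}}_{0,k}$ of codimension $2|I|-6$, and the standard identification of a transverse preimage with the pullback of a Poincar\'e dual gives $[Y_{k,I}] = PD(\pi_{k,I}^{*}PD([\textnormal{pt}])) = \beta_{k,I}$.

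For part (\ref{7552}), the pullback identity is immediate from functoriality: when $k\notin I$ one has $\pi_{k,I} = \pi_{k-1,I}\circ\pi_{0,k}$, hence
\[PD(\beta_{k,I}) = \pi_{k,I}^{*}PD([\textnormal{pt}]) = \pi_{0,k}^{*}\pi_{k-1,I}^{*}PD([\textnormal{pt}]) = \pi_{0,k}^{*}PD(\beta_{k-1,I}).\]
For the pushforward, the two cases split. If $k\notin I$, the restriction $\pi_{0,k}|_{Y_{k,I}}$ has generic fibers of real dimension $2$ (an open subset of the universal curve), so $\dim\pi_{0,k}(Y_{k,I}) < \dim Y_{k,I}$ and $(\pi_{0,k})_{*}[Y_{k,I}]$ vanishes on dimensional grounds. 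If $k\in I$ with $|I|\geq 4$, I would use the commutative square
\[\pi_{0,|I|}\circ\pi_{k,I} = \pi_{k-1,I\setminus\{k\}}\circ\pi_{0,k}\]
to show $\pi_{0,k}(Y_{k,I})\subset Y_{k-1,I\setminus\{k\}}$ and then argue that on the open top stratum this restriction is a bijection: for a smooth curve $q$ with its $I\setminus\{k\}$-marked points at their standard positions, the $k$-th marked point of any preimage is forced to sit at the unique remaining standard position on the sphere. Since $\dim Y_{k,I}=\dim Y_{k-1,I\setminus\{k\}} = 2k-2|I|$ and the induced orientations agree with those coming from the complex structures on the forgetful maps, one obtains $(\pi_{0,k})_{*}[Y_{k,I}] = [Y_{k-1,I\setminus\{k\}}] = \beta_{k-1,I\setminus\{k\}}$.

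The main obstacle is the transversality verification on lower strata in part (\ref{7551}): one must show that the restriction of $\pi_{k,I}$ to every stratum $\mathcal{M}_T$ whose $I$-stabilization is trivial is a submersion onto $\mathcal{M}_{0,|I|}$. This is standard in the algebraic setting, but needs translation into the $C^1$ SS framework inherited from Proposition \ref{dm1} and \cite{gluingme}. A secondary concern is the generic-bijectivity claim in the $k\in I$ case, where one must verify that multi-sheeted lifts over boundary strata of $Y_{k-1,I\setminus\{k\}}$ occur only in codimension $\geq 2$; this should follow from the structure of the fibers of $\pi_{0,|I|}$ near the bubbled configurations.
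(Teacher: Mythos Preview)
Your approach is essentially the same as the paper's: verify that $\mathbf{y}_I$ is a regular value of $\pi_{k,I}$ (hence SS transverse) and invoke Lemma~\ref{sstransverselemma} for part~(\ref{7551}); use functoriality of $\pi_{k,I}=\pi_{k-1,I}\circ\pi_{0,k}$ for the pullback identity; and for the pushforward, argue by dimension when $k\notin I$ and by generic bijectivity when $k\in I$. The paper differs only in how it justifies the transversality---rather than appealing to a general submersion fact about forgetful maps on strata, it observes directly that cross ratio coordinates near $\mathbf{y}_I$ in $\overline{\mathcal{M}}_{0,|I|}$ extend to coordinates near any preimage in $\overline{\mathcal{M}}_{0,k}$.

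One point in your $k\in I$ argument needs sharpening. You write that over a smooth $q\in Y_{k-1,I\setminus\{k\}}^\circ$ the $k$-th marked point of a preimage ``is forced to sit at the unique remaining standard position on the sphere,'' implicitly assuming the preimage is smooth. In fact there may already be a marked point $z_i$ with $i\notin I$ at that position; then the unique preimage in $Y_{k,I}$ is the nodal curve obtained by bubbling off $z_i$ and $z_k$ at that point. The paper handles this explicitly and concludes that $\pi_{0,k}$ is injective on $\pi_{0,k}^{-1}(Y_{k-1,I\setminus\{k\}}^\circ)$ (not just on $Y_{k,I}^\circ$), which is what is needed to get degree one. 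Your ``secondary concern'' about multi-sheeted lifts over boundary strata is thus misplaced: the delicate point is already over the top stratum of the target, and it is resolved by allowing the preimage to be nodal.
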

\begin{proof}
To prove $(\ref{7551})$, we note that $\pi_{k,I}$ is a $C^1$ SS map and hence by Lemma \ref{sstransverselemma} it suffices to check that $\mathbf{y}_I$ is a regular value of $\pi_{k,I}$. To do this it suffices to show that for each $\mathbf{z}\in Y_{k,I}$ there are local coordinates near $\mathbf{y}_I$ in $\overline{\mathcal{M}}_{0,|I|}$ that extend to a system of local coordinates near $\mathbf{z}$. This is clearly true by the description of coordinate charts for $\overline{\mathcal{M}}_{0,k}$ and $\overline{\mathcal{M}}_{0,|I|}$ via cross ratios as described in \cite[Appendix D]{JHOL}.

To prove the first assertion of $(\ref{7552})$, assume first that $k\notin I$. Then $\pi_{k,I}=\pi_{k-1,I}\circ\pi_{0,k}$ and the assertion that $\beta_{k,I} = PD(\pi_{0,k}^{*}PD(\beta_{k-1,I}))$ follows from functoriality.

Assume next that $k\in I$ and $|I|\geq 4$. Clearly $\pi_{0,k}(\mathbf{y}_I) = \mathbf{y}_{I\setminus\{k\}}$ and hence $\pi_{0,k}(Y_{k,I}) \subset Y_{k-1,I\setminus\{k\}}$. It is also easily seen that the map $\pi_{0,k}:Y_{k,I}\rightarrow Y_{k-1,I\setminus\{k\}}$ is onto. The submanifolds $Y_{k,I}$ and $Y_{k-1,I\setminus\{k\}}$ carry a stratification induced from $\overline{\mathcal{M}}_{0,|I|}$ and $\mathcal{\mathcal{M}}_{0,|I|-1}$. Let $Y_{k,I}^\circ,Y_{k-1,I\setminus\{k\}}^\circ$ denotes the open top stratum of $Y_{k,I},Y_{k-1,I\setminus\{k\}}$ respectively. The top stratum consists of elements modelled on the tree with one node. All other strata have codimension at least two. It is easily seen that $\pi_{0,k}$ maps $Y_{k,I}^\circ$ to $Y_{k-1,I\setminus\{k\}}^\circ$. Next take $[\mathbf{z}]\in Y_{k-1,I\setminus\{k\}}^\circ$. The element $[\mathbf{z}]$ has a unique preimage $\pi_{0,k}^{-1}([\mathbf{z}])\in Y_{k,I}$. To see this, there is a unique representative $\mathbf{z}$ of $[\mathbf{z}]$ with the marked points $z_{i_0},\ldots,z_{i_{\ell-1}}$ at $0,\ldots,\ell-1\in S^2$. If there is no marked point at $\ell$, then the unique lift $\pi_{0,k}^{-1}([\mathbf{z}])\in Y_{k,I}$ is obtained from $\mathbf{z}$ by adding a $k$th marked point at $\ell$. If there is a marked point $z_i$ with $i\notin I$ at $\ell$, then $\pi_{0,k}^{-1}([\mathbf{z}])\in Y_{k,I}$ is obtained from $\mathbf{z}$ by adding a bubble at $\ell$ with marked points $z_i$ and $z_k$. Therefore, $\pi_{0,k}:Y_{k,I}\rightarrow Y_{k-1,I\setminus\{k\}}$ is injective on $\pi_{0,k}^{-1}(Y_{k-1,I\setminus\{k\}}^\circ)$. In summary, $\pi_{0,k}$ is onto and is a holomorphic diffeomorphism on a subset of $Y_{k,I}$ with complement of codimension at least two. Therefore $(\pi_{0,k})_{*}([Y_{k,I}]) = [Y_{k-1,I\setminus\{k\}}]$ which is the second assertion of $(\ref{7552})$.

Assume finally that $k\notin I$ or $|I|\leq 3$. Then $\pi_{0,k}$ maps $Y_{k,I}$ to a manifold of strictly lower dimension. This proves the final assertion of $(\ref{7552})$.
\end{proof}

The representatives $Y_{k,I}$ described in Lemma \ref{755} prove useful in many circumstances and are the easiest to work with because they $C^1$ SS submanifolds formed by preimages of points in the top stratum of $\overline{\mathcal{M}}_{0,k}$. However, in the context of Gromov-Witten theory one is naturally lead to consider nodal representatives as well. So whereas $Y_{k,I}$ is formed as the preimage of the smoothest points, we describe another cycle representative $Z_{k,I}$ of $\beta_{k,I}$ that is formed as the preimage of the most nodal points.\footnote{These are the representatives considered in \cite{JHOL}.} Write $I=\{i_0,\ldots,i_\ell\}$. Let $T$ be the tree with $\ell-1$ vertices $\alpha_1,\ldots,\alpha_{\ell-1}$ such that there is an edge from $\alpha_i$ to $\alpha_{i+1}$. Let $\mathbf{z}_I\in\overline{\mathcal{M}}_{0,|I|}$ be the unique element with the first two marked points on $\alpha_1$, the last two marked points on $\alpha_{\ell-1}$ and one marked point on every other vertex in increasing order. Thus, $\mathbf{z}_{I}$ is the unique element with cross ratios $w_{i_{j_0}i_{j_1}i_{j_2}i_{j_3}}=\infty$ for $0\leq j_0<j_1<j_2\leq\ell$.
\begin{lemma}\label{zki}
The class $\beta_{k,I}$ can be represented by the cycle $Z_{k,I}:=\pi_{k,I}^{-1}(\mathbf{z}_I)$.
\end{lemma}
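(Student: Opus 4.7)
By Lemma~\ref{755}(\ref{7551}) the class $\beta_{k,I}$ is represented by $Y_{k,I}$, so it suffices to construct an oriented rational chain in $\overline{\mathcal{M}}_{0,k}$ whose boundary equals $Z_{k,I}-Y_{k,I}$ (up to a sign fixed by orientations). The plan is to build such a chain by pulling back a path connecting $\mathbf{y}_I$ and $\mathbf{z}_I$ in $\overline{\mathcal{M}}_{0,|I|}$ under the forgetful map $\pi_{k,I}$. The case $|I|=3$ is trivial since then $\overline{\mathcal{M}}_{0,|I|}$ is a point and $Y_{k,I}=Z_{k,I}=\overline{\mathcal{M}}_{0,k}$, so throughout we assume $|I|\geq 4$.

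First I would choose a smooth embedded oriented path $\gamma:[0,1]\to\overline{\mathcal{M}}_{0,|I|}$ with $\gamma(0)=\mathbf{y}_I$, $\gamma(1)=\mathbf{z}_I$, and $\gamma((0,1))$ contained in the top open stratum $\mathcal{M}_{0,|I|}^\circ\subset\overline{\mathcal{M}}_{0,|I|}$. Such a $\gamma$ exists because $\overline{\mathcal{M}}_{0,|I|}$ is a smooth compact connected complex manifold and $\mathcal{M}_{0,|I|}^\circ$ is open and dense. Over $\mathcal{M}_{0,|I|}^\circ$ the forgetful map $\pi_{k,I}$ is a proper smooth submersion (a locally trivial fiber bundle in the orbifold sense), so $W^\circ:=\pi_{k,I}^{-1}(\gamma((0,1)))$ is a smooth oriented $(2k-2|I|+1)$-dimensional manifold. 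Its closure in $\pi_{k,I}^{-1}(\gamma([0,1)))$ adds exactly $Y_{k,I}$ at $t=0$ (since $\mathbf{y}_I$ is a regular value of $\pi_{k,I}$ by Lemma~\ref{755}(\ref{7551})) and is a smooth manifold with boundary $Y_{k,I}$.

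The main step is to analyze the closure $\overline{W}:=\pi_{k,I}^{-1}(\gamma([0,1]))$ in all of $\overline{\mathcal{M}}_{0,k}$. Properness of $\pi_{k,I}$ gives $\overline{W}=W^\circ\cup Y_{k,I}\cup Z_{k,I}$. Using cross-ratio coordinates on $\overline{\mathcal{M}}_{0,k}$ and $\overline{\mathcal{M}}_{0,|I|}$ as in \cite[Appendix D]{JHOL}, I would describe $\pi_{k,I}$ locally near each point of $Z_{k,I}$ as projection onto the cross-ratios involving only indices in $I$. Choosing $\gamma$ so that, in these coordinates, it approaches $\mathbf{z}_I$ along a generic real one-dimensional direction transverse to the strata through $\mathbf{z}_I$, the preimage $\overline{W}$ is locally a product of $Z_{k,I}$ with a half-open real arc, giving $\overline{W}$ the structure of a $C^1$ SS chain of dimension $2k-2|I|+1$ whose singular-homology boundary is $Y_{k,I}-Z_{k,I}$. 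This homology identity then yields $[Z_{k,I}]=[Y_{k,I}]=\beta_{k,I}$.

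The hard part is exactly the last analysis, since the footnote on Lemma~\ref{755} notes that $\mathbf{z}_I$ is not a regular value of $\pi_{k,I}$: consequently $\overline{W}$ need not be a smooth manifold-with-boundary near $Z_{k,I}$, and one cannot invoke the standard transversality/submersion argument used for $Y_{k,I}$. The key observation making the argument go through is that we only need $\overline{W}$ to give a rational singular chain of the correct dimension with the correct boundary, not a smooth manifold. The local cross-ratio description shows that near $Z_{k,I}$ the degeneracy of $\pi_{k,I}$ is mild enough — at worst a branched multi-to-one factor coming from the isotropy/permutation of gluing parameters — to be absorbed by passing to rational coefficients, so that $\overline{W}$ indeed represents a well-defined chain realizing the desired homology.
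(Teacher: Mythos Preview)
Your approach is in the same spirit as the paper's, which also works by connecting $\mathbf z_I$ to a regular value of $\pi_{k,I}$. The paper, however, does this in one line: pick any sequence $\mathbf z_m\to\mathbf z_I$ with $\mathbf z_m$ in the top stratum of $\overline{\mathcal M}_{0,|I|}$; each $\mathbf z_m$ is a regular value (same argument as for $\mathbf y_I$), so $\pi_{k,I}^{-1}(\mathbf z_m)$ represents $\beta_{k,I}$, and hence so does the limiting fiber $\pi_{k,I}^{-1}(\mathbf z_I)$. The ``hence'' is implicitly the standard fact that for a proper holomorphic map between complex manifolds the cycle class of a fiber is $\mathrm{PD}\bigl(\pi_{k,I}^*\mathrm{PD}[\mathrm{pt}]\bigr)$, which is manifestly independent of the point.

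Your explicit bordism construction would also give this, but the justification you offer for the hard endpoint is not correct. First, the claim that $\overline W$ is ``locally a product of $Z_{k,I}$ with a half-open real arc'' near $Z_{k,I}$ is false. Already for $k=5$, $I=\{1,2,3,4\}$ the map $\pi_{5,I}:\overline{\mathcal M}_{0,5}\to\overline{\mathcal M}_{0,4}\cong\mathbb P^1$ has a nodal fiber over each boundary point, and near each node the local model is $(x,y)\mapsto xy$; the preimage of a small arc $[0,\varepsilon]$ in this model is \emph{not} a product of the nodal fiber with an arc. Second, the explanation in terms of ``isotropy/permutation of gluing parameters'' absorbed by rational coefficients is a red herring: genus zero Deligne--Mumford space has no isotropy, so there is no branching of that kind. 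The failure is purely that $\overline W$ is not a manifold-with-boundary near the singular points of $Z_{k,I}$.

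If you want to salvage the chain argument, the clean route is either (i) observe that $\pi_{k,I}$ is a proper holomorphic map, so every fiber is a complex analytic cycle whose class equals $\mathrm{PD}(\pi_{k,I}^*\mathrm{PD}[\mathrm{pt}])=\beta_{k,I}$ by definition; or (ii) note that $\overline W=\pi_{k,I}^{-1}(\gamma([0,1]))$ is a compact semialgebraic set, hence triangulable, and compute its chain boundary using the local Lefschetz picture at the nodes rather than a product structure. Either way the conclusion holds, but not for the reasons you gave.
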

\begin{proof}
The point $\mathbf{z}_I$ is no longer a regular value of $\pi_{k,I}$. However, it the limit point of sequence $\{\mathbf{z}_m\}_{m\in\mathbb{N}}$ of points where $\mathbf{z}_m$ is in the top stratum of $\overline{\mathcal{M}}_{0,|I|}$. Each $\mathbf{z}_m$ is a regular value of $\pi_{k,I}$ by the reasoning in the proof of Lemma \ref{755}. Hence $\pi_{k,I}^{-1}(\mathbf{z}_I)$ represents the same homology class as $\pi_{k,I}^{-1}(\mathbf{z}_m)$, which is $\beta_{k,I}$.
\end{proof}

The classes $\beta_{k,I}$ are of interest to us due to the following lemma.

\begin{lemma}\label{generate}
The classes $\beta_{k,I}$ generate $H_{*}(\overline{\mathcal{M}_{0,k}})$.
\end{lemma}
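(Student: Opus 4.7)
The plan is to prove this by induction on $k$. The base case $k=3$ is immediate: $\overline{\mathcal{M}}_{0,3}$ is a point, and $\beta_{3,\{1,2,3\}}$ arises from the identity map $\pi_{3,\{1,2,3\}}$, so it represents the generator of $H_0$.

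For the inductive step, I would use the forgetful map $\pi := \pi_{0,k}: \overline{\mathcal{M}}_{0,k} \to \overline{\mathcal{M}}_{0,k-1}$, which realizes $\overline{\mathcal{M}}_{0,k}$ as the universal curve over $\overline{\mathcal{M}}_{0,k-1}$: generic fibers are $\mathbb{P}^1$, and there are $k-1$ tautological sections $\sigma_j$ (specifying where the new marked point collides with an existing one). The goal is to show that $H_{\ast}(\overline{\mathcal{M}}_{0,k})$ is spanned by Poincar\'e duals of classes pulled back along $\pi$, together with ``fiber-direction'' classes coming from a generic fiber and from the sections.

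By the inductive hypothesis, $H_{\ast}(\overline{\mathcal{M}}_{0,k-1})$ is generated by $\{\beta_{k-1,J}\}_{J \subset \{1,\ldots,k-1\},\,|J|\geq 3}$. By Lemma \ref{755}$(\ref{7552})$, one has $\beta_{k,J} = PD(\pi^{\ast}PD(\beta_{k-1,J}))$ whenever $k\notin J$, so the pullback generators are precisely the $\beta_{k,J}$ with $k\notin J$. For the fiber-direction classes: the class of a generic fiber is $[\pi^{-1}(\mathrm{pt})] = \beta_{k,\{1,\ldots,k-1\}}$, and the classes of the sections (together with their combinations with pulled-back classes) can be identified with $\beta_{k,I}$ for appropriate $I$ with $k\in I$. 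In particular, for $|I|=4$ with $k\in I$ the nodal cycle representative $Z_{k,I}$ from Lemma \ref{zki} is exactly the boundary divisor along which the $k$-th marked point bubbles off onto a $\mathbb{P}^1$ together with one other marked point; such divisors are the closures of the images of the relevant sections $\sigma_j$. Higher codimension fiber-direction classes are products of these with pulled-back classes, which by the projection formula again yield $\beta_{k,I}$'s with $k\in I$ via the cycle representatives of Lemma \ref{zki}.

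The main obstacle is that $\pi$ is not a fiber bundle: its fibers degenerate to nodal curves over the boundary divisors of $\overline{\mathcal{M}}_{0,k-1}$. To handle this, I would invoke Keel's inductive construction expressing $\overline{\mathcal{M}}_{0,k}$ as an iterated blow-up built from a $\mathbb{P}^1$-bundle, so that Leray--Hirsch applies cleanly at each stage and the new cohomology classes introduced by each blow-up are the classes of the exceptional divisors, which are themselves identifiable with $\beta_{k,I}$ classes (with $k\in I$) via their description as loci where particular four-point cross ratios take nodal values. Alternatively, a direct Mayer--Vietoris argument splitting $\overline{\mathcal{M}}_{0,k-1}$ into its top stratum (over which $\pi$ is a genuine $\mathbb{P}^1$-bundle with $k-1$ sections, so Leray--Hirsch is classical) and a neighborhood of its boundary (treated inductively on the combinatorial depth of the stratification) reduces the problem to the bundle case. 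Either way, the key quantitative input is that the spectral sequence degenerates due to the existence of global sections of $\pi$, and the identification of the resulting generators with specific $\beta_{k,I}$'s is carried out using Lemmas \ref{755} and \ref{zki}.
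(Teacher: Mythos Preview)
Your inductive approach via the universal curve is plausible and could be made to work, but it is considerably more elaborate than the paper's argument, and several of your key identifications are only asserted rather than verified. In particular, the claim that ``higher codimension fiber-direction classes are products of these with pulled-back classes, which by the projection formula again yield $\beta_{k,I}$'s with $k\in I$'' needs a careful argument: you would have to check that every class produced by the Leray--Hirsch/blow-up decomposition is a $\mathbb{Q}$-linear combination of the $\beta_{k,I}$, not merely that some of the obvious generators are. This is precisely the content of Keel's computation, so your outline amounts to reproving a substantial portion of Keel rather than quoting it.

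The paper instead uses Keel's result directly, but in a different form: Keel shows that the classes $[\overline{\mathcal{M}}_{0,T}]$ of closures of boundary strata (indexed by stable $k$-labelled trees $T$) generate $H_*(\overline{\mathcal{M}}_{0,k})$. The entire proof then reduces to the observation that each $\overline{\mathcal{M}}_{0,T}$ is itself a fiber $\pi_{k,I}^{-1}(\mathbf{w}_I)$ for a suitable $I$ and point $\mathbf{w}_I\in\overline{\mathcal{M}}_{0,|I|}$: one strips $T$ down to the minimal stable tree $T'$ on the same combinatorial type by removing as many marked points as possible, lets $I$ be the set of marked points that remain, and takes $\mathbf{w}_I$ to be the unique point of $\overline{\mathcal{M}}_{0,|I|}$ with shape $T'$. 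Since any fiber of $\pi_{k,I}$ represents $\beta_{k,I}$ (by the same limiting argument as in Lemma~\ref{zki}), this finishes the proof in one step. Your route recovers the same conclusion, but by rebuilding the inductive structure that Keel's theorem already packages; the paper's route trades that work for a single combinatorial identification.
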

\begin{proof}
For a stable labelled tree $T$, let $\overline{\mathcal{M}}_{0,T}\subset \overline{\mathcal{M}}_{0,k}$ denote the closure of the set of stable curves modelled on $T$. The set $\overline{\mathcal{M}}_{0,T}$ is a smooth submanifold and in \cite{keel} Keel proves that all such submanifolds generate the homology of $\overline{\mathcal{M}}_{0,k}$. Therefore, it suffices to prove that for every $T$, $\overline{\mathcal{M}}_{0,T}=Z_{k,I}$ for some $k,I$ where $Z_{k,I}$ is a cycle representing the class $\beta_{k,I}$. To see this, let $T'$ be the stable labelled tree obtained from $T$ by removing the maximum number of marked points such that $T'$ is still stable. Let $I:= \{i_0,\ldots,i_\ell\}\subset\{1,\ldots,k\}$ be the indexing set of marked points remaining in $T'$. Define $\mathbf{w}_I\in\overline{\mathcal{M}}_{0,|I|}$ to be the unique element that corresponds to the stable labelled tree $T'$. Let $W_{k,I} = \pi_{k,I}^{-1}(\mathbf{w}_I)$. It is clear that $\overline{\mathcal{M}}_{0,T} = W_{k,I}$. The fact that $W_{k,I}$ represents the class $\beta_{k,I}$ follows from the same reasoning as the proof of Lemma \ref{zki}.
\end{proof}

We will now use the description of $H_{*}(\overline{\mathcal{M}}_{0,k})$ above to construct Gromov-Witten invariants with homological constraints on the domain. As with constraints on the image as considered in Section \ref{homconstruction}, there are two different ways one can build invariants with domain constraints. Let $\beta\in H_{*}(\overline{\mathcal{M}}_{0,k})$. The space $X=\overline{\mathcal{M}}_{0,k}(A,J)$ carries a forgetful map $\pi_0:\overline{\mathcal{M}}_{0,k}(A,J)\rightarrow\overline{\mathcal{M}}_{0,k}$ which takes a stable map to its underlying stabilized domain. We wish to consider elements of $X$ whose (stabilized) domain is constrained. Define the Gromov-Witten invariant
\begin{equation}
GW_{A,k}^M(\beta) = (\pi_0)_{*}([X]_{\mathcal{K}}^{vir})\cdot\beta.
\end{equation}
The atlas $\mathcal{K}$ has dimension $d:=2n+2c_1(A)+2k-6$, so the invariant is $0$ unless $2k-6-\deg\beta=d$.

Alternatively, as with homological constraints on image, we could directly constrain the space $X$. Let $Y_{\beta_{k,I}}\subset \overline{\mathcal{M}}_{0,k}$ be the $C^1$ SS submanifold in Lemma \ref{755} representing the homology class $\beta_{k,I}$. Consider the subset of $X$
\begin{equation}\label{xbeta}
X_{\beta_{k,I}} = \{[\Sigma,\mathbf{z},f]\in\overline{\mathcal{M}}_{0,k}(A,J)~|~\pi_0([\Sigma,\mathbf{z},f])\in Y_{\beta_{k,I}}\}.
\end{equation}
If $d$ is the formal dimension of $X$, then $X_{\beta_{k,I}}$ has formal dimension $d-(2k-6-\deg\beta_{k,I})=0$. One can then construct a weak atlas $\mathcal{K}_{\beta_{k,I}}$ for $X_{\beta_{k,I}}$. We do this using Gromov Witten charts as described in Section \ref{gwcharts}. The domain $\widehat{U}$ carries a $C^1$ SS forgetful map $\pi_0$ to $\overline{\mathcal{M}}_{0,k}^{new}$ that takes an element to its underlying stabilized domain (see Proposition \ref{dm1}). We can define $\widehat{U}_{\beta_{k,I}}$ to be an open neighborhood of $(\vec{0},\mathbf{w}_0,\mathbf{z}_0,f_0)$ in
\begin{equation}\label{ubeta}
\widehat{U}_{\beta_{k,I}} := \left\{(\vec{e},[\Sigma,\mathbf{w},\mathbf{z},f])\in\widehat{U}~|~\pi_0(f)\in Y_{\beta_{k,I}}\right\}.
\end{equation}
To ensure that this is manifold of dimension $\dim (\widehat{U}) - (2k-6-\deg\beta_{k,I})$ we need that the forgetful map $\pi_0:\widehat{U}\rightarrow\overline{\mathcal{M}}_{0,k}^{new}$ is SS transverse to $Y_{\beta_{k,I}}$. The obstruction space $E$ is chosen large enough so that this is true for sufficiently small $\widehat{U}$. We then obtain a weak Kuranishi atlas, which has a well defined class $[X_{\beta_{k,I}}]_{\mathcal{K}}^{vir}\in \check{H}_{0}(X_{\beta_{k,I}};\mathbb{Q})$ which is a rational number. We expect that this is the same as the Gromov-Witten invariant $GW_{A,k}^M(\beta_{k,I})$. The next theorem says that this is true.
\begin{theorem}\label{homologyagree2}
The two approaches to defining Gromov-Witten invariants with domain constraints as described above agree. That is for all $\beta_{k,I}$
\[GW_{A,k}^M(\beta_{k,I}) := (\pi_0)_{*}([X]_{\mathcal{K}}^{vir})\cdot \beta_{k,I} = [X_{\beta_{k,I}}]_{\mathcal{K}_{\beta_{k,I}}}^{vir}\in\mathbb{Q}.\]
\end{theorem}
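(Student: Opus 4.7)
The plan is to mirror the proof of Theorem \ref{homologyagree}, with the forgetful map $\pi_0:\widehat{U}\to \overline{\mathcal{M}}_{0,k}^{new}$ playing the role of the evaluation map $ev_k$ and the $C^1$ SS submanifold $Y_{\beta_{k,I}}$ from Lemma \ref{755}(i) playing the role of the cycle representative $Z_c$. As in the proof of Theorem \ref{homologyagree}, the two ingredients needed are (i) an analogue of Lemma \ref{xcit} establishing that $\mathcal{K}_{\beta_{k,I}}$ is a $C^1$ SS transverse subatlas of an appropriate Kuranishi atlas $\mathcal{K}$ for $X$, and (ii) an application of Proposition \ref{c1tprop} to pass to compatible perturbed zero sets.

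For step (i), I would construct $\mathcal{K}$ by first choosing center points $\{[\Sigma_i,\mathbf{z}_i,f_i]\}\subset X_{\beta_{k,I}}$ whose footprints cover a neighborhood of $X_{\beta_{k,I}}$, and then completing to a cover of $X$ by additional center points lying outside a neighborhood of $X_{\beta_{k,I}}$. By construction (cf.\ (\ref{ubeta})), the basic chart for $\mathcal{K}_{\beta_{k,I}}$ centered at a point of $X_{\beta_{k,I}}$ is precisely the preimage under $\pi_0$ of $Y_{\beta_{k,I}}$ inside the corresponding basic chart for $\mathcal{K}$, and the same is true of sum charts. The forgetful map $\pi_0$ is $C^1$ SS by Proposition \ref{dm1}, commutes with the coordinate changes of Section \ref{gwcharts} (which forget the extra marked points $(\mathbf{w}_i)_{i\in J\setminus I}$), and is $\Gamma_I$-invariant (since the action permutes $\mathbf{w}_0$ and $\pi_0$ stabilizes with respect to $\mathbf{z}_0$ alone), so condition (\ref{evcoordinatechange}) of Definition \ref{tdefinition} is satisfied. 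Using Lemma \ref{755}(i), $Y_{\beta_{k,I}}=\pi_{k,I}^{-1}(\mathbf{y}_I)$ with $\mathbf{y}_I$ in the top smooth stratum of $\overline{\mathcal{M}}_{0,|I|}$, so by enlarging the obstruction spaces $E_i$ we may arrange that $\pi_{k,I}\circ\pi_0$ is SS transverse to $\mathbf{y}_I$, equivalently that $\pi_0$ is SS transverse to $Y_{\beta_{k,I}}$. By Lemma \ref{sstransverselemma}, the domains of $\mathcal{K}_{\beta_{k,I}}$ are then $C^1$ SS submanifolds cut out by $\pi_0$, and the remaining conditions of Definitions \ref{tdefinition} and \ref{c1tdef} follow from the construction.

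For step (ii), Proposition \ref{c1tprop} then produces compatible tamings, reductions, and perturbations $\nu,\nu_{\beta_{k,I}}$ such that $|\mathbf{Z}^{\nu_{\beta_{k,I}}}|_{\mathcal{H}}$ is a compact weighted branched submanifold of $|\mathbf{Z}^{\nu}|_{\mathcal{H}}$ with each branch cut out transversally by $\pi_0$, and $|\mathbf{Z}^{\nu_{\beta_{k,I}}}|_{\mathcal{H}}=\pi_0^{-1}(Y_{\beta_{k,I}})$. Since $[X_{\beta_{k,I}}]^{vir}_{\mathcal{K}_{\beta_{k,I}}}$ is constructed from the former while $(\pi_0)_*([X]_{\mathcal{K}}^{vir})\cdot\beta_{k,I}$ is computed by intersecting the image of the latter against the cycle $Y_{\beta_{k,I}}$ representing $\beta_{k,I}$, the two rational numbers agree branch by branch after accounting for the weights $\Lambda^{\nu}$ from Proposition \ref{wbm}. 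The independence of the particular atlases follows from cobordism invariance established in \cite{mwtop,mwfund,mwiso}, exactly as in the proof of Theorem \ref{homologyagree}.

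The main obstacle is verifying that the $C^1$ SS transverse subatlas framework applies despite the target $\overline{\mathcal{M}}_{0,k}^{new}$ being only $C^1$ SS and $Y_{\beta_{k,I}}$ being a $C^1$ SS (rather than smooth) submanifold. The key observation that makes this work is the one from Lemma \ref{755}(i): $Y_{\beta_{k,I}}$ is the SS-transverse preimage of a \emph{smooth} point $\mathbf{y}_I$ in the top stratum of $\overline{\mathcal{M}}_{0,|I|}$, so SS transversality of the composed forgetful map $\pi_{k,I}\circ\pi_0$ to the point $\mathbf{y}_I$ can be achieved by the same kind of obstruction bundle enlargement used for $ev_k$ in Lemma \ref{xcit}, and this reduces the problem to one with a smooth point constraint.
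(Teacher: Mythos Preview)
Your proposal is correct and follows essentially the same route as the paper: establish that $\mathcal{K}_{\beta_{k,I}}$ is a $C^1$ SS transverse subatlas of an atlas $\mathcal{K}$ on $X$ with $\pi_0$ as the evaluation map (the paper packages this as Lemma \ref{xbit}, the analogue of Lemma \ref{xcit}), then apply Proposition \ref{c1tprop} and conclude exactly as in Theorem \ref{homologyagree}. The only minor difference is in how you verify that the domains $U_I^{\beta_{k,I}}$ are $C^1$ SS submanifolds: you argue via SS transversality of the composite $\pi_{k,I}\circ\pi_0$ to the smooth point $\mathbf{y}_I$ (the first mechanism in Remark \ref{sssubrem}), whereas the paper's Lemma \ref{xbit} instead observes that these domains are locally modelled by $\mathbb{R}^k\times\mathbb{C}^{\underline{\ell}}\times\{0\}$ (the second mechanism in Remark \ref{sssubrem}); both are valid and lead to the same conclusion.
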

It suffices to consider the classes $\beta_{k,I}$ due to Lemma \ref{generate}. We prove Theorem \ref{homologyagree2} exactly as we proved Theorem \ref{homologyagree}. We first have the following lemma.
\begin{lemma}\label{xbit}
Let $\mathcal{K}_{\beta_{k,I}}$ be a Kuranishi atlas for
\[X_{\beta_{k,I}} :=\{[\Sigma,\mathbf{z},f]\in\overline{\mathcal{M}}_{0,k}(A,J)~|~ \pi_0([\Sigma,\mathbf{z},f]) \in Y_{\beta_{k,I}}\}.\] Then there is a $C^1$ SS Kuranishi atlas $\mathcal{K}$ for $X=\overline{\mathcal{M}}_{0,k}(A,J)$ such that $\mathcal{K}_{\beta_{k,I}}$ is a $C^1$ SS transverse subatlas of $\mathcal{K}$ with $\pi_0$ as the evaluation map $f$ required in Definition \ref{tdefinition}.
\end{lemma}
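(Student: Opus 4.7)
The plan is to mirror the proof of Lemma \ref{xcit} almost verbatim, replacing the evaluation map $ev_k:\widehat{U}\rightarrow M^k$ with the forgetful map $\pi_0:\widehat{U}\rightarrow\overline{\mathcal{M}}_{0,k}^{new}$ and the submanifold $Z_c\subset M^k$ with $Y_{\beta_{k,I}}\subset\overline{\mathcal{M}}_{0,k}^{new}$. First I would choose center points $\{[\Sigma_i,\mathbf{z}_i,f_i]\}\subset X_{\beta_{k,I}}$ whose footprints cover a neighborhood of $X_{\beta_{k,I}}$ in $X$, and then add additional center points $\{[\Sigma_j,\mathbf{z}_j,f_j]\}\subset X\setminus X_{\beta_{k,I}}$ whose footprints are small enough to lie outside a fixed neighborhood of $X_{\beta_{k,I}}$. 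For the first group, the basic chart for $\mathcal{K}_{\beta_{k,I}}$ at $[\Sigma_i,\mathbf{z}_i,f_i]$ is built as in (\ref{ubeta}) by intersecting the ambient Gromov--Witten chart $\widehat{U}_i$ from Section \ref{gwcharts} with $\pi_0^{-1}(Y_{\beta_{k,I}})$; for the second group we simply take Gromov--Witten charts for $X$ as in Section \ref{gwcharts}, with no companion chart in $\mathcal{K}_{\beta_{k,I}}$.

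The key analytic input is that, for each center point in $X_{\beta_{k,I}}$, the obstruction space $E_i$ and the map $\lambda_i$ can be enlarged so that the forgetful map $\pi_0:\widehat{U}_i\rightarrow\overline{\mathcal{M}}_{0,k}^{new}$ is SS transverse to $Y_{\beta_{k,I}}$ on all of $\widehat{U}_i$. This is precisely the hypothesis already required in the construction of $\mathcal{K}_{\beta_{k,I}}$ described after (\ref{ubeta}), and it is available because $\pi_0$ is $C^1$ SS by Proposition \ref{dm1} and $Y_{\beta_{k,I}}$ is a $C^1$ SS submanifold by Lemma \ref{755}(\ref{7551}); enlarging $E_i$ surjects onto the relevant normal directions at $[\Sigma_i,\mathbf{z}_i,f_i]$, and SS transversality is an open condition so it persists on a neighborhood. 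By Lemma \ref{sstransverselemma}, $U_{i,\beta_{k,I}}^{c} := \pi_0^{-1}(Y_{\beta_{k,I}})\cap U_i$ is then a $C^1$ SS submanifold of $U_i$ of codimension equal to the real codimension of $Y_{\beta_{k,I}}$, verifying the structural requirement of Definition \ref{c1tdef}. The same argument with $E_I = \prod_{i\in I}E_i$ handles the sum charts, and the transition data for $\mathcal{K}_{\beta_{k,I}}$ is by construction the restriction of the transition data of $\mathcal{K}$, which gives Condition (\ref{tcoordinatechange}) of Definition \ref{subatlasdeff} and Condition (\ref{evcoordinatechange}) of Definition \ref{tdefinition}.

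It then remains to check Conditions (iii) and (iv) of Definition \ref{tdefinition}. Condition (iii) is immediate from the transversality argument above with $N=\overline{\mathcal{M}}_{0,k}^{new}$, $C=Y_{\beta_{k,I}}$, and $f_I = \pi_0:U_I\rightarrow\overline{\mathcal{M}}_{0,k}^{new}$. For Condition (iv), the domains $U_j$ associated to center points in $X\setminus X_{\beta_{k,I}}$ can be shrunk so that their footprints, and hence (after possibly shrinking the Kuranishi neighborhood in the forgetful map direction) the images $\pi_0(U_j)\subset\overline{\mathcal{M}}_{0,k}^{new}$, lie outside a fixed $\varepsilon$-neighborhood of $Y_{\beta_{k,I}}$; this uses compactness of $X_{\beta_{k,I}}$ and $Y_{\beta_{k,I}}$ together with continuity of $\pi_0$. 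The main subtlety — and the only step that is not purely bookkeeping — is arranging the choice of obstruction spaces $E_i$ uniformly over the finite covering family so that SS transversality of $\pi_0$ to $Y_{\beta_{k,I}}$ holds simultaneously on every basic and sum chart; this is handled exactly as in \cite{notes, gluingme} by first enlarging each $E_i$ individually and then noting that the sum obstructions $E_I$ only further enlarge the transversality-producing direction.
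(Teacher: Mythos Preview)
Your proof follows the same strategy as the paper's: mirror the proof of Lemma~\ref{xcit}, replacing $ev_k$ and $Z_c$ by $\pi_0$ and $Y_{\beta_{k,I}}$. The paper's proof is in fact just a two-sentence reference back to Lemma~\ref{xcit}.

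There is, however, one point the paper singles out that you handle differently. You show that the domains $U_I^c=\pi_0^{-1}(Y_{\beta_{k,I}})$ are $C^1$ SS submanifolds by asserting SS transversality of $\pi_0$ and invoking Lemma~\ref{sstransverselemma}. But that lemma, as stated, requires the target $W$ to be a \emph{smooth} manifold, whereas here the target $\overline{\mathcal{M}}_{0,k}^{new}$ is only a $C^1$ SS manifold. The paper explicitly flags this as ``the only difference'' from Lemma~\ref{xcit}: rather than appealing to SS transversality, it observes that in this situation the domains are cut out transversally and are locally modelled on $\mathbb{R}^k\times\mathbb{C}^{\underline{\ell}}\times\{0\}$, hence are $C^1$ SS submanifolds of the \emph{second} type described in Remark~\ref{sssubrem} (as opposed to the first type, which is the SS-transverse case used in Lemma~\ref{xcit}). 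Your route could presumably be completed by proving an extension of Lemma~\ref{sstransverselemma} to $C^1$ SS targets and $C^1$ SS submanifolds $Z$, but as written you are citing the lemma outside its stated hypotheses.
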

\begin{proof}
The proof of this lemma is easy and follows from the same reasoning as the proof of Lemma \ref{xcit}. The only difference is that in Lemma \ref{xcit} the domains were cut out by a SS transverse evaluation map while in the present situation, the domains are cut out transversally and are locally modelled by $\mathbb{R}^k\times\mathbb{C}^{\underline{\ell}}\times\{0\}$. Thus, they are also $C^1$ SS submanifolds (see Remark \ref{sssubrem}) which is all that is required to apply Proposition \ref{c1tprop}.
\end{proof}
\begin{proof}[Proof of Theorem \ref{homologyagree2}]
Theorem \ref{homologyagree2} is proven exactly as Theorem \ref{homologyagree}, with Lemma \ref{xbit} being used in place of Lemma \ref{xcit}.
\end{proof}

Combining the definitions of this section and Section \ref{homconstruction} allows us to define the Gromov-Witten invariant
\begin{equation}\label{defofgw}
GW^M_{A,k}(c_1,\ldots,c_k;\beta):= (ev_k\times\pi_0)_{*}([X]^{vir})\cdot (c_1\times\cdots\times c_k\times\beta)
\end{equation}
where $c_i\in H_{*}(M), \beta\in H_{*}(\overline{\mathcal{M}}_{0,k}),$ and $\cdot$ denotes the intersection product in $M^k\times\overline{\mathcal{M}}_{0,k}$. In order to establish the Gromov-Witten axioms in Section \ref{gwaxioms} we will prove Theorem \ref{inttheorem} below, a statement that combines Theorems \ref{homologyagree} and \ref{homologyagree2}. It roughly says that all ways of cutting down the moduli space by some subset of the homological constraints and then pushing forward and intersecting will result in the same invariant. To be more precise, let $0\leq\ell\leq k$ and $Z_{c_1\times\cdots\times c_\ell}$ be a submanifold of $M^\ell$ representing the class $c_1\times\cdots\times c_\ell$. Let $Y_{\beta_{k,I}}$ be the submanifold of $\overline{\mathcal{M}}_{0,k}$ representing the class $\beta_{k,I}$ from Lemma \ref{755}. Define
\begin{equation}\label{M1}
\overline{\mathcal{M}}_{0,k}(A,J;c_1,\ldots,c_\ell) = \{[\Sigma,\mathbf{z},f]\in\overline{\mathcal{M}}_{0,k}(A,J)~|~ev_\ell(f)\in Z_{c_1\times\cdots\times c_\ell}\}
\end{equation}
\begin{equation}\label{M2}
\overline{\mathcal{M}}_{0,k}(A,J;c_1,\ldots,c_\ell;\beta_{k,I}) = \{[\Sigma,\mathbf{z},f]\in\overline{\mathcal{M}}_{0,k}(A,J)~|~ev_\ell(f)\in Z_{c_1\times\cdots\times c_\ell}, \pi_0(f)\in Y_{\beta_{k,I}}\}
\end{equation}
Here $ev_\ell$ denotes the evaluation map for the first $\ell$ marked points. Kuranishi atlases for these spaces can be built using the techniques described in this section and Section \ref{homconstruction}. Let $ev_{k-\ell}$ denote the evaluation map for the last $k-\ell$ marked points.
\begin{theorem}\label{inttheorem}
For any $0\leq\ell\leq k$,
\[GW^M_{A,k}(c_1,\ldots,c_k;\beta) = (ev_{k-\ell}\times\pi_0)_{*}\big(\big[\overline{\mathcal{M}}_{0,k}(A,J;c_1,\ldots,c_\ell)\big]^{vir}\big)\cdot(c_{\ell+1}\times\cdots\times c_k\times \beta)\]
\[GW^M_{A,k}(c_1,\ldots,c_k;\beta_{k,I}) = (ev_{k-\ell})_{*}\big(\big[\overline{\mathcal{M}}_{0,k}(A,J;c_1,\ldots,c_\ell;\beta_{k,I})\big]^{vir}\big)\cdot(c_{\ell+1}\times\cdots\times c_k).\]
In particular, in the case $\ell=k$,
\[GW^M_{A,k}(c_1,\ldots,c_k;\beta_{k,I}) = [\overline{\mathcal{M}}_{0,k}(A,J;c_1,\ldots,c_k;\beta_{k,I})]^{vir} \in \mathbb{Q}.\]
\end{theorem}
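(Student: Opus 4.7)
The plan is to iterate the argument used in Theorems \ref{homologyagree} and \ref{homologyagree2}, applying Proposition \ref{c1tprop} to transverse subatlases that impose the cut-down constraints. Starting from an atlas $\mathcal{K}$ on $X = \overline{\mathcal{M}}_{0,k}(A,J)$ built as in Section \ref{gwcharts} and a submanifold representative $Z_{c_1\times\cdots\times c_\ell}\subset M^\ell$ of $c_1\times\cdots\times c_\ell$, I would choose the obstruction spaces sufficiently large so that the $\ell$-marked-point evaluation map $ev_\ell$ is SS transverse to $Z_{c_1\times\cdots\times c_\ell}$ on every chart. The argument of Lemma \ref{xcit} then produces a $C^1$ SS transverse subatlas $\mathcal{K}_{c_1,\ldots,c_\ell}$ of $\mathcal{K}$ on $\overline{\mathcal{M}}_{0,k}(A,J;c_1,\ldots,c_\ell)$ with evaluation map $ev_\ell$ and constraining submanifold $Z_{c_1\times\cdots\times c_\ell}$; no essential change is needed to accommodate multiple simultaneous evaluation constraints.

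Next, I would apply Proposition \ref{c1tprop} to obtain compatible tamings, reductions, and perturbations such that the perturbed zero set $|\mathbf{Z}^{\nu_c}|_{\mathcal{H}}$ representing $[\overline{\mathcal{M}}_{0,k}(A,J;c_1,\ldots,c_\ell)]^{vir}$ is a compact weighted branched submanifold of $|\mathbf{Z}^\nu|_{\mathcal{H}}$ (representing $[X]^{vir}$), cut out branchwise transversally by $ev_\ell$. On each branch $Z_I$ of $|\mathbf{Z}^\nu|_{\mathcal{H}}$, the remaining map $ev_{k-\ell}\times\pi_0$ is smooth, and one can choose cycle representatives of $c_{\ell+1}\times\cdots\times c_k\times\beta$ transverse to its image. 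The standard branchwise identity ``pushforward commutes with intersection under transverse cutting down,'' weighted as in Proposition \ref{wbm} and assembled in the inverse limit of rational \v{C}ech homology as in the proof of Theorem \ref{theoremb}, then yields the first equation.

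For the second equation, I would repeat the argument incorporating the domain constraint $\beta_{k,I}$. Since $Y_{\beta_{k,I}}$ of Lemma \ref{755} is locally modelled on Example \ref{localc1ss}.\ref{localc1ss1} inside $\overline{\mathcal{M}}_{0,k}^{new}$ and $\pi_0$ is $C^1$ SS, the argument of Lemma \ref{xbit} produces a $C^1$ SS transverse subatlas $\mathcal{K}_{c_1,\ldots,c_\ell;\beta_{k,I}}$ of $\mathcal{K}_{c_1,\ldots,c_\ell}$ with combined evaluation map $ev_\ell\times\pi_0$ and constraining $C^1$ SS submanifold $Z_{c_1\times\cdots\times c_\ell}\times Y_{\beta_{k,I}}$. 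Applying Proposition \ref{c1tprop} once more and invoking the same branchwise intersection identity gives the second equation. The case $\ell=k$ specializes immediately: the virtual class then has dimension zero, so the ``intersection'' with the empty tuple of remaining constraints is simply the weighted count defining $[\overline{\mathcal{M}}_{0,k}(A,J;c_1,\ldots,c_k;\beta_{k,I})]^{vir}\in\mathbb{Q}$.

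The main obstacle will be verifying that Proposition \ref{c1tprop} can be applied consistently to a chain of nested transverse subatlases, i.e.\ that the perturbation produced by Lemma \ref{tsection} restricts correctly to every intermediate subatlas in turn. A cleaner equivalent approach is to view all constraints as defining a single combined $C^1$ SS transverse subatlas of $\mathcal{K}$ via the product evaluation map into $M^\ell\times\overline{\mathcal{M}}_{0,k}^{new}$, which reduces the proof to one application of Proposition \ref{c1tprop} but requires checking simultaneous SS transversality to the product submanifold---again an easy consequence of enlarging obstruction spaces.
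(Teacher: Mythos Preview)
Your proposal is correct and follows essentially the same approach as the paper, which simply states that the theorem is proved in the same way as Theorems \ref{homologyagree} and \ref{homologyagree2}. Your elaboration---building the constrained atlas as a $C^1$ SS transverse subatlas via the product evaluation map $ev_\ell\times\pi_0$ and applying Proposition \ref{c1tprop}---is exactly the intended argument, and your observation that one can either iterate or combine the constraints into a single transverse subatlas is a reasonable way to organize it.
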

\begin{proof}
This theorem is proved in the same way as Theorems \ref{homologyagree} and \ref{homologyagree2}.
\end{proof}

\begin{remark}[Non-SS domains]\label{nonsmooth}
It will be useful in Section \ref{gwaxioms} to use the cycles $Z_{k,I}$ defined in Lemma \ref{zki} to define the space $X_{\beta_{k,I}}$ defined by (\ref{xbeta}). When building an atlas $\mathcal{K}_{\beta_{k,I}}$ for $X_{\beta_{k,I}}$, the spaces $\widehat{U}_{\beta_{k,I}}$ as defined by (\ref{ubeta}) (from which the domains of $\mathcal{K}_{\beta_{k,I}}$ are formed) are no longer smooth manifolds, which we required in Definition \ref{chart}. However we would still like to use $Z_{k,I}$ for geometric considerations.

The domains $\widehat{U}_{\beta_{k,I}}$ are not smooth, but have a simple local description since they have local charts of the form
\[\left\{(x,z_1,\ldots,z_n)\in\mathbb{R}^k\times\mathbb{C}^n~|~z_j=0, j\in J\subset\{1,\ldots,n\}\right\}.\]
We call spaces with such local models \textbf{stratified pseudomanifolds}. Stratified pseudomanifolds have a natural stratification induced from the stratification of $\mathbb{R}^k\times\mathbb{C}^{\underline{n}}$ described in Example \ref{stratex}. However, they are obviously not manifolds but rather locally a finite union of manifolds.

The taming and reduction procedures carry through for stratified pseudomanifold domains because they hold in the much more general setting of weak filtered topological atlases. The construction of a perturbation section can also be done for stratified pseudomanifold domains by constructing the section inductively on strata. This is very similar to the construction of the perturbation section in Proposition \ref{c1section}, see Remark \ref{pseudoremark}.
\end{remark}

\section{Proof of Gromov-Witten axioms}\label{gwaxioms}
The section is dedicated to the proof of Theorem \ref{axiomthm}.

\subsection{Proof of Effective, Symmetry, Grading, Homology, and Zero axioms}
We first prove the \textit{(Effective), (Symmetry), (Grading), (Homology),} and \textit{(Zero)} axioms, which follow easily from definitions.

The \textit{(Effective)} axiom follows from the fact that when $\omega(A)<0$, the moduli space $X=\overline{\mathcal{M}}_{0,k}(A,J)$ is empty and hence $[X]_{\mathcal{K}}^{vir}=0$. \hfill{$\Diamond$}\\

The \textit{(Symmetry)} axioms follows from the fact that the symmetric group $S_k$ acts on the moduli space $X$ by permuting the marked points and the evaluation map $ev_k:\overline{\mathcal{M}}_{0,k}(A,J)\rightarrow M^k$ and the forgetful map $\pi_0:\overline{\mathcal{M}}_{0,k}(A,J)\rightarrow\overline{\mathcal{M}}_{0,k}$ are equivariant under this action. The same is true for a weak Kuranishi atlas on $X$. \hfill{$\Diamond$}\\

The \textit{(Grading)} axiom follows from the dimension formula for $X$. \hfill{$\Diamond$}\\

The \textit{(Homology)} axiom follows by defining
\[\sigma_{A,k} = (ev_k\times\pi_0)_{*}([X]_{\mathcal{K}}^{vir}).\]
As defined in (\ref{defofgw}),
\[GW^M_{A,k}(c_1,\ldots,c_k;\beta) = (c_1\times\cdots\times c_k\times\beta)\cdot\sigma_{A,k}.\]\hfill{$\Diamond$}

For the \textit{(Zero)} axiom, we first note that the moduli space $X=\overline{\mathcal{M}}_{0,k}(0,J)$ is naturally diffeomorphic to the manifold $M\times\overline{\mathcal{M}}_{0,k}$ because every $J$-holomorphic sphere in homology class $0$ is constant. Note that this is a manifold and is cut out transversally because every constant $J$-holomorphic sphere is regular by \cite[Lemma 6.7.6]{JHOL}. Therefore, we can take a Kuranishi atlas $\mathcal{K}$ for $X$ with one chart $\mathbf{K}$ with domain $M\times\overline{\mathcal{M}}_{0,k}$, trivial obstruction bundle, no isotopy group, and identity as the footprint map. Then $[X]_\mathcal{K}^{vir}$ is the usual fundamental class of a manifold. The evaluation map
\[ev_k\times\pi_0:\overline{\mathcal{M}}_{0,k}(0,J)=M^k\times\overline{\mathcal{M}}_{0,k}\rightarrow M^k\times\overline{\mathcal{M}}_{0,k}\]
is given by $(ev_k\times\pi_0)(x,C) = (x,\ldots,x,C)$ where $x\in M,C\in\overline{\mathcal{M}}_{0,k}$. Therefore,
\begin{align*}
    GW^M_{0,k}(c_1,\ldots,c_k;\beta) &= \big((ev_k)_*([X]_{\mathcal{K}}^{vir})\times(\pi_0)_*([X]_{\mathcal{K}}^{vir})\big)\cdot\big((c_1\times\cdots\times c_k)\times\beta\big)\\
    &=\big(\Delta\cdot(c_1\times\cdots\times c_k)\big)\times([\overline{\mathcal{M}}_{0,k}]\cdot\beta)
\end{align*}
where $\Delta$ denotes the diagonal in $M^k$. The number $[\overline{\mathcal{M}}_{0,k}]\cdot\beta$ is nonzero only if $\deg(\beta)=0$. In the case $\beta=$[pt],
\begin{align*}
    GW^M_{0,k}(c_1,\ldots,c_k;[pt]) &=\big(\Delta\cdot(c_1\times\cdots\times c_k)\big)\\
    &=c_1\cap\cdots\cap c_k.
\end{align*}
This proves the \textit{(Zero)} axiom.\hfill{$\Diamond$}

\subsection{Proof of Fundamental class, Divisor, and Splitting axioms}
We now prove the \textit{(Fundamental class), (Divisor),} and \textit{(Splitting)} which have more geometric content and require greater care than the previously considered axioms.

For the \textit{(Fundamental class)} axiom, by Lemma \ref{generate}, it suffices to prove the axiom for $\beta=\beta_{k,I}$. Therefore, we must prove that
\begin{equation}\label{fundcase0}
GW^M_{A,k}(c_1,\ldots,c_{k-1},[M];\beta_{k,I}) = GW^M_{A,k-1}(c_1,\ldots,c_{k-1};(\pi_{0,k})_*\beta_{k,I})
\end{equation}
where
\[\pi_{0,k}:\overline{\mathcal{M}}_{0,k}\rightarrow\overline{\mathcal{M}}_{0,k-1}\]
is the map that forgets the last marked point. We then break the proof into three cases.

\noindent\underline{Case 1:} $k\in I$ and $|I|\geq 4$.

In this case, by Lemma \ref{755}, (\ref{fundcase0}) becomes
\begin{equation}\label{fundcase1}
GW^M_{A,k}(c_1,\ldots,c_{k-1},[M];\beta_{k,I}) = GW^M_{A,k-1}(c_1,\ldots,c_{k-1};\beta_{k-1,I\setminus\{k\}}).
\end{equation}

We first consider the case when $k \in I$ and $|I|\geq 4$. Write $I=\{i_0,\ldots,i_{\ell},k\}\subset\{1,\ldots,k\}$. By Theorem \ref{inttheorem} in the case $\ell=k$, we can construct both invariants in (\ref{fundcase1}) by constraining the moduli space using all homological constraints so that the relevant atlases have dimension $0$. That is, using the notation of (\ref{M2})
\[GW^M_{A,k}(c_1,\ldots,c_{k-1},[M];\beta_{k,I}) = \big[\overline{\mathcal{M}}_{0,k}(A,J;c_1,\ldots,c_{k-1},[M];\beta_{k,I})\big]^{vir}_{\mathcal{K}_k}\]
\[GW^M_{A,k-1}(c_1,\ldots,c_{k-1};\beta_{k-1,I\setminus\{k\}}) = \big[\overline{\mathcal{M}}_{0,k-1}(A,J;c_1,\ldots,c_{k-1};\beta_{k-1,I\setminus\{k\}})\big]^{vir}_{\mathcal{K}_{k-1}},\]
where we define these moduli spaces by picking a submanifold $Z_{c_1\times\cdots\times c_{k-1}}$ to represent the class $c_1\times\cdots\times c_{k-1}$ and choose $Z_{c_1\times\cdots\times c_{k-1}}\times M$ to represent $c_1\times\cdots\times c_{k-1}\times [M]$. We can also use the $C^1$ SS submanifolds $Y_{k,I},Y_{k-1,I\setminus\{k\}}$ from Lemma \ref{755} to represent $\beta_{k,I},\beta_{k-1,I\setminus\{k\}}$. The $0$-dimensional atlases $\mathcal{K}_k$ and $\mathcal{K}_{k-1}$ can be built as described in Sections \ref{homconstruction} and \ref{domainconstruction}.

Consider the following subset $V_{k-1}$ of the domain $U_{k-1}$ of a chart in $\mathcal{K}_{k-1}$. Elements of $V_{k-1}$ are those satisfying the following conditions:
\begin{enumerate}[(1)]
\item The domains are smooth.
\item Fix a parametrization such that the marked points $z_{i_0},\ldots,z_{i_{k-1}}$ are at $0,\ldots,\ell\in S^2$ (this is possible by the definition of $\beta_{k-1,I\setminus\{k\}}$). No other marked point in $\mathbf{z}$ or $\mathbf{w}$ is at $\ell+1\in S^2$.
\end{enumerate}
Both of these conditions are codimension two phenomena in $U_{k-1}$, so by Proposition \ref{codim2}, we can build the perturbation functor $\nu_{k-1}$ for $\mathcal{K}_{k-1}$ such that the perturbed zero set lies in the domains $V_{k-1}$.

We can build the atlases $\mathcal{K}_{k-1}$ and $\mathcal{K}_k$ by using the same slicing manifold. Then the domains of $\mathcal{K}_k$ carry a forgetful map $\pi_{0,k}$ to the domains of $\mathcal{K}_{k-1}$ which forgets the $k$th marked point. This map is onto and injective over the preimage $\pi_{0,k}^{-1}(V_{k-1})$. The proofs of these facts follows by the same reasoning as the proof of Lemma \ref{755}. We can tame $\mathcal{K}_{k-1}$ and use preimages under $\pi_{0,k}$ to tame $\mathcal{K}_k$. We can also define the perturbation functor $\nu_k$ for $\mathcal{K}_k$ by pulling pack $\nu_{k-1}$ via $\pi_{0,k}$. Transversality of $\nu_{k-1}$ implies transversality for $\nu_k$  and by construction, the forgetful map induces a homeomorphism from the perturbed zero set in $\mathcal{K}_k$ to the perturbed zero set in $\mathcal{K}_{k-1}$ by $\pi_{0,k}$. Therefore \[\big[\overline{\mathcal{M}}_{0,k}(A,J;c_1,\ldots,c_{k-1},[M];\beta_{k,I})\big]^{vir}_{\mathcal{K}_k} = \big[\overline{\mathcal{M}}_{0,k-1}(A,J;c_1,\ldots,c_{k-1};\beta_{k-1,I\setminus\{k\}})\big]^{vir}_{\mathcal{K}_{k-1}}\]
which proves (\ref{fundcase1}).\\

\noindent\underline{Case 2:} $k\notin I$.

In this case, by Lemma \ref{755}, (\ref{fundcase0}) becomes
\begin{equation}\label{fundcase2}
GW^M_{A,k}(c_1,\ldots,c_{k-1},[M];\beta_{k,I}) = 0.
\end{equation}

We will use Theorem \ref{inttheorem} to build the invariant $GW^M_{A,k}(c_1,\ldots,c_{k-1},[M];\beta_{k,I})$ using a $0$-dimensional atlas $\mathcal{K}_k$ on the space $\overline{\mathcal{M}}_{0,k}(c_1,\ldots,c_{k-1},[M];\beta_{k,I})$. Now consider the space $\displaystyle \overline{\mathcal{M}}_{0,k-1}(c_1,\ldots,c_{k-1},\beta_{k-1,I})$. This space carries an atlas $\mathcal{K}_{k-1}$ that can be made by using the same choice of slicing manifold as $\mathcal{K}_k$. The map $\pi_{0,k}$ that forgets the last marked point induces a functor $\pi_{0,k}:\mathcal{K}_k\rightarrow\mathcal{K}_{k-1}$. Over the top stratum of $\mathcal{K}_{k-1}$ this map has smooth fibres diffeomorphic to $S^2$ because the last marked point is unconstrained. Hence me may tame $\mathcal{K}_{k-1}$ and use the inverse images under $\pi_{0,k}$ to tame $\mathcal{K}_k$. Note that $\mathcal{K}_{k-1}$ is of negative dimension (its domains have dimension two less than those for $\mathcal{K}_{k}$ and they have the same obstruction bundle) so we can construct the perturbation section $\nu_{k-1}$ for $\mathcal{K}_{k-1}$ so that $s_{\mathcal{K}_{k-1}}+\nu_{k-1}$ has no zeros. We can then define the perturbation section $\nu_k$ by pulling back $\nu_{k-1}$ via $\pi_{0,k}$. Hence this perturbed zero set is also empty and $[\overline{\mathcal{M}}_{0,k}(c_1,\ldots,c_{k-1},[M];\beta_{k,I})]^{vir}_{\mathcal{K}_k}=0$. This proves $(\ref{fundcase2})$ when $k\notin I$.\\

\noindent\underline{Case 3:} $|I|=3$.
In this case, $\beta_{k,I}=[\overline{\mathcal{M}}_{0,3}]=pt$ and hence (\ref{fundcase0}) becomes
\begin{equation}\label{fundcase3}
GW^M_{A,k}(c_1,\ldots,c_{k-1},[M];\beta_{k,I}) =  0,
\end{equation}
just as in Case 2. Note that we can still define $\overline{\mathcal{M}}_{0,2}(A,J)$ even though $\overline{\mathcal{M}}_{0,2}$ is not defined. The fact that $(\ref{fundcase3})$ holds follows by the same reasoning as Case 2.$\hfill{\Diamond}$\\

For the \textit{(Divisor)} axiom, by Lemmas \ref{755} and \ref{generate} it suffices to prove
\begin{equation}\label{divisorcase}
GW^M_{A,k}(c_1,\ldots,c_k;\beta_{k,I}) = (c_k\cdot A)~GW^M_{A,k-1}(c_1,\ldots,c_{k-1};\beta_{k-1,I})
\end{equation}
where $k\notin I$. By Theorem \ref{inttheorem} in the case $\ell=k-1$, we can construct the invariants in (\ref{divisorcase}) in the following manner (using the notation of (\ref{M2}))
\[GW^M_{A,k-1}(c_1,\ldots,c_{k-1};\beta_{k-1,I}) = [\overline{\mathcal{M}}_{0,k-1}(A,J;c_1,\ldots,c_{k-1};\beta_{k-1,I})]^{vir}_{\mathcal{K}_{k-1}}\]
\[GW^M_{A,k}(c_1,\ldots,c_k;\beta_{k,I}) = ev_{*}\big([\overline{\mathcal{M}}_{0,k}(A,J;c_1,\ldots,c_{k-1};\beta_{k,I})]^{vir}_{\mathcal{K}_k}\big)\cdot c_k.\]
Here $ev$ denotes the evaluation map for the $k$th marked point. We can use the same submanifold representative of the class $c_1\times\cdots\times c_{k-1}$ for both moduli spaces and the submanifolds $Y_{k,I},Y_{k-1,I}$ from Lemma \ref{755} to represent the classes $\beta_{k,I},\beta_{k-1,I}$. The atlases $\mathcal{K}_k$ and $\mathcal{K}_{k-1}$ are $2$ and $0$-dimensional respectively. Therefore, to prove the \textit{(Divisor)} axiom it suffices to show
\begin{align}
\nonumber &ev_{*}\big([\overline{\mathcal{M}}_{0,k}(A,J;c_1,\ldots,c_{k-1};\beta_{k,I})]^{vir}_{\mathcal{K}_k}\big)\\ \label{eachA}=&\big([\overline{\mathcal{M}}_{0,k-1}(A,J;c_1,\ldots,c_{k-1};\beta_{k-1,I})]^{vir}_{\mathcal{K}_{k-1}}\big)A \in H_2(M)
\end{align}
where here we are regarding $[\overline{\mathcal{M}}_{0,k-1}(A,J;c_1,\ldots,c_{k-1};\beta_{k-1,I})]^{vir}_{\mathcal{K}_{k-1}}\in\mathbb{Q}$. Additionally, by Proposition \ref{codim2} we can build the virtual class $[\overline{\mathcal{M}}_{0,k-1}(A,J;c_1,\ldots,c_{k-1};\beta_{k-1,I})]^{vir}_{\mathcal{K}_{k-1}}$ from a perturbed zero set, all of whose elements have smooth domain.

Consider the forgetful map $\pi_{0,k}:\overline{\mathcal{M}}_{0,k}\rightarrow \overline{\mathcal{M}}_{0,k-1}$ that forgets the $k$th marked point. This map restricts to a map $\pi_{0,k}:Y_{k,I}\rightarrow Y_{k-1,I}$ that behaves like the universal curve. That is for $[C]\in Y_{k-1,I}$, $\pi_{0,k}^{-1}([C])$ can be identified with the curve $[C]$; the last marked point in $Y_{k,I}$ is unconstrained. In the same way, we have a forgetful functor on Kuranishi atlases $\pi_{0,k}:|\mathcal{K}_k|\rightarrow|\mathcal{K}_{k-1}|$. Once again, the last marked point is unconstrained, so the fibre over any element of $|\mathcal{K}_{k-1}|$ is exactly the domain of that element. In other words $\mathcal{K}_k$ is constructed in the same way as $\mathcal{K}_{k-1}$ but with an additional unconstrained marked point $z_k$. Therefore, the perturbed zero set used to construct $[\overline{\mathcal{M}}_{0,k-1}(A,J;c_1,\ldots,c_{k-1};\beta_{k-1,I})]^{vir}_{\mathcal{K}_{k-1}}$ is a finite number of weighted points and for each point $[C]$ in the perturbed zero set we get a two dimensional zero set in $\mathcal{K}_{k}$ that is the domain of $[C]$ and has the same weight as $[C]$. By construction $[C]$ is smooth and hence the two dimensional zero set is cut out transversally. Thus, the pushforward of this zero set by the evaluation map $ev$ is the image of the element $[C]$ and so is in the class $A$. This proves (\ref{eachA}) and hence the \textit{(Divisor)} axiom. $\hfill\Diamond$\\

For the \textit{(Splitting)} axiom let us examine the product map
\[\phi_S:\overline{\mathcal{M}}_{0,k_0+1}\times\overline{\mathcal{M}}_{0,k_1+1}\rightarrow\overline{\mathcal{M}}_{0,k}\]
that appears in the \textit{(Splitting)} axiom. Let $I_0,I_1\subset\{1,\ldots,k\}$ be disjoint subset such that $|I_0|\geq 2$ and $|I_1|\geq 2$. Let $\mathcal{S}(I_0,I_1)$ denote the set of all splittings $\mathcal{S}=(\mathcal{S}_0,\mathcal{S}_1)$ of the index set $\{1,\ldots,k\}$ such that $I_0\subset\mathcal{S}_0$ and $I_1\subset \mathcal{S}_1$. Given a splitting
\[\mathcal{S}=(\mathcal{S}_0,\mathcal{S}_1)\in\mathcal{S}(I_0,I_1),\]
let $k_i:=|\mathcal{S}_i|$. Let
\[\sigma_0:\{1,\ldots,k_0\}\rightarrow\{1,\ldots,k\},\qquad \sigma_1:\{2,\ldots,k_1+1\}\rightarrow\{1,\ldots,k\}\]
be the unique order preserving injections such that im $\sigma_i=\mathcal{S}_i$. Define subsets $I_{\mathcal{S}_i}\subset\{1,\ldots,k_i\}$
\[I_{\mathcal{S}_0}:=\sigma_0^{-1}(I)\cup\{k_0+1\},\qquad I_{\mathcal{S}_1}:=\{1\}\cup\sigma_1^{-1}(I).\]
The following lemma is adapted from \cite[Lemma 7.5.9]{JHOL}. Its proof is exactly the same as the proof in \cite{JHOL} and is an easy consequence of the definition of $\phi_S$.
\begin{lemma}\cite[Lemma 7.5.9]{JHOL}\label{759}
Let $Z_{k,I}$ be the cycle representing the homology class $\beta_{k,I}$ as described in Lemma \ref{zki}. Then
\[Z_{k,I} = \bigcup_{\mathcal{S}\in\mathcal{S}(I_0,I_1)}\phi_S(Z_{k_0+1,I_{\mathcal{S}_0}}\times Z_{k_1+1,I_{\mathcal{S}_1}})\]
and hence
\[\beta_{k,I} = \sum_{\mathcal{S}\in\mathcal{S}(I_0,I_1)}{\phi_S}_{*}(\beta_{k_0+1,I_{\mathcal{S}_0}}\otimes\beta_{k_0+1,I_{\mathcal{S}_0}}).\]
\end{lemma}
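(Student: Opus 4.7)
The plan is to follow the proof in \cite[Lemma 7.5.9]{JHOL}: first establish the set-theoretic identity describing $Z_{k,I}$ as a union, then pass to homology by observing that the pieces in the union overlap only in a lower-dimensional subset.

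For the set-theoretic identity $Z_{k,I} = \bigcup_{\mathcal{S}\in\mathcal{S}(I_0,I_1)}\phi_S(Z_{k_0+1,I_{\mathcal{S}_0}}\times Z_{k_1+1,I_{\mathcal{S}_1}})$, I will unpack both sides. Recall that $\mathbf{z}_I$ is the chain of spheres with $|I|-2$ components carrying the marked points of $I$ in the specified linear order, so any splitting $\mathcal{S}$ with $I_0\subset\mathcal{S}_0$ and $I_1\subset\mathcal{S}_1$ corresponds to cutting the chain $\mathbf{z}_I$ at a node that separates the marked points of $I_0$ from those of $I_1$. For the inclusion $\supseteq$, given $[\Sigma_i,\mathbf{z}_i]\in Z_{k_i+1,I_{\mathcal{S}_i}}$, the $I_{\mathcal{S}_i}$-stabilization of $[\Sigma_i,\mathbf{z}_i]$ is $\mathbf{z}_{I_{\mathcal{S}_i}}$, which is exactly the corresponding half of $\mathbf{z}_I$ together with the extra marked point corresponding to the would-be node. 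Gluing along this marked point via $\phi_S$ and then performing the $I$-stabilization of the result recovers $\mathbf{z}_I$, so $\phi_S([\Sigma_0,\mathbf{z}_0],[\Sigma_1,\mathbf{z}_1])\in Z_{k,I}$. For the inclusion $\subseteq$, given $[\Sigma,\mathbf{z}]\in Z_{k,I}$, the $I$-stabilization is $\mathbf{z}_I$, so $\Sigma$ has a distinguished node that maps under stabilization to the unique node of $\mathbf{z}_I$ separating $I_0$ from $I_1$; cutting $\Sigma$ along this node produces the desired factorization and determines a unique splitting $\mathcal{S}\in\mathcal{S}(I_0,I_1)$.

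To pass from the set-theoretic identity to the homology statement, I will observe that the generic locus of $\phi_S(Z_{k_0+1,I_{\mathcal{S}_0}}\times Z_{k_1+1,I_{\mathcal{S}_1}})$ consists of curves whose \emph{unique} node separating $I_0$ from $I_1$ is the one determined by $\mathcal{S}$. Since distinct splittings determine distinct separating nodes, the pairwise intersections of the pieces in the union lie in a codimension-at-least-two subset (curves with two or more separating nodes). Moreover $\phi_S$ is injective on the top stratum of each product, and each factor $Z_{k_i+1,I_{\mathcal{S}_i}}$ represents $\beta_{k_i+1,I_{\mathcal{S}_i}}$ by Lemma \ref{zki}. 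Combining these facts, the homology class of the union is the sum of classes of the pieces, giving $\beta_{k,I} = \sum_{\mathcal{S}}{\phi_S}_*(\beta_{k_0+1,I_{\mathcal{S}_0}}\otimes\beta_{k_1+1,I_{\mathcal{S}_1}})$.

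The main obstacle will be the combinatorial bookkeeping: verifying that $I_{\mathcal{S}_0}=\sigma_0^{-1}(I)\cup\{k_0+1\}$ and $I_{\mathcal{S}_1}=\{1\}\cup\sigma_1^{-1}(I)$ index precisely the subchains of $\mathbf{z}_I$ on either side of the chosen separating node, and that the identification prescribed by $\phi_S$ of the last marked point of the first factor with the first marked point of the second factor matches the roles of $k_0+1\in I_{\mathcal{S}_0}$ and $1\in I_{\mathcal{S}_1}$ so as to reproduce the global chain $\mathbf{z}_I$. This is essentially a direct consequence of the definition of $\phi_S$, but it is the step that needs to be checked carefully to confirm that every element of $Z_{k,I}$ is produced exactly once (up to the lower-dimensional overlap described above).
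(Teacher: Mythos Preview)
Your proposal is correct and follows essentially the same approach as the paper: the paper does not give an independent proof but simply states that ``its proof is exactly the same as the proof in \cite{JHOL} and is an easy consequence of the definition of $\phi_S$,'' which is precisely the route you outline. Your sketch of the set-theoretic decomposition and the codimension-two overlap argument is the standard \cite{JHOL} argument the paper defers to.
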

Using Lemma \ref{generate} we can rephrase the \textit{(Splitting)} axiom as the following.
\begin{align}\label{splitting}
&GW^M_{A,k}(c_1,\ldots,c_k;\beta_{k,I})\\
\nonumber=&\varepsilon(S,c)\sum_{\substack{A_0+A_1\\=A}}\sum_{\nu,\mu}GW^M_{A_0,k_0+1}(\{c_i\}_{i\in S_0},e_\nu;\beta_{k_0+1,I_{\mathcal{S}_0}})g^{\nu\mu}GW^M_{A_1,k_1+1}(e_\mu,\{c_j\}_{j\in S_1};\beta_{k_1+1,I_{\mathcal{S}_1}}).
\end{align}
We can use Theorem \ref{inttheorem} to build the invariants of $(\ref{splitting})$ by constraining the moduli spaces using all homological constraints so that the relevant moduli spaces have dimension $0$ and further we can use the submanifolds $Z_{k,I}$ from Lemma \ref{759}. That is, more explicitly
\begin{align*}
GW^M_{A,k}(c_1,\ldots,c_k;\beta_{k,I})&=\big[\overline{\mathcal{M}}_{0,k}(A,J;c_1,\ldots,c_k;Z_{k,I})\big]^{vir}_{\mathcal{K}}\\
GW^M_{A_0,k_0+1}(\{c_i\}_{i\in S_0},e_\nu;\beta_{k_0+1,I_{\mathcal{S}_0}})&=\big[\overline{\mathcal{M}}_{0,k}(A_0,J;\{c_i\}_{i\in S_0},e_\nu;Z_{k_0+1,I_{\mathcal{S}_0}})\big]^{vir}_{\mathcal{K}_0}\\
GW^M_{A_1,k_1+1}(e_\mu,\{c_j\}_{j\in S_1};\beta_{k_1+1,I_{\mathcal{S}_1}})&=\big[\overline{\mathcal{M}}_{0,k}(A_1,J;e_\mu,\{c_i\}_{i\in S_1};Z_{k_1+1,I_{\mathcal{S}_1}})\big]^{vir}_{\mathcal{K}_1}.
\end{align*}

By the discussion of Remark \ref{nonsmooth}, although the use of the $Z_{k,I}$ create domains that are stratified pseudomanifolds, all necessary results still hold. The proof of the \textit{(Splitting)} axiom is then just as the proof of \textit{(Splitting)} axiom in \cite{JHOL}.\hfill$\Diamond$

\section{Constructing Perturbation Sections}\label{sections}
In the construction of the virtual fundamental class associated to a weak Kuranishi atlas, the most intricate step and the one that relies most on the structure of the atlas is the construction of the perturbation section. As such, the key step in many of the results of previous sections was the construction of a perturbation compatible with the variation under consideration. In this section we outline the construction of the perturbation section in its simplest form, smooth and in the absence of isotropy, following \cite{mwfund}. We then describe the necessary variations needed for the applications of this paper.

\subsection{Perturbation sections}\label{originalsection}
This section contains the basic construction on which subsequent ones will be based. We begin with some background and then make precise statements about the existence of a perturbation section. Finally, we outline its construction. There are only two places in the construction where smoothness is used and they are marked by ($\ast$) and ($\ast\ast$). These will be the places in which modifications for other constructions are required. It is also important to understand how the transversality of the perturbation section is established so that we can adapt to the relative case (Section \ref{tsections}).

Given a reduction $\mathcal{V}$ of a tame Kuranishi atlas $\mathcal{K}$ with admissible metric $d$, define $\delta_\mathcal{V}>0$ to be the maximal constant such that any $\delta<\delta_\mathcal{V}$ satisfies
\begin{equation}
B^I_{2\delta}(V_I) \sqsubset U_I\qquad \forall I\in\mathcal{I}_{\mathcal{K}}
\end{equation}
\begin{equation}\label{reddisjoint}
B_{2\delta}^I(\pi_{\mathcal{K}}(\overline{V_I}))\cap B^I_{2\delta}(\pi_{\mathcal{K}}(\overline{V_J}))\neq\emptyset\quad\Rightarrow\quad I\subset J\textnormal{ or }J\subset I.
\end{equation}
Here $B_\delta^I$ denotes a $\delta$-metric neighborhood in $U_I$. Then for $\delta<\delta_{\mathcal{V}}$, we can define a decreasing sequence
\[V_I^k := B^I_{2^{-k}\delta}(V_I)\sqsubset U_I,\qquad \textnormal{for }k\geq0.\]
These define nested reductions $\mathcal{V}^k:=\{V_I^k\}\sqsupset \mathcal{V}^{k+1}$. Keeping with the notation of \cite{mwfund}, in the absence of isotropy the projection $\rho_{IJ}:\widetilde{U}_{IJ}\rightarrow U_{IJ}$ is invertible and we define
\[\phi_{IJ}=\rho^{-1}_{IJ}.\]
The sets
\[N_{JI}^k:= V_J^k\cap\pi_{\mathcal{K}}^{-1}(\pi_{\mathcal{K}}(V_I^k)) = V_J^k\cap\phi_{IJ}(V^k_I\cap U_{IJ})\]
will play an important role in the construction of the perturbation. We abbreviate
\[N_J^k := \bigcup_{I\subsetneq J}N_{JI}^k\subset V_J^k\]
and call $N_J^{|J|}$ the \textbf{core} of $V_J^{|J|}$, since it is the part of the set on which there is some compatibility required between $\nu_J$ and $\nu_I$ for $I\subsetneq J$. When $|J|=k+1$ we will also need to consider \textbf{enlargements of the core} of the form $N_J^{k+\lambda}\supset N_J^{|J|}$, where $0<\lambda<1$.

Given nested reductions $\mathcal{C}\sqsubset \mathcal{V}$ of $(\mathcal{K},d)$ and $0<\delta<\delta_\mathcal{V}$ define
\begin{equation}\label{eta}
\eta_0:=(1-2^{-1/4})\delta,\qquad \eta_k := 2^{-k}\eta_0
\end{equation}
and
\[\sigma(\delta,\mathcal{V},\mathcal{C}):=\min_{J\in\mathcal{I}_\mathcal{K}}\inf\left\{\|s_J(x)\|~|~x\in\overline{V_J^{|J|}}\setminus\left(\widetilde{C}_J\cup\bigcup_{I\subsetneq J}B^J_{\eta_{|J|-\frac{1}{2}}}\left(N_{JI}^{|J|-\frac{1}{4}}\right)\right)\right\}\]
where
\[\widetilde{C}_J:=\bigcup_{J\subset K}\phi_{JK}^{-1}(C_K)\subset U_J.\]
It is proved in \cite{mwfund} that $\sigma(\delta,\mathcal{V},\mathcal{C})>0$. This quantity is a lower bound for $s_J$ away from the zero set and provides an upper bound for the permitted size of the perturbation.

The conditions that we need the perturbation section to satisfy are included in the following definition.
\begin{definition}\label{adaptedpert}
Given nested reductions $\mathcal{C}\sqsubset\mathcal{V}$ of a Kuranishi atlas $(\mathcal{K},d)$ and constants $0<\delta<\delta_\mathcal{V}$ and $0<\sigma\leq\sigma(\delta,\mathcal{V},\mathcal{C})$ we say that a perturbation $\nu:\mathbf{B}_{\mathcal{K}}|_{\mathcal{V}}\rightarrow\mathbf{E}_\mathcal{K}|_{\mathcal{V}}$ of $s_\mathcal{K}|_{\mathcal{V}}$ is $(\mathcal{V},\mathcal{C},\delta,\sigma)$-\textbf{adapted} if the following conditions hold for every $k=1,\ldots,\max_{I\in\mathcal{I}_\mathcal{K}}|I|$, where $\eta_k$ is as in (\ref{eta}).
\begin{enumerate}[(a)]
\item \label{conda} The perturbations are compatible on $\bigcup_{|I|\leq k}V_I^k$, that is
\[\nu_J|_{\widetilde{U}_{IJ}\cap V_J^k\cap\rho_{IJ}^{-1}(V_I^k)} = \widehat{\phi}_{IJ}\circ\nu_I\circ\rho_{IJ}|_{\widetilde{U}_{IJ}\cap V_J^k\cap \rho_{IJ}^{-1}(V_I^k)}\quad\forall I\subset J.\]
\item\label{condb} The perturbed sections are \textbf{transverse} in the sense that $(s_I|_{V_I^k}+\nu_I)\pitchfork 0$ for each $|I|\leq k$.
\item\label{condc} The perturbations are \textbf{strongly admissible} with radius $\eta_k$, that is for all $H\subsetneq I$ and such that $|I|\leq k$ we have
\[\nu_I\big(B_{\eta_k}^I(N_{IH}^k)\big)\subset\widehat{\phi}_{HI}(E_H).\]
In particular, this implies that there exists a neighborhood of the core such that any zero in this neighborhood is actually in the core itself.
\item\label{condd} The perturbed zero sets are controlled by $\mathcal{C}$ in the sense that $\pi_{\mathcal{K}}\big((s_I|_{V_I^k}+\nu_I)^{-1}(0)\big)\subset\pi_{\mathcal{K}}(\mathcal{C})$ for $|I|\leq k$. This is needed for compactness.
\item\label{conde} The perturbations are small enough to control the zero set in the sense that 
    \[\sup_{x\in V_I^k}\|\nu_I(x)\|<\sigma\]
    for $|I|\leq k$.
\end{enumerate}
\end{definition}

We first note that adapted perturbation are automatically admissible, precompact, and transverse in the sense of Definition \ref{perturbdef} (see \cite[Remark 3.3.2]{mwiso}). The zero set is cut out transversally by $(\ref{condb})$ and compact by $(\ref{condd})$ and $(\ref{conde})$. The main result, Proposition \ref{section} below, is that such perturbations always exist.
\begin{proposition}\cite[Proposition 7.3.7]{mwfund},\cite[Proposition 3.3.3]{mwiso}\label{section}
Let $\mathcal{K}$ be a Kuranishi atlas with nested reductions $\mathcal{C}\sqsubset\mathcal{V}$. For all $0<\delta<\delta_{\mathcal{V}}$ and $0<\sigma<\sigma(\delta,\mathcal{V},\mathcal{C})$ there exists a $(\mathcal{V},\mathcal{C},\delta,\sigma)$-\textit{adapted perturbation} $\nu$ of $s_\mathcal{K}|_\mathcal{V}$.
\end{proposition}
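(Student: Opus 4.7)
The plan is to build the perturbation $\nu=(\nu_I)_{I\in\mathcal{I}_\mathcal{K}}$ by induction on $k=|I|$. At stage $k$ I assume the $\nu_H$ have already been constructed for $|H|<k$ and satisfy (a)–(e), and then define $\nu_I$ for every $I$ with $|I|=k$ so that the extended collection continues to satisfy (a)–(e) up through size $k$. The base case $k=1$ is the standard Sard–Smale construction: for each basic chart $\mathbf{K}_i$, choose a smooth section $\nu_i\colon V_i\to E_i$ with $\sup\|\nu_i\|<\sigma$ generic enough to make $s_i+\nu_i$ transverse on $V_i^1$; condition (c) is vacuous here, and (d) follows from the definition of $\sigma(\delta,\mathcal{V},\mathcal{C})$, which gives a lower bound for $\|s_i\|$ outside $\pi_\mathcal{K}(\mathcal{C})$ that no perturbation of norm $<\sigma$ can cancel.

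For the inductive step, fix $|I|=k$. Compatibility (a) already dictates $\nu_I$ on the (slightly enlarged) core: on $\widetilde{U}_{HI}\cap V_I^k\cap\rho_{HI}^{-1}(V_H^k)$ it is forced to equal $\widehat{\phi}_{HI}\circ\nu_H\circ\rho_{HI}$. The inductive hypothesis together with the strong cocycle condition guarantees that these prescriptions for different $H\subsetneq I$ agree on overlaps $N_{IH}^k\cap N_{IH'}^k\subset N_{I,H\cap H'}^k$, using the tameness identities, so they glue to a well-defined smooth $\nu_I^{\mathrm{core}}$ on a neighborhood of $N_I^k$. I then interpolate to all of $V_I^k$ by selecting a system of smooth cutoff functions $(\beta_H)_{H\subsetneq I}$ with $\beta_H\equiv 1$ on a neighborhood of $N_{IH}^k$ and $\mathrm{supp}\,\beta_H\subset B_{\eta_k}^I(N_{IH}^k)$ (this is step $(\ast)$, where smoothness enters), and defining
\[\nu_I^0(x)\;:=\;\sum_{H\subsetneq I}\beta_H(x)\,\widehat{\phi}_{HI}\bigl(\nu_H(\rho_{HI}(x))\bigr),\]
with the convention that each summand vanishes off $\widetilde{U}_{HI}\cap\rho_{HI}^{-1}(V_H)$. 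The support condition yields strong admissibility (c), and near the core $\nu_I^0$ coincides with the prescribed value so (a) holds there.

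Finally I restore transversality on the bulk. Near the core, $s_I+\nu_I^0$ is automatically transverse: the index condition on coordinate changes makes $s_I$ send the normal directions to $\widetilde{U}_{HI}\subset U_I$ isomorphically onto a complement of $\widehat{\phi}_{HI}(E_H)$ in $E_I$, reducing transversality of $s_I+\nu_I^0$ to that of $s_H+\nu_H$, which holds by induction. Letting $W\supset N_I^k$ be a slightly enlarged open neighborhood on which transversality is already achieved, I add a compactly supported smooth $\tau_I$ vanishing on $W$ and with $\|\tau_I\|<\sigma-\sup\|\nu_I^0\|$ chosen by Sard–Smale (step $(\ast\ast)$) so that $s_I+\nu_I^0+\tau_I$ is transverse on $V_I^k\setminus W$. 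Setting $\nu_I:=\nu_I^0+\tau_I$ gives all of (a)–(e); the zero-set control (d) follows once more from the lower bound $\sigma(\delta,\mathcal{V},\mathcal{C})$ on $\|s_I\|$ off the set $\widetilde{C}_I\cup\bigcup_{H\subsetneq I}B_{\eta_{k-1/2}}^I(N_{IH}^{k-1/4})$. The main obstacle is the combinatorial bookkeeping around the core: when $I$ has several incomparable proper subsets $H_1,\ldots,H_r$, the prescribed values from different $\nu_{H_j}$ must patch coherently on all iterated overlaps, and the cutoff-and-sum extension must preserve the stratified admissibility radii. Handling this cleanly requires processing the $H\subsetneq I$ in order of decreasing $|H|$ and repeatedly invoking the tameness identity $\underline{U}_{HJ}\cap\underline{U}_{HK}=\underline{U}_{H(J\cup K)}$ to control how the neighborhoods $B_{\eta_k}^I(N_{IH}^k)$ intersect; the Sard–Smale step is then comparatively routine because transversality is an open condition and the core is compact.
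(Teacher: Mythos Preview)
Your overall architecture---induction on $|I|$, forcing the values on the core by compatibility, then adding a small generic perturbation off a neighborhood of the core---matches the paper. But the explicit extension formula you write down is wrong, and this is exactly the ``combinatorial bookkeeping'' obstacle you flag at the end without actually resolving.

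The issue is over-counting. Your cutoff functions satisfy $\beta_H\equiv 1$ near $N_{IH}^k$, so at a point $x$ lying in several core pieces $N_{IH_1}^k,\ldots,N_{IH_r}^k$ (which by the reduction axiom are nested, say $H_1\subsetneq\cdots\subsetneq H_r\subsetneq I$) the sum $\nu_I^0(x)=\sum_H\beta_H(x)\,\widehat{\phi}_{HI}(\nu_H(\rho_{HI}(x)))$ adds up $r$ copies of the same prescribed vector, not one. So $\nu_I^0$ does \emph{not} restrict to $\nu_I^{\mathrm{core}}$ on the core, and conditions~(a) and~(c) both fail. A partition-of-unity fix ($\sum\beta_H=1$) would break~(a) for the opposite reason: no individual $\beta_H$ could be $\equiv 1$ on all of $N_{IH}^k$. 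Your final paragraph says one should ``process the $H\subsetneq I$ in order of decreasing $|H|$'', but the formula you actually give does no such thing.

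The paper's way around this is to decompose by obstruction-space components rather than by subsets: write $E_I=\prod_{j\in I}E_j$ and construct each component $\widetilde{\mu}_I^j\colon N_I^k\to E_j$ separately via a secondary induction. The point is that the strong admissibility condition forces $\widetilde{\mu}_I^j=0$ on $N_{IH}^k$ whenever $j\notin H$, so for fixed $j$ the extension problem decouples cleanly. The step you label~$(\ast)$ is not the choice of cutoffs; in the paper, $(\ast)$ is the local extension of $\mu_I^j$ off the submanifold $N_{IL}^k$ by choosing a submanifold chart and extending constant in the normal directions---this is where smoothness of the coordinate-change image $\widetilde{U}_{HI}\subset U_I$ genuinely enters. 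Your $(\ast\ast)$ does correspond to the paper's use of the Transversality Extension Theorem.
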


\textbf{Overview of the proof of Proposition \ref{section}}
The intricate aspects of this construction are carried out in \cite{mwfund} in the case of no isotropy. Then \cite{mwiso} shows how to adapt this construction in the presence of isotropy; this is discussed in Section \ref{atlases}. We will focus here on the case of no isotropy to better illuminate the construction. We will use the definitions and notation from Section \ref{atlases} as well as earlier in this section. One can also refer to \cite{notes} for a more detailed discussion than this paper, but less involved than \cite{mwfund, mwiso}.

The construction of the section is by an inductive process that constructs the required perturbations $\nu_I$. We will induct on $k=|I|$ and at the $k$th step construct functions $\nu_I:V_I^{|I|}\rightarrow E_I$ for all $|I|=k$ that, together with the $\nu|_{V_I^k}$ for $|I|<k$ obtained by restriction from earlier steps, satisfy conditions $(a)$-$(e)$ of Definition \ref{adaptedpert}. The key point is that because the different sets $V_I^{|I|}, |I|=k$ are disjoint by (\ref{reddisjoint}), at the $k$th inductive step, the functions $\nu_I$ can be constructed independently.

The inductive hypothesis is that suitable $\nu_I:V^{|I|}\rightarrow E_I$ have been found for $|I|\leq k$. We then need to construct $\nu_J$ for some $|J|=k+1$. We construct $\nu_J$ as a sum $\nu_J =\widetilde{\nu}_J+\nu_\pitchfork$ where
\begin{itemize}
\item $\widetilde{\nu}_J|_{N_J^{k+1}} = \mu_J|_{N_J^{k+1}}$ where $\mu_J:N_J^k\rightarrow E_J$ is defined on the enlarged core by using the compatibility conditions.
\item$\nu_\pitchfork$ is a final small perturbation chosen so as to achieve transversality.
\end{itemize}
Constructing an extension $\widetilde{\mu}_J$ of $\mu_J$ is the intricate portion of the procedure and $\widetilde{\nu}_J$ is obtained from $\widetilde{\mu}_J$ using a cutoff function. To construct $\widetilde{\mu}_J$, each component $\widetilde{\mu}_J^j:N_J^k\rightarrow E_j$, for $j\in J$, is constructed separately. To do this, it suffices to construct $\widetilde{\mu}_J^j$ on a neighborhood
\[W_J:=B_{\eta_k+\frac{1}{2}}^J(N_J^{k+\frac{1}{2}}).\]
This construction is done by another inductive process. These $\widetilde{\mu}_J^j$ are required to satisfy some vanishing and size conditions so that $(a)$-$(e)$ are satisfied.

At this point, the problem is further reduced to constructing extensions $\widetilde{\mu}_z$ near each point $z$ in a set $B'_L$ defined in \cite[Equation (7.3.27)]{mwfund}. In most cases this choice of extension is trivial, that is it can be chosen to be zero or is already given by the compatibility conditions.
\begin{itemize}[($\ast$)]
\item The only nontrivial extension that is required is when $z\in\overline{N_{JL}^{k+\frac{1}{2}}}$. Recall that $\overline{N_{JL}^{k+\frac{1}{2}}}$ is a compact subset of the smooth manifold $N_{JL}^k$. Thus, we can choose a small $r_z>0$ such that $B_{r_z}^J(z)$ lies in a submanifold chart for $N_{JL}^k$. Then $\widetilde{\mu}_z$ is chosen to extend $\mu_J^j|_{B_{r_z}^J(z)\cap N_{JL}^k}$ and be constant in the normal direction. This is one of the places where the smooth structure of $\mathcal{K}$ is used.
\end{itemize}
Here are some additional important remarks on the construction of $\mu_J$.
\begin{enumerate}[(A)]
\item The constants $\eta_k$ are chosen so that $W_J\cap N_J^k\sqsubset N_J^{k+\frac{1}{4}}$. Further, $B_{\eta_k+\frac{1}{2}}^J(N_{JI}^{k+\frac{3}{4}})\cap N_{JI}^k=N_{JI}^{k+\frac{1}{2}}$. Together with tameness, this gives
    \[s_J^{-1}(E_I)\cap B_{\eta_k+\frac{1}{2}}^J(N_{JI}^{k+\frac{3}{4}})\subset N_{JI}^{k+\frac{1}{2}}\]
    which means that at each point in $\overline{B_{\eta_k+\frac{1}{2}}^J(N_{JI}^{k+\frac{1}{2}})}\setminus N_{JI}^k$ at least one component of $(s_J^j)_{j\in J\setminus I}$ is nonzero. This gives control over the zero sets as in the remarks below. It ensure that zeros near the core are actually in the core itself.
\item The following strong admissibility condition holds: If $I\subsetneq J$ and $j\in J\setminus I$, then $\widetilde{\mu}_J^j=0$ on $B_{\eta_k+\frac{1}{2}}^J(N_{JI}^{k+\frac{1}{2}})\subset W_J$ and on $N_{JI}^k$.
\item The section $s_J+\widetilde{\nu}_J$ is transverse to $0$ on $B:=B_{\eta_k+\frac{1}{2}}^J(N_{JI}^{k+\frac{3}{4}})\subset W_J$. This holds because by (A) and (B), all zeros in $B$ must lie in the subset $N_J^{k+\frac{1}{2}}$; but here the perturbation is pulled back from $\nu_I, I\subsetneq J$, so the zeros are transverse by the inductive hypothesis. The set $B$ compactly contains the neighborhood $B':=B_{\eta_k+1}^J(N_J^{k+1})$ of the core $N_J^{|J|}$ on which compatibility requires $\widetilde{\mu}_J=\mu_J=\widetilde{\nu}_J$.
\item For any perturbation $\widetilde{\nu}_J$ with support in $W_J$, $\|\widetilde{\nu}_J\|<\sigma$, and satisfying admissibility condition (C), we have
\[V_J^{k+1}\cap(s_J+\widetilde{\nu}_J)^{-1}(0)\subset N_J^{k+\frac{1}{4}}\cup(\widetilde{C}_J\setminus\overline{W}_J).\]
\end{enumerate}
At this stage of the construction $(a), (c), (d),$ and $(e)$ hold, so we only need to choose $\nu_\pitchfork$ to achieve transversality condition $(b)$ while keeping $\nu_\pitchfork$ small so that $(d)$ is still satisfied. To do this we first choose a relatively open neighborhood $W$ of $\overline{V_J^{k+1}}\setminus B$ in $\overline{V_J^{k+1}}$ so that
\[(s_J+\widetilde{\nu}_J)^{-1}(0)\cap W\subset \overline{V_J^{k+1}}\cap \widetilde{C}_J,\quad \overline{B'}\cap W=\emptyset.\]
This is possible by condition (D) and the fact $B'\sqsubset B\subset W_J$. By (C), transversality holds in $B$ and hence outside the compact subset $\overline{V_J^{k+1}}\setminus B\subset W$. Hence there is an open precompact subset $P\sqsubset W$ so that transversality holds on $W\setminus P$.
\begin{itemize}[($\ast\ast$)]
\item We then appeal to the Transversality Extension Theorem of \cite{gp} to choose $\nu_\pitchfork$ to be a very small smooth function with support in $P$ and values in $E_J$ so that $s_J+\widetilde{\nu}_J+\nu_\pitchfork$ is transverse on all of $W$ and hence $V_J^{k+1}$. This is the other place where the smooth structure of $\mathcal{K}$ is required.
\end{itemize}
This completes the inductive step.

\subsection{$C^1$ SS sections}\label{sectionsc1}
In this section, we adapt the construction of Section \ref{originalsection} to $C^1$ SS Kuranishi atlases. (See Section \ref{c1ss} for more about $C^1$ SS atlases.)
\begin{proposition}\label{c1section}
Let $\mathcal{K}$ be a $C^1$ SS Kuranishi atlas with nested reductions $\mathcal{C}\sqsubset\mathcal{V}$. For all $0<\delta<\delta_{\mathcal{V}}$ and $0<\sigma<\sigma(\delta,\mathcal{V},\mathcal{C})$ there exists a $(\mathcal{V},\mathcal{C},\delta,\sigma)$-adapted $C^1$ SS perturbation $\nu$ of $s_\mathcal{K}|_\mathcal{V}$ such that
\begin{itemize}
\item $s_I|_{V_I}+\nu_I:V_I\rightarrow E_I$ is SS transverse to $0$, i.e. its restriction to each stratum is transverse to $0$.
\end{itemize}
\end{proposition}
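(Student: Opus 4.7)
The plan is to follow the iterative construction of Proposition \ref{section} outlined in Section \ref{originalsection} verbatim, modifying only the two steps marked $(\ast)$ and $(\ast\ast)$, which are precisely the places where smoothness of $\mathcal{K}$ enters. All other portions of the inductive procedure---the decomposition $\nu_J = \widetilde{\nu}_J + \nu_\pitchfork$ into a core part plus a final transversality perturbation, the cutoff functions controlling the support, the core extension via the compatibility conditions, and the verification of strong admissibility, precompactness, and the size bound---are topological or algebraic in nature and carry over unchanged to the $C^1$ SS setting.

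For step $(\ast)$, we must construct a $C^1$ SS extension $\widetilde{\mu}_z$ of $\mu_J^j|_{B_{r_z}^J(z)\cap N_{JL}^k}$ to a $U_J$-neighborhood. Since $\mathcal{K}$ is a $C^1$ SS atlas, the index condition forces $\widetilde{U}_{LJ}\subset U_J$ to be a $C^1$ SS submanifold (cf.\ Section \ref{c1ss}), and so $N_{JL}^k = V_J^k\cap\phi_{LJ}(V_L^k\cap U_{LJ})$ inherits the structure of a $C^1$ SS submanifold of $U_J$ in the sense of Remark \ref{sssubrem}. Shrinking $r_z$ if necessary, we may choose a $C^1$ SS submanifold chart around $z$, and define $\widetilde{\mu}_z$ by declaring it constant in the normal direction of this chart. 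The resulting extension is then $C^1$ SS, and the proof that it satisfies the required vanishing and size conditions goes through as in \cite{mwfund}.

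For step $(\ast\ast)$, we replace the smooth Transversality Extension Theorem of \cite{gp} by its $C^1$ SS analogue encoded in Lemma \ref{c1tran}: because SS transversality is generic and $C^1$-open (the openness follows from the fact that transversality of $C^1$ maps is open), one can find an arbitrarily $C^1$-small $C^1$ SS perturbation $\nu_\pitchfork$ supported in $P$ such that $s_J+\widetilde{\nu}_J+\nu_\pitchfork$ is SS transverse to $0$ on $V_J^{k+1}$. This is proved by induction over the finite stratification of $V_J^{k+1}$ from the deepest stratum upwards: on each stratum the standard smooth Transversality Extension Theorem produces a perturbation achieving transversality there, and this perturbation is extended to shallower strata as a $C^1$ SS function by declaring it constant in normal coordinates in $C^1$ SS submanifold charts. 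Subsequent perturbations on higher strata can be made sufficiently small so as not to destroy the transversality already achieved on deeper strata, since SS transversality is $C^1$-open on each stratum.

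The main obstacle is the stratified transversality construction in step $(\ast\ast)$: one must simultaneously arrange SS transversality on every stratum of $V_J^{k+1}$ while keeping the global perturbation $C^1$ SS, supported in $P$, strongly admissible, and small enough to preserve the size bound of Definition \ref{adaptedpert}$(e)$. The inductive control of sizes across strata, together with the fact that the core perturbation $\widetilde{\nu}_J$ is already $C^1$ SS transverse by the inductive hypothesis on $N_J^{k+\frac{1}{2}}$ (via pullback from the $\nu_I$, $I\subsetneq J$), makes this achievable. Once SS transversality is established, Lemma \ref{sstransverselemma} guarantees that the resulting perturbed zero set is a $C^1$ SS manifold, as required for the construction of the virtual fundamental cycle in Theorem \ref{c1thm}.
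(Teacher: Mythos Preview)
Your proposal is correct and aligns with the paper's approach at step $(\ast)$: both extend $\mu_J^j$ off the $C^1$ SS submanifold $N_{JL}^k$ by choosing local stratified normal coordinates and declaring the extension constant in those directions (the paper phrases this as choosing a $C^1$ stratum-preserving projection $p:B_{r_z}^J(z)\to N_{JL}^k$ that is smooth on each stratum and extending constant along its fibres).

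At step $(\ast\ast)$ your route diverges. You propose to obtain the $C^1$ SS Extension Theorem by an induction over the finite stratification: achieve smooth transversality on the deepest stratum, extend, then perturb on successively shallower strata by amounts small enough (in $C^1$) to preserve what was already achieved. This works, but it implicitly uses the fact that SS transversality on a deep stratum $S$ propagates to SS transversality on nearby points of adjacent shallower strata---a consequence of the local product form $\mathbb{R}^k\times\mathbb{C}^{\underline{n}}$ of the stratification, which guarantees $T_wS$ sits inside $T_xT$ for $x$ near $w$ in a shallower stratum $T$. The paper instead proves Lemma \ref{c1tran} directly: take a parametrized family $F:X\times S\to Y$ that is SS transverse, apply Sard's theorem to the smooth restriction $\pi_T:F^{-1}(Z)_T\to S$ on each stratum $T$ separately, and intersect the resulting full-measure sets to find a common regular value. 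The Extension Theorem then follows from this Transversality Theorem by the standard Guillemin--Pollack argument with a cutoff. The paper's route is cleaner in that it avoids the stratum-by-stratum extension bookkeeping and the propagation argument, while your route is more hands-on and makes the role of the local $C^1$ SS structure more explicit.
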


\begin{proof}[Proof of Proposition \ref{c1section}]
We use the same inductive process of the proof of Proposition \ref{section} and show that it can be done in the $C^1$ SS category with the additional property that it is SS transverse (see Definition \ref{sstransverse}). Note that $\widetilde{U}_{IJ} \subset s_J^{-1}(E_I)$ is a $C^1$ SS submanifold of $U_J$ by the discussion following Remark \ref{sssubrem}. As before, the section is constructed as a sum $\nu=\widetilde{\nu}+\nu_\pitchfork$. We first deal with the construction of $\widetilde{\nu}$. As described in the proof of Proposition \ref{section}, there are two analytic steps in the construction of $\widetilde{\nu}$, denoted by $(\ast)$ and $(\ast\ast)$. The first, $(\ast)$, comes in constructing extensions $\widetilde{\mu}_z$ when $z\in\overline{N_{JL}^{k+\frac{1}{2}}}$. In the proof of Proposition \ref{section}, we pick a small ball $B_{r_z}^J(z)$ such that $z$ is in a submanifold chart for $N_{JL}^k$, and then extend $\mu_J^j|_{B_{r_z}^J(z)\cap N_{JL}^k}$ to be constant in the normal directions. We will do the same thing in the $C^1$ SS category. Choose a projection $p:B_{r_z}^J(z)\rightarrow N_{JL}^K$ that is $C^1$, preserves strata, and is smooth when restricted to each stratum. As this is a local choice, it is clear that such a projection $p$ exists. Then extend $\mu_J^j|_{B_{r_z}^J(z)\cap N_{JL}^k}$ to be constant on the fibres of $p$. This defines a $C^1$ SS function which has the desired properties by the same reasoning as before.

The other step, $(\ast\ast)$, in the construction of $\nu$ is the final small smooth perturbation $\nu_\pitchfork$. To build $\nu_\pitchfork$ we appealed to the Transversality Extension Theorem of \cite{gp}. In the $C^1$ SS setting we can no longer directly apply this theorem because it relies on Sard's lemma, which when applied to a map $f:\mathbb{R}^n\rightarrow\mathbb{R}^m$ requires $C^k$ differentiability with $k\geq\max\{1,n-m+1\}$. Therefore, we need an appropriate analog in the $C^1$ SS category.

\begin{lemma}[$C^1$ SS Transversality Theorem]\label{c1tran}
Let $X$ be a $C^1$ SS manifold. Suppose $F:X\times S\rightarrow Y$ is a $C^1$ SS map, where $S,Y$ are smooth manifolds given the trivial stratification. Let $Z$ be a submanifold of $Y$. If $F$ is SS transverse to $Z$, then for almost every $s\in S$, $F(s,\cdot):X\rightarrow Y$ is SS transverse to $Z$.
\end{lemma}
\begin{proof}
The preimage $F^{-1}(Z)$ is a submanifold of $X\times S$. Moreover, since $F$ is SS transverse to $Z$, $F^{-1}(Z)$ carries a $C^1$ SS structure induced from that on $X\times S$. Let $\pi:X\times S\rightarrow S$ be the natural projection map. It is shown in \cite{gp} that if $s\in S$ is a regular value of the restricted projection $\pi:F^{-1}(Z)\rightarrow S$, then $F(s,\cdot)$ is transverse to $Z$. We can restrict the map $\pi$ to each stratum $T$ of $X$ to obtain smooth maps $\pi_T:F^{-1}(Z)_T\rightarrow S$. By Sard's theorem, almost every $s\in S$ is a regular value for $\pi_T$. Therefore, by taking a finite intersection, almost every $s\in S$ is a regular value of $\pi_T:F^{-1}(Z)_T\rightarrow S$ for all stratum $T$. Therefore, for almost every $s\in S$, $F(s,\cdot):X\rightarrow Y$ is SS transverse to $Z$.
\end{proof}

We can then use the $C^1$ SS Transversality Theorem to prove the following $C^1$ SS Extension Theorem, which is what is required for the construction of $\nu_\pitchfork$. Its proof, using Lemma \ref{c1tran}, is sketched below and taken virtually unchanged from \cite{gp}.

\begin{lemma}[$C^1$ SS Extension Theorem]
Let $X$ be a $C^1$ SS manifold. Let $C$ be a closed subset of $X$. Suppose $f:X\rightarrow Y$ is a $C^1$ SS map with $f$ SS transverse to $Z$ on $C$. Then there exists a $C^1$ SS map $g:X\rightarrow Y$ homotopic to $f$ such that $g$ is SS transverse to $Z$ and on a neighborhood of $C$ we have $f=g$.
\end{lemma}
\begin{proof}[Sketch of proof]
Let $F:X\times S\rightarrow Y$ parameterize small perturbations of the map $f$ where $S$ is taken to be a Euclidean ball in a large enough Euclidean space so that $F$ is SS transverse to $Z$. The $C^1$ SS Transversality Theorem implies that for almost every $s\in S$, the map $f_s(x):=F(x,s)$ is SS transverse to $Z$. Therefore, $f$ can always be homotoped to an SS transverse map. By using a cutoff function that is $0$ on a neighborhood of $C$ and $1$ on a larger neighborhood, we can ensure that this homotopy does not change $f$ along $C$.
\end{proof}

We can then apply the $C^1$ SS Extension Theorem to construct an arbitrarily small function $\nu_\pitchfork$ with the desired properties. This completes the construction of $\nu$ in the $C^1$ SS category.
\end{proof}

\begin{remark}\label{pseudoremark}
The key component of constructing a perturbation section for a $C^1$ SS atlas, as in the proof of Proposition \ref{c1section}, is to complete the inductive procedure of Section \ref{originalsection} compatible with the stratification. The final step is to achieve transversality with a small perturbation that is built on each strata. We note that both of these procedures can be done on \textit{stratified pesudomanifolds}, which were introduced in Remark \ref{nonsmooth}.
\end{remark}

\subsection{Transverse sections}\label{tsections}
\begin{proof}[Proof of Lemma \ref{tsection}]
We will use the notation in the proof of Proposition \ref{section} in Section \ref{originalsection}. Additionally, we will denote all of the subsets of $U_J^c$ involved in the construction of the perturbation section with a superscript ``$c$." For example,
\[N_{JI}^{c,k} := V_J^{c,k}\cap\pi_{\mathcal{K}_c}^{-1}(\pi_{\mathcal{K}_c}(V_I^{c,k})).\]
By the definition of a compatible transverse reduction, the core behaves well with respect to the transverse structure, that is
\[N_{JI}^k\cap U_J^c = N_{JI}^{c,k}.\]
We will use the same notation for metric balls. That is, for a subset $P\subset U_I^c$, we will denote $B^{c,I}_\delta(P)$ to denote the $\delta$-metric ball of $P$ in $U_I^c$. We can also view $P\subset U_I$ and we have the compatibility
\[B_\delta^I(P)\cap U_I^c = B_\delta^{c,I}(P)\subset U_I^c.\]
Using these facts as well as similar simple observations, we see that $\delta_{\mathcal{V}_c}\geq\delta_{\mathcal{V}}$ and $\sigma(\delta,\mathcal{V},\mathcal{C}) \leq \sigma(\delta,\mathcal{V}_c,\mathcal{C}_c)$. Using this, it is clear that the restriction of a $(\mathcal{V},\mathcal{C},\delta,\sigma)$-adapted perturbation $\nu$ to $\mathcal{V}_c$ satisfies conditions $(a), (c), (d)$ and $(e)$. Thus, we need to show that $\nu$ can be chosen so that its restriction to $\mathcal{V}_c$ satisfies condition $(b)$, that is we require $(s^c_I|_{V_I^{c,k}}+\nu_I|_{V_I^{c,k}})\pitchfork0$. Therefore, we will construct $\nu$ by the same inductive procedure as the proof of Proposition \ref{section} with the additional condition that
\begin{enumerate}[$(f)$]
\item $(s_I|_{V_I^{c,k}}+\nu_I|_{V_I^{c,k}})\pitchfork0$\textit{ for all }$|I|\leq k$.
\end{enumerate}
Assume inductively that suitable $\nu_I:V_I^{|I|}\rightarrow E_I$ have been found for $|I|\leq k$ satisfying conditions $(a)$-$(f)$. We need to construct $\nu_J$ for some $J$ with $|J|=k+1$. The construction of $\nu_J$ is done similar the proof of Proposition \ref{section} by constructing $\nu_J$ as a sum $\widetilde{\nu}_J+\widetilde{\nu}^c_\pitchfork + \nu_\pitchfork$, where now $\widetilde{\nu}^c_\pitchfork$ achieves transversality on $V_J^c$ and $\nu_\pitchfork$ achieves transversality on $V_J$.
The $\widetilde{\nu}_J$ is constructed exactly as in the proof of Proposition \ref{section}. However, the inductive hypothesis allows us to make stronger statements.
\begin{enumerate}[(A$'$)]
\item Remark (A) continues to hold and $s_J^c$ has the same components as $s_J$, so at each point of \mbox{$\overline{B_{\eta_k+\frac{1}{2}}^{c,J}(N_{JI}^{c,k+\frac{1}{2}})}\setminus N_{JI}^{c,k}$} there is at least one component of $(s_J^{c,j})_{j\in J\setminus I}$ that is nonzero.
\item The same strong admissability condition (B) for $\widetilde{\mu}_J^{c,j}$ holds.
\item In addition to $s_J+\widetilde{\nu}_J$ being transverse to $0$ on $B:=B_{\eta_k+\frac{1}{2}}^{J}(N_J^{k+\frac{3}{4}})$ as in remark (C), we also have that $s_J|_{V_J^{c,k}}+\widetilde{\nu}_J|_{V_J^{c,k}}$ is transverse to $0$ on $B^c:=B_{\eta_k+\frac{1}{2}}^{c,J}(N_J^{c,k+\frac{3}{4}})=B\cap V_J^{c,k}$. This holds because by (A$'$) and (B$'$), all zeros of $s_J|_{V_J^{c,k}}+\nu_J|_{V_J^{c,k}}$ must lie in the subset $N_J^{c,k+\frac{3}{4}}$ of $B^c$. But here, $\nu_J|_{V_J^{c,k}}$ is obtained by pulling back $\nu_J|_{V_J^{c,k}}, I\subsetneq J$, so the zeros are transverse by using condition $(e)$ of the inductive hypothesis. The set $B^c$ compactly contains the neighborhood $B'_c:=B_{\eta_k+1}^{c,J}(N_J^{c,k+1})$ of the core $N_J^{c,k+1}$ where compatibility conditions are required.
\item By the same reasoning as (D), $\|\widetilde{\nu}_J^c\|<\sigma$ and satisfies admissability condition (B), so
    \[V_J^{c,k+1}\cap(s_J+\widetilde{\nu}_J)^{-1}(0)\subset N_J^{c,k+\frac{1}{4}}\cup(\widetilde{C}_J^c\setminus\overline{W_J^c}).\]
\end{enumerate}
This defines $\widetilde{\nu}_J$ which satisfies conditions $(a), (c)$, and $(d)$. We need to achieve transversality conditions $(b)$ and $(f)$ by choosing $\nu_\pitchfork$ while keeping $\nu_\pitchfork$ small enough that $(d)$ remains true. Choose a relatively open neighborhood $W$ of $\overline{V_J^{k+1}}\setminus B$ in $\overline{V_J^{k+1}}$ such that
\[(s_J+\widetilde{\nu}_J)^{-1}(0)\cap W\subset \overline{V_J^{k+1}}\cap \widetilde{C}_J,\quad \overline{B'}\cap W=\emptyset,\]
\[(s_J^c+\widetilde{\nu}_J|_{V^{c,k+1}_J})^{-1}(0)\cap W\cap V^{c,k+1}_J\subset \overline{V_J^{c,k+1}}\cap\widetilde{C}_J^c,\quad \overline{B'_c}\cap W=\emptyset.\]
This can be done using the zero set conditions (D) and (D$'$) and the facts that $B'\sqsubset B\subset W_J, B'_c\sqsubset B^c\subset W_J^c$. By (C), transversality condition $(b)$ holds in $B$ and hence outside the compact subset $\overline{V^{k+1}_J}\setminus B\subset W$. By (C$'$), transversality condition $(e)$ holds in $B^c=B\cap U_J^c$ and hence outside the compact set $\overline{V^{c,k+1}_J}\setminus B^c$. Hence there is an open precompact subset $P\sqsubset W$ such that condition $(b)$ holds on $W\setminus P$ and condition $(f)$ holds on $(W\setminus P)\cap V^{c,k+1}_J$. We can apply the Transversality Extension Theorem of \cite{gp} to choose $\nu_\pitchfork^c$ a very small function on $V_J^{c,k+1}$ with support in $P\cap V_J^{c,k+1}$ and values in $E_J$ such that $s_J|_{V_J^{c,k+1}}+\widetilde{\nu}_J|_{V_J^{c,k+1}} + \nu_\pitchfork^c$ is transverse to $0$ on $W\cap V_J^{c,k+1}$ and hence all of $V_J^{c,k+1}$. Thus, condition $(f)$ is satisfied. Next, use a cutoff function to extend $\nu_\pitchfork^c$ to a small smooth function $\widetilde{\nu}^c_\pitchfork$ on all of $V_J^{k+1}$. The transversality of $s_J|_{V_J^{c,k+1}}+\widetilde{\nu}_J|_{V_J^{c,k+1}} + \nu_\pitchfork^c$ on $V_J^{c,k+1}$ clearly also implies the transversality of $s_J+\widetilde{\nu}_J+\widetilde{\nu}^c_\pitchfork$ on $V_J^{c,k+1}$. Transversality is an open condition, so $s_J+\widetilde{\nu}_J+\widetilde{\nu}^c_\pitchfork$ is also transverse on some $\varepsilon$ neighborhood $B_\varepsilon(V_J^{c,k+1})$ of $V_J^{c,k+1}$ in $V_J^{k+1}$. Finally, we can apply the Transversality Extension Theorem to choose $\nu_\pitchfork$ a very small smooth function with support in $P\setminus B_{\frac{\varepsilon}{2}}(V_J^{c,k+1})$ and values in $E_J$ such that $s_J+\widetilde{\nu}_J+\widetilde{\nu}^c_\pitchfork + \nu_\pitchfork$ is transverse on $W$ and hence all of $V_J^{k+1}$. Condition $(f)$ continues to be satisfied and $\widetilde{\nu}_\pitchfork$ and $\nu_\pitchfork$ can be chosen arbitrarily small so condition $(d)$ remains satisfied. This completes the construction of $\nu$.
\end{proof}

One can combine the methods of this and the previous section to produce the following lemma.
\begin{lemma}\label{c1transverse}
Let $\mathcal{K}_c$ be a tame $C^1$ SS transverse subatlas of a tame $C^1$ SS atlas $\mathcal{K}$.  Let $\mathcal{C}\sqsubset\mathcal{V}$ be nested reductions for $\mathcal{K}$ and $\mathcal{C}_c\sqsubset\mathcal{V}_c$ be nested reductions for $\mathcal{K}_c$ such that $\mathcal{C},\mathcal{C}_c$ and $\mathcal{V},\mathcal{V}_c$ are compatible transverse reductions. Let $0<\delta<\delta_{\mathcal{V}_c}$ and $0<\sigma<\sigma(\delta,\mathcal{V},\mathcal{C})$. Then there exists a $(\mathcal{V},\mathcal{C},\delta,\sigma)$-adapted $C^1$ SS perturbation $\nu$ of $s_{\mathcal{K}}|_{\mathcal{V}}$ such that $\nu_c := \nu|_{\mathcal{V}_c}$ is a $(\mathcal{V}_c,\mathcal{C}_c,\delta,\sigma)$- $C^1$ SS adapted perturbation of $s^c_{\mathcal{K}_c}|_{\mathcal{V}_c}$. Moreover, $\nu$ and $\nu_c$ satisfy the additional transversality property.
\begin{itemize}
\item $s_I|_{\mathcal{V}_I}+\nu_I:V_I\rightarrow E_I$ is SS transverse to $0$.
\item $s^c_I|_{\mathcal{V}_I}+\nu^c_I:V_I^c\rightarrow E_I$ is SS transverse to $0$.
\end{itemize}
\end{lemma}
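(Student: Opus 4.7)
The plan is to merge the inductive construction in the proof of Lemma \ref{tsection} with the $C^1$ SS modifications in the proof of Proposition \ref{c1section}. Concretely, we induct on $k=|I|$ and at the $(k{+}1)$st stage construct each $\nu_J$ (with $|J|=k{+}1$) as a sum $\nu_J = \widetilde{\nu}_J + \widetilde{\nu}^c_{\pitchfork} + \nu_{\pitchfork}$, exactly as in the proof of Lemma \ref{tsection}, but arranging that every piece is $C^1$ SS and that the final transverse sum is SS transverse both on $V_J$ and on $V_J^c$.

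First I would build $\widetilde{\nu}_J$ from the enlarged core by extending the pullbacks $\mu_J^j$ as in Section \ref{originalsection}. The only analytic input in that construction is the local extension step $(*)$ at points $z\in \overline{N_{JL}^{k+1/2}}$; there I would replace the smooth submanifold projection by a $C^1$ SS projection $p\colon B^J_{r_z}(z)\to N_{JL}^k$ as in the proof of Proposition \ref{c1section}, and extend $\mu_J^j|_{B^J_{r_z}(z)\cap N_{JL}^k}$ to be constant on the fibres of $p$. Because $\mathcal{K}_c$ is a $C^1$ SS transverse subatlas, the domains $U_I^c$ are $C^1$ SS submanifolds cut out by a $C^1$ SS evaluation map; the cores and metric balls behave well under intersection with $U_I^c$, so $\widetilde{\nu}_J$ is $C^1$ SS and the compatibility/admissibility/size conclusions (A)$'$--(D)$'$ used in the proof of Lemma \ref{tsection} carry over verbatim.

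Next I would perform the two successive transverse corrections, but use Lemma \ref{c1tran} and its extension-theorem corollary in place of the classical Transversality Extension Theorem of \cite{gp}. First, on $V_J^{c,k+1}$, apply the $C^1$ SS Extension Theorem to produce an arbitrarily small $C^1$ SS function $\nu_{\pitchfork}^c$ supported in a precompact $P\cap V_J^{c,k+1}\sqsubset W\cap V_J^{c,k+1}$ with values in $E_J$, so that $s_J|_{V_J^{c,k+1}}+\widetilde{\nu}_J|_{V_J^{c,k+1}}+\nu_{\pitchfork}^c$ is SS transverse to $0$; this secures condition $(f)$ of the proof of Lemma \ref{tsection}. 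A $C^1$ SS cutoff extends $\nu_{\pitchfork}^c$ to a small $C^1$ SS function $\widetilde{\nu}_{\pitchfork}^c$ on $V_J^{k+1}$, and since SS transversality is open on a $C^1$ SS manifold, $s_J+\widetilde{\nu}_J+\widetilde{\nu}_{\pitchfork}^c$ remains SS transverse on some neighborhood $B_\varepsilon(V_J^{c,k+1})$. Then, on the complement $V_J^{k+1}\setminus B_{\varepsilon/2}(V_J^{c,k+1})$, apply the $C^1$ SS Extension Theorem a second time to produce an arbitrarily small $C^1$ SS $\nu_{\pitchfork}$ supported in $P\setminus B_{\varepsilon/2}(V_J^{c,k+1})$ achieving SS transversality on all of $V_J^{k+1}$, without disturbing transversality on $V_J^{c,k+1}$.

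The step I expect to be the main obstacle is the bookkeeping to make the two SS transversality corrections independent: the perturbation $\widetilde{\nu}_{\pitchfork}^c$ is built intrinsically on the $C^1$ SS submanifold $V_J^{c,k+1}$ (whose stratification is induced from $V_J^{k+1}$), while $\nu_{\pitchfork}$ must be built on $V_J^{k+1}$ in a way that (i) does not touch a neighborhood of $V_J^{c,k+1}$ and (ii) still satisfies the admissibility, compactness, and size conditions $(a),(c),(d),(e)$. The first is handled by the support condition $P\setminus B_{\varepsilon/2}(V_J^{c,k+1})$; the second by choosing both $\widetilde{\nu}_{\pitchfork}^c$ and $\nu_{\pitchfork}$ so small that $\|\widetilde{\nu}_J+\widetilde{\nu}_{\pitchfork}^c+\nu_{\pitchfork}\|<\sigma$ and the perturbed zero sets stay in $\pi_\mathcal{K}(\mathcal{C})$ and $\pi_{\mathcal{K}_c}(\mathcal{C}_c)$ respectively, which is possible because by remark (D) of Section \ref{originalsection} the zero sets already lie in the core plus a controlled region prior to the corrections. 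Once this is arranged, the inductive step closes and the resulting $\nu,\nu_c$ are the required simultaneously adapted $C^1$ SS SS transverse perturbations.
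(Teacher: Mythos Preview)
Your proposal is correct and is exactly the approach of the paper: the paper's proof is the single sentence ``The proof of this lemma combines the proofs of Lemma \ref{tsection} and Proposition \ref{c1section} above,'' and what you have written is precisely that combination spelled out in detail.
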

\begin{proof}
The proof of this lemma combines the proofs of Lemma \ref{tsection} and Proposition \ref{c1section} above.
\end{proof}

\subsection{Sections in the complement of a divisor}\label{complementsections}
The following Proposition is extremely useful and roughly states that the perturbed zero set of a $0$-dimensional atlas can be chosen to avoid subsets of codimension at least $2$. For the definitions regarding $C^1$ SS atlases, see Section \ref{c1ss}.
\begin{proposition}\label{codim2}
Let $\mathcal{K}$ be an oriented, $0$-dimensional, weak, effective $C^1$ SS Kuranishi atlas on a compact metrizable stratified space $(X,\mathcal{T})$. Then $\mathcal{K}$ determines a virtual fundamental class $[X]_{\mathcal{K}}^{vir}\in \check{H}_0(X;\mathbb{Q})$ and one can choose a perturbation section so that the perturbed zero set $|\mathbf{Z}_\nu|_{\mathcal{H}}$ lies the realization of the top stratum $|\mathcal{K}^{top}|$.
\end{proposition}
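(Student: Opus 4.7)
The plan is to reduce the statement to the existence of a $C^1$ SS transverse perturbation (Proposition \ref{c1section}), together with a dimension count that exploits the key fact that in a $C^1$ SS manifold the lower strata have \emph{real} codimension at least two. First, the existence of the virtual fundamental class $[X]_{\mathcal{K}}^{vir}\in\check{H}_0(X;\mathbb{Q})$ is immediate from Theorem \ref{c1thm}, so the content of the proposition is the statement about the location of the perturbed zero set.

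Start by applying Lemmas \ref{ttaming} and \ref{treduction} (or rather just the standard taming and reduction results, Propositions \ref{taming} and \ref{reduction}) to obtain a tame shrinking $\mathcal{K}$ together with nested reductions $\mathcal{C}\sqsubset\mathcal{V}$. Then invoke Proposition \ref{c1section} to build a $(\mathcal{V},\mathcal{C},\delta,\sigma)$-adapted $C^1$ SS perturbation $\nu$ of $s_{\mathcal{K}}|_{\mathcal{V}}$ with the property that for every $I\in\mathcal{I}_{\mathcal{K}}$ the restriction $s_I|_{V_I}+\nu_I\colon V_I\to E_I$ is SS transverse to $0$, i.e.\ its restriction to every stratum of $V_I$ is transverse to $0$.

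The key observation is a dimension count on lower strata. Because the atlas is $0$-dimensional, $\dim U_I=\dim E_I$ for every $I$. The $C^1$ SS charts are modeled on open subsets of $\mathbb{R}^k\times\mathbb{C}^{\underline{n}}$ with the stratification of Example \ref{stratex}, so each non-top stratum $U_{I,T}\subset U_I$ has real codimension at least two (each vanishing complex coordinate $a_i=0$ contributes real codimension $2$). Hence for every non-top stratum $T$,
\[
\dim U_{I,T}\;\leq\;\dim U_I-2\;=\;\dim E_I-2\;<\;\dim E_I.
\]
SS transversality of $s_I|_{V_I}+\nu_I$ at a point $x\in U_{I,T}\cap(s_I|_{V_I}+\nu_I)^{-1}(0)$ would require the stratum differential $d_x^T(s_I+\nu_I)\colon T_xU_{I,T}\to E_I$ to be surjective, which is impossible by the above dimension inequality. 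Therefore
\[
(s_I|_{V_I}+\nu_I)^{-1}(0)\;\subset\;U_{I,\mathrm{top}}\qquad\text{for every }I\in\mathcal{I}_{\mathcal{K}}.
\]

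It remains to assemble these local statements. Since the coordinate changes $\rho_{IJ}$, $\widehat{\phi}_{IJ}$ and the section functor $s_{\mathcal{K}}$ are $C^1$ SS and therefore preserve strata, the same is true of the perturbation functor $\nu$, so the full subcategory $\mathbf{Z}^{\nu}$ sits inside the ``top stratum part'' of $\mathbf{B}_{\mathcal{K}}|_{\mathcal{V}}^{\setminus\Gamma}$. Passing to the realization and the maximal Hausdorff quotient as in Proposition \ref{wbm}, one concludes that $|\mathbf{Z}^\nu|_{\mathcal{H}}\subset|\mathcal{K}^{top}|$, as desired. The only step that needs to be verified with any care is the last one, namely checking that the natural map from $|\mathbf{Z}^\nu|_{\mathcal{H}}$ into $|\mathcal{K}|$ indeed lands in the realization of the full subcategory on the top stratum; this follows from the fact that the morphisms of $\mathbf{B}_{\mathcal{K}}|_{\mathcal{V}}^{\setminus\Gamma}$ (coming from the $C^1$ SS coordinate changes $\rho_{IJ}$) are stratum preserving, so they identify top-stratum points only with top-stratum points.
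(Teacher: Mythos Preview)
Your argument is correct and is exactly the natural route: invoke Proposition \ref{c1section} to obtain an SS transverse perturbation, then use the dimension count that lower strata in a $C^1$ SS chart have real codimension at least two, so on a $0$-dimensional atlas no SS transverse zero can lie in a lower stratum. The paper's own proof is simply a citation to \cite[Proposition 3.5.7]{notes}, and your written-out argument is precisely the argument one expects that reference to contain, using the machinery (in particular Proposition \ref{c1section}) that this paper develops.
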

\begin{proof}
The proof of Proposition \ref{codim2} is exactly as the proof of \cite[Proposition 3.5.7]{notes}.
\end{proof}

\bibliographystyle{amsalpha}
\bibliography{research}

\end{document}